\theoremstyle{plain}
\newtheorem{thm}{Theorem}[section]
\newtheorem{cor}[thm]{Corollary}
\newtheorem{lem}[thm]{Lemma}
\newtheorem{prop}[thm]{Proposition}
\theoremstyle{definition}
\newtheorem{defn}[thm]{Definition}
\newtheorem{remark}[thm]{Remark}
\newtheorem*{dank}{Acknowledgments}
\numberwithin{equation}{section}
\newcommand\calA{\mathcal{A}}            
            \newcommand\calO{\mathcal{O}}
\newcommand\calF{\mathcal{F}}            \newcommand\calS{\mathcal{S}}
\newcommand\calG{\mathcal{G}}            
            \newcommand\calU{\mathcal{U}}
            \newcommand\calV{\mathcal{V}}
            \newcommand\calW{\mathcal{W}}
\newcommand\calK{\mathcal{K}}            
\newcommand\calL{\mathcal{L}}
\newcommand\id{\mathrm{id}}
\newcommand{\GL}[1]{\mathrm{GL}_{#1}}
\newcommand{\PR}[1]{\mathbb{P}^{#1}}
\newcommand{\FF}[1]{\mathbb{F}_{#1}} \newcommand{\Flag}{\mathrm{Flag}}
\newcommand{\OFlag}{\mathrm{Flag}^\perp}
\newcommand{\AFlag}{\mathrm{Flag}^{\perp,F,V}}
\newcommand{\fk}{\mathbb{F}}
\newcommand{\Dieu}{\mathbb{D}}
\newcommand{\Dieui}{\mathbb{D}_\infty}
\newcommand{\too}[1]{\overset{#1}{\to}}
\newcommand{\Adm}{\mathrm{Adm}(\mu)}
\newcommand{\Admn}{\mathrm{Adm}(\mu)^{(0)}}
\newcommand{\KR}[1]{\calA_{I,#1}}
\newcommand{\ES}{\mathbf{ES}}
\newcommand{\Wfin}{W_{\mathrm{final}}}
\newcommand{\pipe}{\mathrel{|}}
\newcommand{\pair}[2]{\left< #1,#2 \right>}
\newcommand{\paird}{\left<\cdot,\cdot\right>}
\DeclareMathOperator{\Spec}{Spec}
\DeclareMathOperator{\im}{im}
\DeclareMathOperator{\rk}{rank}
\begin{document}
\author{Philipp Hartwig}
\address{Universität Duisburg-Essen\\
Institut für experimentelle Mathematik\\
Ellernstr. 29\\
45326 Essen\\
Germany}
\email{philipp.hartwig@stud.uni-due.de}
\title[moduli spaces of abelian varieties of dimension 3]{On the reduction of the Siegel moduli space of abelian varieties of dimension 3 
with iwahori level structure}
\begin{abstract}
	We study the moduli space of abelian threefolds with Iwahori 
	level structure in positive characteristic. We explicitly determine the 
	fibers of the canonical projection to the moduli space of principally polarized 
	abelian varieties and draw conclusions about the relationship between the 
	Ekedahl-Oort, the Kottwitz-Rapoport and the Newton stratification on these 
	spaces.
\end{abstract}
\maketitle

\section{Introduction}
Fix a prime $p$, an integer $g\geq 1$ and an algebraic closure $\fk$ of 
$\FF{p}$. Denote by $\calA_g$ the moduli space of principally polarized abelian 
varieties of dimension $g$ over $\fk$ and by $\calA_I$ the moduli space of 
abelian varieties of dimension $g$ over $\fk$ with Iwahori level structure (see 
Section \ref{secnotation} for details). 

In this paper we determine an explicit description of the fibers of the canonical 
projection $\pi:\calA_I\to\calA_g$ in the case $g=3$ and use this description to 
study the relationship between the natural stratifications on $\calA_I$ and 
$\calA_g$.

On $\calA_g$ we have the
$p$-rank stratification which has the property that two abelian varieties lie in 
the same stratum if and only if their $p$-ranks coincide.  We have the 
Ekedahl-Oort stratification, originally defined in \cite{Oort}, which is given 
by the isomorphism type of the kernel of multiplication by $p$ on the abelian 
variety. There is an explicit bijection from the set of EO strata to the set of 
final sequences of length $g$, that is, to the set of maps 
$\psi:\{1.\ldots,2g\}\to \mathbb{N}$ with $\psi(0)=0,\ \psi(2g)=g$, such that
\begin{equation*}
	\psi(i)\leq\psi(i+1)\leq\psi(i)+1
\end{equation*}
and
\begin{equation*}
	\psi(i)<\psi(i+1)\Leftrightarrow \psi(2g-i)=\psi(2g-i-1)
\end{equation*}
for $0\leq i<2g$. If $\psi$ is a final sequence, we denote the corresponding EO 
stratum by $EO_\psi$. 

Trivially the EO stratification is a refinement of the 
$p$-rank stratification.

Furthermore we have the Newton stratification, given by the isogeny type of the 
Barsotti-Tate group of the abelian variety. We are primarily concerned with one 
special Newton stratum, namely the supersingular locus $\calS_g$. In general 
neither of the Newton or the EO stratification is a refinement of the other. In 
fact the supersingular locus is not a union of EO strata for $g\geq 3$. In 
\cite{Harashita} Harashita determines those EO strata that are entirely 
contained in the supersingular locus. 

For $g=3$ there are four EO strata of $p$-rank 0, totally ordered by their 
dimensions. By Harashita's result, the 0- and the 1-dimensional stratum are 
contained in the supersingular locus. Using a normal form for the Dieudonn\'e 
module of the Barsotti-Tate group of a supersingular abelian variety, due to 
Harashita, we prove that the 2-dimensional EO stratum is contained in the 
complement of $\calS_3$.  It then follows for dimension reasons that the 
3-dimensional EO stratum intersects the supersingular locus in an open dense 
subset of $\calS_3$.

On $\calA_I$ we have the Kottwitz-Rapoport stratification, given by the 
relative position of the chain of de Rham cohomology groups and the chain of 
Hodge filtrations associated with an element of $\calA_I$. There is an explicit 
bijection from the set of KR strata to the set of admissible elements $\Adm$, 
where the latter is a subset of the extended affine Weyl group of the group of symplectic 
similitudes $\mathrm{GSp}_{2g}$. There is a unique element $\tau$ of length 0 
such that $\Adm\subset W_a\tau$, where $W_a$ is the affine Weyl group of $G$, a 
Coxeter group generated by simple reflection $s_0,\ldots,s_g$ (described 
explicitly in Section \ref{GroThe}). 

We denote by $\calS_I$ the preimage of $\calS_g$ under $\pi$, which we also call 
the supersingular locus.
In \cite{KRDL} and \cite{SSLIW} Görtz and Yu study the dimension of $\calS_I$ 
and they determine those KR strata that are entirely contained in the 
supersingular locus. But again it is not true in general that $\calS_I$ is a 
union of KR strata and it is natural to ask which other KR strata have a 
nonempty intersection with the supersingular locus and what the dimension of 
this intersection is. Another question concerning the KR stratification deals 
with its relationship to the EO stratification. It is known that the image of a 
KR stratum under $\pi$ is always a union of EO strata but it is not known which 
EO strata occur in the image of a given arbitrary KR stratum. This question has 
been studied by Ekedahl and van der Geer in \cite{Geer} (cf. \cite[Section 
9]{SSLIW}) and also by Görtz and Hoeve in \cite{Hoeve} . 

To answer these questions for $g=3$ we need to investigate the fibers of $\pi$.  
Classical Dieudonn\'e theory provides us with an injective map from a fiber of 
$\pi$ into a suitable flag variety over $\fk$ and it can be shown that this map 
is actually a universally injective, finite morphism of algebraic varieties over 
$\fk$. In particular it induces a universal homeomorphism onto its image and 
in order to study topological properties of the fibers it is therefore 
sufficient to study their images under these respective maps. Up to isomorphism 
these images only depend on the EO stratum of the basepoint and hence there are 
only finitely many cases that have to be considered. If $\psi$ is a final 
sequence we denote this image of the fiber over a point of $EO_\psi(\fk)$ by 
$\AFlag_{\psi}=\AFlag_{\psi,2g}$. While the conditions that determine 
$\AFlag_\psi$ as a closed subvariety of a full flag variety over $\fk$ are easy 
to describe, the geometry of the resulting variety is rather complicated.

To give the reader an impression of what has to be expected let us sketch the 
geometry of the variety $\AFlag_{\psi_0}$, where $EO_{\psi_0}$ is the 
0-dimensional EO stratum, see Section \ref{FibI}. 

\begin{thm}
	Let $g=3$ and let $A\in\calA_3(\fk)$ be a \emph{superspecial} abelian 
	variety. Then there is a universal homeomorphism from the fiber 
	$\pi^{-1}(A)$ onto $\AFlag_{\psi_0}$. The variety $\AFlag_{\psi_0}$ is 
	decomposed into irreducible components
	\begin{equation*}
		Y\cup Z\cup \coprod_{\zeta\in \widetilde {I}} T_\zeta,
	\end{equation*}
	where $\widetilde{I}$ can be chosen as $\{(x:y:z)\in \PR{2}(\FF{p^2})\pipe 
	x^pz+y^{p+1}+xz^p=0\}$ and such that
	\begin{itemize}
		\item $Y$ is isomorphic to the variety of full flags in 
			$\fk^3$,
		\item $Z$ can be considered as a $\PR{1}_\fk$-bundle over a variety $Z_0$, 
			where $Z_0$ is itself a $\PR{1}_\fk$-bundle over the irreducible curve 
			$V_+(X_1^pX_3+X_2^{p+1}+X_1X_3^p)\subset \PR{2}_{\fk}$ (for homogenous 
			coordinates $X_1$, $X_2$ and $X_3$ on $\PR{2}_{\fk}$),
		\item  each $T_\zeta$ is 
			isomorphic to the blowing-up of $\PR{2}_\fk$ in a closed point.
	\end{itemize}
\end{thm}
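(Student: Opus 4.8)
The plan is to discard the moduli interpretation as quickly as possible and to work directly with the variety $\AFlag_{\psi_0}$. By the general results recalled in the introduction, $\pi^{-1}(A)$ admits a universal homeomorphism onto $\AFlag_{\psi_0}$, so it suffices to analyse that variety, and since $A$ is superspecial $\AFlag_{\psi_0}$ is (the ``type $\psi_0$'' part of) the variety of complete, self-dual, $F$- and $V$-stable flags in the $6$-dimensional $\fk$-vector space $M=\Dieu(A[p])$. The first step is therefore to make $M$ completely explicit: superspeciality means that $M_1:=\ker F=\ker V=\im F=\im V$ is a $3$-dimensional Lagrangian subspace for the symplectic form on $M$, and that $F$ and $V$ induce a $\sigma$- respectively $\sigma^{-1}$-semilinear isomorphism $M/M_1\overset{\sim}{\to}M_1$. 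After fixing a symplectic basis adapted to this decomposition I would write down the matrices of $F$, $V$ and of the form, and translate the incidence conditions cutting out $\AFlag_{\psi_0}$ inside the ambient flag variety into completely explicit closed conditions on a flag.

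The second step is a stratification of $\AFlag_{\psi_0}$ by the coarse combinatorial datum recording the relative positions of the members of the flag with respect to the fixed subspace $M_1$ (and the position of the lowest step with respect to $F$ and $V$). Elementary linear algebra, using only $\ker F=\ker V=M_1$ and $\dim M_1=3$, pins this down to a short list of types. The key mechanism is that, for a member $\calF$ of the flag transverse to $M_1$, the requirement that $\calF\cap M_1$ absorb $F\calF$ and $V\calF$ forces the image of $\calF$ in $M/M_1$ onto a Hermitian-type locus, because $\langle Fv,v\rangle$ and $\langle Vv,v\rangle$ are Hermitian forms in $v$. Working in coordinates on $M/M_1$, one identifies this locus with the curve $\calC:=V_+(X_1^pX_3+X_2^{p+1}+X_1X_3^p)\subset\PR{2}_\fk$, and its $\FF{p^2}$-rational points with exactly the directions $v$ for which the two semilinear conditions become dependent (equivalently, the $v$ fixed by the $\sigma^2$-semilinear automorphism $V^{-1}F$). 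This is the origin of the set $\widetilde I$ and of the count $\#\widetilde I=p^3+1$.

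The third step is to read off the components. On the stratum where the Lagrangian member of the flag equals $M_1$, self-duality eliminates the upper half of the flag and what remains is precisely the variety of complete flags in $M/M_1\cong\fk^3$; this is $Y$. Over the generic points of $\calC$ the residual freedom in the lower steps of the flag assembles into two successive $\PR{1}$-bundles, giving $Z\to Z_0\to\calC$. Over each $\zeta\in\widetilde I$ the degenerate flags occurring there do not deform into $Z$; writing out the resulting two-dimensional family in coordinates and taking its closure in the ambient flag variety identifies it with the blow-up of $\PR{2}_\fk$ in one point, which is the component $T_\zeta$. Finally one checks: that every remaining stratum has dimension $<\dim\AFlag_{\psi_0}$ and lies in $\overline Y$, $\overline Z$ or some $\overline{T_\zeta}$ (completeness of the list); that no one of $Y$, $Z$, $T_\zeta$ is contained in another, for which it suffices to exhibit a point of each lying on no other (irredundancy); and the stated internal geometry, i.e.\ the two $\PR{1}$-bundle structures on $Z$ obtained by successively forgetting the two lowest steps of the flag, and the position of the blown-up point on each $T_\zeta$.

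I expect the computational middle step to be the real work, and the single hardest point to be the local analysis over the $\FF{p^2}$-points of $\calC$: one has to see both that the closure of the two-dimensional family of degenerate flags is a blow-up of $\PR{2}$ (rather than $\PR{2}$ itself, or something more singular), and that these closures are genuine extra irreducible components rather than being absorbed by $Z$. Keeping track of all the incidence strata carefully enough to be certain that no component is missed, and that the geometry of $Z$ really is a tower of $\PR{1}$-bundles over the Hermitian curve and not merely birational to one, is where the bookkeeping is most delicate.
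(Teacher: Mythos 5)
Your proposal is correct and follows essentially the same route as the paper: one passes to $\AFlag_{\psi_0}$ via the universal homeomorphism $\iota_A$, makes $F$, $V$ and the pairing explicit on the superspecial Dieudonn\'e module, stratifies flags by their position relative to $M_1=\im V=\ker F$, and identifies the pieces $Y$, $Z$ (as a tower of $\PR{1}$-bundles over the Hermitian-type curve, whose $\FF{p^2}$-points index the $T_\zeta$) and the blow-ups $T_\zeta$ by direct coordinate computation in standard charts. Your identification of where the real work lies (the closure computations showing each $T_\zeta$ is a blow-up and is not absorbed by $Z$, and the completeness of the case analysis) matches exactly the content of the paper's Section \ref{proofflag}.
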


Sticking to the notation of the Theorem, we see that $\dim Y=\dim Z=3$ and $\dim 
T_\zeta=2$. Furthermore the $T_\zeta$ are pairwise disjoint. The 
intersection $Y\cap Z$ is isomorphic to the variety $Z_0$. $Z$ intersects each 
$T_\zeta$ in its exceptional curve, while $Y\cap T_\zeta$ is a different 
subvariety of $T_\zeta$ isomorphic to $\PR{1}_\fk$. Finally the triple 
intersection $Y\cap Z\cap T_\zeta$ only consists of one point.

Concerning the fiber over abelian varieties of positive $p$-rank we prove the 
following general result (modelled on the ``shuffle construction'' explained in
\cite[5.2]{Yu}) that provides a method for reducing the case of 
positive $p$-rank to the case of $p$-rank 0 in lower dimensions, see Section 
\ref{secposprank}. 

\begin{prop}
	Let $g\geq 1$, $k\geq 0$ and let $A\in\calA_g(\fk)$ be of $p$-rank $k$. Let $\psi$ be the final 
	sequence with $A\in EO_\psi$.
	\begin{enumerate}
		\item Let $A$ be ordinary. Then the fiber over $A$ is 
			discrete and 
			\begin{equation*}
				\# \left(\pi^{-1}(A)\right)=ON_g:=
				2^g\#\Flag_g(\FF{p})=2^g\frac{\prod_{l=1}^g (p^l-1)}{(p-1)^g}.
			\end{equation*}
			Here we denote by $\Flag_{g}(\FF{p})$ the set of flags 
			$(\calF_j)_{j=0}^g$ in $(\FF{p})^g$ with $\dim \calF_j=j$ for all 
			$0\leq j\leq g$.
		\item Let $1\leq k\leq g-1$. Then $\AFlag_\psi$ is isomorphic to 
			$\begin{pmatrix}g\\k\end{pmatrix}ON_{k}$ disjoint copies of 
			$\AFlag_{\widetilde{\psi},2(g-k)}$, where $\widetilde{\psi}$ is the 
			final sequence of length $g-k$ determined by 
			$\widetilde{\psi}(i)=\psi(k+i)-k$ for $0\leq i\leq g-k$.
	\end{enumerate}
\end{prop}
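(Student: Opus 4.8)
The plan is to mimic the ``shuffle construction'' of \cite[5.2]{Yu}. Recall from Section~\ref{secnotation} and the discussion preceding the proposition that it is enough to describe the closed subvariety $\AFlag_\psi$ of the full flag variety of $M:=\Dieu(A[p])$, a $2g$-dimensional $\fk$-vector space carrying a symplectic form $\paird$ induced by the principal polarization and the semilinear operators $F,V$; by definition $\AFlag_\psi$ consists of the complete flags $\calF_\bullet$ that are self-dual, $\calF_{2g-j}=\calF_j^\perp$, and satisfy the explicit $F$- and $V$-conditions attached to $\psi$. Since $A$ has $p$-rank $k$, over the perfect field $\fk$ there is a canonical direct product decomposition $A[p]=A[p]^{\mathrm{mult}}\times A[p]^{\mathrm{bi}}\times A[p]^{\mathrm{et}}$ with $A[p]^{\mathrm{mult}}\cong\mu_p^{k}$, $A[p]^{\mathrm{et}}\cong(\mathbb Z/p)^{k}$ and $A[p]^{\mathrm{bi}}=G[p]$, where $G$ is the bi-infinitesimal part of $A[p^\infty]$: a principally polarized Barsotti--Tate group of dimension $g-k$. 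Dually, $M=M^t\oplus M^0\oplus M^e$ with $\dim M^e=\dim M^t=k$, $\dim M^0=2(g-k)$, the form pairing $M^e$ with $M^t$ perfectly and restricting to a perfect symplectic form on $M^0$, and with $F$ bijective on $M^e$ and zero on $M^t$, $V$ bijective on $M^t$ and zero on $M^e$, and $F,V$ both nilpotent on $M^0$.

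Because $A$ has $p$-rank $k$, one has $\psi(i)=i$ for $0\le i\le k$, hence (by the final-sequence symmetry) $\psi$ is constant on $\{2g-k,\dots,2g\}$; so $\widetilde\psi$ is a genuine final sequence of length $g-k$, namely the Ekedahl--Oort type of $G$, and everything now depends only on the linear algebra of $(M,\paird,F,V)$ with the decomposition. The heart of the argument is the rigidity statement that every $\calF_\bullet\in\AFlag_\psi(\fk)$ is \emph{adapted} to the decomposition, i.e.\ $\calF_j=(\calF_j\cap M^t)\oplus(\calF_j\cap M^0)\oplus(\calF_j\cap M^e)$ for all $j$. I would establish this by induction on $j$: the $F$- and $V$-conditions defining $\AFlag_\psi$ constrain how $\calF_\bullet$ meets $\ker F=M^t\oplus(\ker F\cap M^0)$ and $\ker V=M^e\oplus(\ker V\cap M^0)$, and bijectivity of $F$ on $M^e$ (respectively of $V$ on $M^t$) then forces the line $\calF_j/\calF_{j-1}$ to lie, modulo $\calF_{j-1}$, in a single summand.

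Given adaptedness, the record of ``which summand grows at step $j$'' is a word $\sigma\in\{t,0,e\}^{2g}$. Self-duality makes the induced flag on $M^0$ self-dual and the induced flags on $M^e$, $M^t$ mutually annihilating; hence $\sigma$ is determined by its first $g$ letters, of which exactly $g-k$ equal $0$ (the $M^0$-flag has $g-k$ steps in each half), giving $\binom{g}{g-k}2^{k}=\binom{g}{k}2^{k}$ admissible words. The $F$-conditions force the induced flag on $M^e$ to be $F$-stable, hence $\FF{p}$-rational --- one of the $\#\Flag_k(\FF{p})$ complete flags of the $k$-dimensional $\FF{p}$-space $(M^e)^{F=1}$ --- and this determines the flag on $M^t$; the residual conditions on the $M^0$-part are precisely those defining $\AFlag_{\widetilde\psi,2(g-k)}$. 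Conversely one constructs directly --- this is the shuffle construction, cf.\ \cite[5.2]{Yu} --- for each admissible $\sigma$ and each $\FF{p}$-rational flag on $M^e$, a morphism from $\AFlag_{\widetilde\psi,2(g-k)}$ to $\AFlag_\psi$ sending $\calG^0_\bullet$ to the $\sigma$-shuffle of $\calG^0_\bullet$ with the fixed $M^e$- and $M^t$-flags; on the locus where the relevant intersection and sum dimensions are constant this is an isomorphism onto its image, with inverse $\calF_\bullet\mapsto\calF_\bullet\cap M^0$. By the rigidity statement these images are pairwise disjoint closed subsets covering $\AFlag_\psi$, hence also open.

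Since $\binom{g}{k}2^{k}\#\Flag_k(\FF{p})=\binom{g}{k}ON_k$, this gives (2). For (1), take $k=g$: then $M^0=0$, $A$ is ordinary, $\AFlag_{\widetilde\psi,0}$ is a single point, so $\AFlag_\psi$ --- and hence $\pi^{-1}(A)$, which is homeomorphic to it --- is a disjoint union of $\binom{g}{g}2^{g}\#\Flag_g(\FF{p})=ON_g$ points, and $\#\Flag_g(\FF{p})=\prod_{l=1}^{g}(p^l-1)/(p-1)^g$ is the usual count of complete flags in $\FF{p}^{g}$. The main obstacle is the rigidity statement --- showing that \emph{every} flag satisfying the defining conditions, not just those produced by the shuffle construction, is adapted to the decomposition --- together with the bookkeeping that matches the $F$- and $V$-conditions for $\psi$ with $\FF{p}$-rationality on the \'etale/multiplicative summands and with the conditions for $\widetilde\psi$ on $M^0$. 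Upgrading the resulting bijections to isomorphisms of varieties is then routine, since over $\fk$ the decomposition $M=M^t\oplus M^0\oplus M^e$ is canonical and on each of the finitely many adapted loci the operations above are morphisms.
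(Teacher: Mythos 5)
Your proposal follows essentially the same route as the paper's proof of Proposition \ref{posto0red}: decompose $\Dieu(A[p])$ into its \'etale, multiplicative and bi-infinitesimal summands, show every flag in $\AFlag_\psi$ is adapted to this decomposition, and then count the shuffle data (the word recording which summand grows at each step, the $\FF{p}$-rational flag on the \'etale part) against the residual local-local piece $\AFlag_{\widetilde\psi,2(g-k)}$; your count $\binom{g}{k}2^k\#\Flag_k(\FF{p})=\binom{g}{k}ON_k$ agrees with the paper's, which organizes (2) as a two-summand reduction to $\AFlag_{\widehat{w},2k}\times\AFlag_{\widetilde{w},2(g-k)}$ via Lemma \ref{posprank} and Proposition \ref{splitflag} and then quotes part (1). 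The one genuine difference is how the key adaptedness ("rigidity") statement is obtained. The paper gets it for free from functoriality: the flags in the fiber are images of Dieudonn\'e-module maps induced by the chain of isogenies, and such images automatically split along the canonical decomposition (Equation \ref{natsplit}). You instead propose to prove it by pure linear algebra from $F$- and $V$-stability; this does work, and most cleanly not by your step-by-step induction but by the Fitting decomposition for the $\sigma$-linear operator $F$ restricted to an $F$-stable subspace $W$ (which places $\bigcap_n F^n(W)$ inside $M^e$ and the $F$-nilpotent part inside $M^t\oplus M^0$), followed by the same argument for $V$. Your version is more self-contained at the level of the defining conditions of $\AFlag_\psi$, while the paper's is shorter because it exploits the moduli interpretation; either is acceptable, but if you keep the inductive formulation you should spell out why the line $\calF_j/\calF_{j-1}$ cannot have nonzero components in two summands simultaneously (the Fitting argument does this uniformly and avoids the induction altogether).
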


This result will allow us determine the number of connected components of the 
fibers of $\pi$:

\begin{prop}
	Let $g\geq 1$ and $k\geq 0$. If $A\in\calA_g(\fk)$ is of $p$-rank $k$,
	the fiber $\pi^{-1}(A)$ consists of $\begin{pmatrix}g\\k\end{pmatrix}ON_{k}$ 
	connected components. In particular it is connected if and only if $k=0$.
\end{prop}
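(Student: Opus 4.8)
The plan is to deduce the count of connected components directly from the previous Proposition by induction on $g$, with the $p$-rank $k=0$ case serving as the base of the whole argument. For $k=g$, i.e. $A$ ordinary, part (1) of the previous Proposition already gives that $\pi^{-1}(A)$ is a finite set of $ON_g$ points, hence has $ON_g = \binom{g}{g}ON_g$ connected components. For $0<k<g$, part (2) of that Proposition identifies $\AFlag_\psi$ — which is universally homeomorphic to $\pi^{-1}(A)$, so has the same number of connected components — with $\binom{g}{k}ON_k$ disjoint copies of $\AFlag_{\widetilde\psi,2(g-k)}$, where $\widetilde\psi$ is a final sequence of length $g-k$. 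The abelian variety whose fiber-image is $\AFlag_{\widetilde\psi,2(g-k)}$ has $p$-rank $0$ in dimension $g-k$, so it suffices to show that in the $p$-rank $0$ case the fiber is connected; then $\AFlag_\psi$ has exactly $\binom{g}{k}ON_k$ connected components, as claimed, and the "in particular" statement follows since $\binom{g}{k}ON_k=1$ forces $k=0$ (as $ON_k\geq 2^k\geq 2$ for $k\geq 1$, using $\#\Flag_k(\FF p)\geq 1$).

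So the real content is: if $A\in\calA_g(\fk)$ has $p$-rank $0$, then $\pi^{-1}(A)$ is connected, equivalently $\AFlag_\psi$ is connected, where $\psi$ is the final sequence of $A$. I would prove this by exhibiting $\AFlag_\psi$ as the image of a flag variety (or a tower of projective-bundle-type fibrations) under a morphism, using the explicit description of $\AFlag_\psi$ as a closed subvariety of a full flag variety cut out by the incidence conditions coming from the chain of Dieudonné submodules. The key point is that the conditions defining a point of $\pi^{-1}(A)$ — choices of a chain of subspaces compatible with $F$ and $V$ acting on the Dieudonné module $M/pM$ — can be made step by step, and at each step the space of valid choices, while possibly reducible, is connected and contains a distinguished point (coming from the canonical flag) in the closure of every component; concretely, the "big component" $Y\cong\Flag_g(\fk)$ in the $\psi_0$ case of the first Theorem already shows a full flag variety sits inside, and all other components meet it. Alternatively, and perhaps more cleanly, I would argue that $\AFlag_\psi$ always contains a copy of the full flag variety $\Flag_g(\fk)$ as one irreducible component $Y$ (the "generic" choices), that $Y$ is connected, and that every other irreducible component of $\AFlag_\psi$ meets $Y$; connectedness of the whole variety then follows since a union of connected sets each meeting a fixed connected set is connected.

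The main obstacle will be the last claim — that every irreducible component of $\AFlag_\psi$ meets the distinguished component $Y$ — since this requires understanding the stratification of $\AFlag_\psi$ by the possible relative positions, and in particular knowing that the most degenerate strata are in the closure of the generic one. For $g=3$ this can be checked by hand from the explicit equations (and in the superspecial case it is exactly the incidence information recorded after the first Theorem: $Y\cap Z\cong Z_0$, $Y\cap T_\zeta\cong\PR1_\fk$, all nonempty). For general $g$ in the $p$-rank $0$ case one can instead avoid component-by-component analysis: realize $\AFlag_\psi$ as the image under a proper morphism of an iterated projective bundle over $\Spec\fk$ — at each of the $2g$ steps one chooses a line or hyperplane in a variety that is Zariski-locally a projective space bundle — so the source is connected, and the image of a connected space under a continuous map is connected. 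I would set this up using the chain description from Section \ref{FibI}, checking that the incidence conditions define, stepwise, a closed subscheme of a projective bundle whose fibers over the previous stages are nonempty and connected (each being a linear section of a projective space, hence a projective space or a cone, in any case connected). That reduction — exhibiting the stepwise fibration with connected fibers — is the step I expect to require the most care, but it is exactly the kind of statement the explicit coordinates of Section \ref{FibI} are designed to make checkable.
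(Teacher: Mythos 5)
Your overall architecture is the paper's: combine the reduction proposition (part (1) for $k=g$, part (2) for $0<k<g$) with connectedness of the fiber in the $p$-rank $0$ case; the counting and the ``in particular'' clause go through exactly as you say. The gap is in the $p$-rank $0$ connectedness statement, which is the entire content of the proposition and which you leave as something ``to be checked''. The paper (following Yu) proves it by the stepwise fibration you sketch in your second approach: the forgetful maps $\zeta_i:\AFlag_{w,i}\to\AFlag_{w,i-1}$ are closed surjections, and connectedness propagates up the tower provided each fiber is nonempty and connected. But your justification that the fibers are ``linear sections of a projective space, hence \ldots{} connected'' is not automatic, and it is precisely here that the $p$-rank $0$ hypothesis must enter. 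The fiber of $\zeta_i$ over $(W_j)_{j=0}^{i-1}$ is the set of lines in $\calW=W_{i-1}^\perp/W_{i-1}$ stable under the induced semilinear maps $\overline F,\overline V$; for a $\sigma$-linear map the stable lines are not a linear section in general but the union of $\mathbb{P}(\ker)$ with a set of twisted eigenlines, and the latter are exactly what produce the finite discrete fibers over ordinary points. One needs two facts: $p$-rank $0$ forces $\overline F$ and $\overline V$ to be nilpotent, so every stable line lies in $\calU=\ker\overline F\cap\ker\overline V$ and the fiber is the linear space $\mathbb{P}(\calU)$; and $\calU\neq 0$, which follows from $\im\overline V\subset\ker\overline F$, $\im\overline F\subset\ker\overline V$ and the fact that a nonzero nilpotent endomorphism has $\im\cap\ker\neq 0$. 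Without these two points the argument does not close. (Also, the tower is not literally an iterated projective bundle, since $\dim\calU$ jumps from point to point; one argues instead that a closed surjection with nonempty connected fibers onto a connected base has connected source.)

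Separately, your first alternative route --- that $\AFlag_\psi$ always contains an irreducible component isomorphic to $\Flag_g(\fk)$ which meets every other component --- is false in general: already for $g=3$ and $w=s_{323}$ the variety $\AFlag_{s_{323}}$ is a union of three curves, each isomorphic to $\PR{1}_{\fk}$, with no $3$-dimensional component at all. Since you only offer this as an alternative and your main route is the fibration argument, this does not sink the proposal, but it should be discarded.
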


From the calculations of the varieties $\AFlag_\psi$ for $g=3$ it is rather easy 
to determine which KR strata intersect the fiber of $\pi$ over a given element of 
$\calA_3(\fk)$ and what the dimension of this intersection is. From this we can 
determine the EO strata which occur in the image of a given KR stratum: 

\begin{thm}[Section \ref{ESset}]
	For an element $x\in\Adm$ of $p$-rank 0 denote by $\ES(x)$ the 
	set of final sequences such that 
	$\pi(\KR{x})=\coprod_{\psi\in\ES(x)}EO_\psi$. Then the following table 
	contains a complete list of the sets $\ES(x)$ in the case $g=3$.
	\begin{center}
		\begin{tabular}{|l|cccc|}
			\hline
			$x$&\multicolumn{4}{c|}{$\ES(x)$}\\
			\hline
			\hline
			$\tau, s_1\tau, s_2\tau, s_{21}\tau, s_{12}\tau, s_{121}\tau$& 
			$\psi_0$&&&\\
			\hline
			$s_3\tau, s_0\tau$& &$\psi_1$&&\\
			\hline
			$s_{30}\tau$&$\psi_0$&$\psi_1$&&\\
			\hline
			\hline
			$s_{10}\tau, s_{23}\tau, s_{20}\tau, s_{31}\tau,s_{01}\tau, 
			s_{32}\tau$&&&$\psi_2$&\\
			\hline
			$s_{310}\tau, s_{320}\tau$&$\psi_0$&&$\psi_2$&\\
			\hline
			$s_{3120}\tau$&$\psi_0$&$\psi_1$&$\psi_2$&\\
			\hline
			$s_{120}\tau, s_{312}\tau,s_{201}\tau, s_{231}\tau$&&$\psi_1$&$\psi_2$&\\
			\hline
			$s_{010}\tau, s_{323}\tau, s_{301}\tau, s_{230}\tau$&&&&$\psi_3$\\
			\hline
			$s_{2301}\tau$&$\psi_0$&$\psi_1$&&$\psi_3$\\
			\hline
			$s_{3010}\tau, s_{3230}\tau$&&&$\psi_2$&$\psi_3$\\
			\hline
		\end{tabular}
	\end{center}
	Here $\psi_i$ denotes the final sequence corresponding to the $i$-dimensional EO 
	stratum of $p$-rank 0, $0\leq i\leq 3$.  
\end{thm}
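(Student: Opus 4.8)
The strategy is to reduce the computation of each $\ES(x)$ to a question about the varieties $\AFlag_\psi$ constructed in the earlier sections. Recall that for $A\in EO_\psi(\fk)$ the fiber $\pi^{-1}(A)$ is universally homeomorphic to $\AFlag_\psi$, and this homeomorphism is compatible with the KR stratification: the KR stratum $\KR{x}$ pulls back to a locally closed subset $\AFlag_\psi^{(x)}\subseteq\AFlag_\psi$ cut out by an explicit relative-position condition between the Hodge chain and the de Rham chain. Thus $\psi\in\ES(x)$ if and only if $\AFlag_\psi^{(x)}\neq\emptyset$. Since there are only four $p$-rank-$0$ final sequences $\psi_0,\psi_1,\psi_2,\psi_3$ for $g=3$, and for each one we have (or will have, in the body of the paper) an explicit decomposition of $\AFlag_{\psi_i}$ into irreducible components with explicit equations — the case $\psi_0$ being the Theorem quoted above, and $\psi_1,\psi_2,\psi_3$ handled analogously in Section \ref{FibI} — the table is obtained by checking, for each pair $(x,\psi_i)$ with $x$ of $p$-rank $0$, whether the closed condition defining $\KR{x}$ is satisfied somewhere on $\AFlag_{\psi_i}$.

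\emph{Carrying this out.} First I would recall the group-theoretic description of $\Adm$ for $g=3$ from Section \ref{GroThe}: list the elements $x\in\Adm$ of $p$-rank $0$ together with their reduced expressions $s_{i_1\cdots i_r}\tau$, and translate the combinatorial datum of $x$ (an alcove, or equivalently the relative position of the standard chain and its Hodge filtration) into the explicit incidence/rank conditions a flag in $\AFlag_{\psi_i}$ must satisfy to lie in $\KR{x}$. Second, for each $\psi_i$ I would run through the irreducible components of $\AFlag_{\psi_i}$ from the structural results in Section \ref{FibI}: on each component the relative-position function is constant on a dense open set and jumps on proper closed subsets, so one reads off exactly which $x$ occur. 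Third, I would assemble the results: $\psi_i\in\ES(x)$ precisely when $x$ occurs on $\AFlag_{\psi_i}$, and the closure relations among EO strata (they are totally ordered: $\psi_0<\psi_1<\psi_2<\psi_3$) give a consistency check — since $\pi(\KR{x})$ is a union of EO strata and $\KR{x}$ is irreducible with $\pi$ proper, $\ES(x)$ is an order ideal's worth of strata intersected with the relevant interval, which matches the pattern visible in the table (e.g.\ $\ES(s_{3120}\tau)=\{\psi_0,\psi_1,\psi_2\}$, $\ES(s_{2301}\tau)=\{\psi_0,\psi_1,\psi_3\}$). Finally, one checks that the listed $x$'s exhaust all $p$-rank-$0$ admissible elements, so the table is complete.

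\emph{Main obstacle.} The conceptual content is light — it is a finite bookkeeping once the fiber geometry is in hand — but the bookkeeping is genuinely delicate. The hard part will be the bottom block of the table, where $\AFlag_{\psi_2}$ and $\AFlag_{\psi_3}$ are needed: these varieties are more complicated than $\AFlag_{\psi_0}$ (which already has three types of component), and one must correctly identify, on each stratum and each intersection of strata, which relative position the de Rham/Hodge pair attains. In particular distinguishing strata like $s_{310}\tau$ versus $s_{320}\tau$ (both giving $\{\psi_0,\psi_2\}$) or $s_{3010}\tau$ versus $s_{3230}\tau$ requires tracking the precise flag-incidence conditions through the $\PR{1}_\fk$-bundle structures, and it is here that an error is easiest to make. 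A secondary subtlety is that $\pi$ is only a universal homeomorphism onto its image, not an isomorphism, so one must be careful that "nonempty intersection of $\KR{x}$ with the fiber" is computed on $\AFlag_{\psi_i}$ rather than on the fiber itself — but since the map is a homeomorphism, nonemptiness transfers faithfully, and the argument goes through.
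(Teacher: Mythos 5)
Your overall strategy is the paper's: by \cite[Corollary 3.3]{SSLIW} one has $\pi(\KR{x})=\coprod_{w\in\ES(x)}EO_w$, so $\psi_i\in\ES(x)$ if and only if the locus $\calL(x,\psi_i)\subset\AFlag_{\psi_i}$ of flags of KR type $x$ is nonempty, and this is checked case by case against the explicit component decompositions of $\AFlag_{\psi_0},\dots,\AFlag_{\psi_3}$ (Sections \ref{FibI}--\ref{FibIV}) by translating the KR type into the relative position of $(W_i)_i$ and $(V_w(W_i))_i$ via a KR basis. Two points, however, deserve correction or supplementation.

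First, your proposed consistency check is false: $\ES(x)$ is \emph{not} an order ideal (nor an interval) for the total order $\psi_0<\psi_1<\psi_2<\psi_3$. The table itself refutes this: $\ES(s_{2301}\tau)=\{\psi_0,\psi_1,\psi_3\}$ omits $\psi_2$, and $\ES(s_{3010}\tau)=\{\psi_2,\psi_3\}$ omits $\psi_0,\psi_1$. There is no contradiction with properness of $\pi$, because $\KR{x}$ is only locally closed, so $\pi(\KR{x})$ need not be closed; the closure relations among EO strata impose no such constraint. If you were to use downward-closedness to fill in entries, you would get wrong answers. Second, for $w=\id$ your plan of stratifying each irreducible component of $\AFlag_{\id}$ by KR type is substantially harder than what the paper does: $\AFlag_{\id}$ is three-dimensional with components of several types, and the paper explicitly declines to compute the sets $\calL(x,\id)$, settling for nonemptiness. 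The device that makes this tractable is Lemma \ref{KRgeneral}(\ref{KRgeneralsecond}): the inequality $g-a(A_0)\geq\#N_x$ with $N_x=\{i\mid \omega^2(i)<\omega(i)<i\}$ forces $N_x=\emptyset$ whenever $\calL(x,\id)\neq\emptyset$ (since $a=3$ on $EO_{\id}$), which eliminates all but eleven elements of $\Admn$ a priori; for those one then only has to exhibit a single explicit flag with a KR basis of the right type (e.g.\ the families $\calK(s_{310}\tau)$, $\calK(s_{320}\tau)$, whose construction requires solving Artin--Schreier-type equations). Without this or an equivalent a priori bound, the bookkeeping you describe for the superspecial fiber is where the argument is most likely to stall or err.
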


The upper block of the table contains the supersingular 
elements.

With this table we can show that the inclusion
\begin{equation}
	\coprod_{\substack{x\in\Admn\\\KR{x}\subset\calS_I}}\KR{x}\subseteq\pi^{-1}\left(\coprod_{\substack{w\in\Wfin\\EO_w\subset 
	\calS_g}}EO_w\right),
\end{equation}
which is valid for every $g\geq 1$, is a proper inclusion for $g=3$, negatively 
answering a question posed in a preliminary version of \cite{KRDL}.

Finally we show that for $g=3$ we have $\dim (\KR{x}\cap \calS_I)= \dim\KR{x}-1$
for every KR stratum $\KR{x}$ with $\emptyset\subsetneq \KR{x}\cap\calS_I\subsetneq 
\KR{x}$.

\begin{dank}
	I sincerely want to thank Ulrich G\"ortz for introducing me to this subject and for 
	his permanent interest in my work. The help he provided during the writing 
	of this paper has been invaluable. I also want to thank Michael Rapoport and 
	Chia-Fu Yu for helpful comments.
\end{dank}
\section{Notation}\label{secnotation}

\subsection{Basic notation and moduli spaces}

We fix a prime $p$, an integer $g\geq 1$, an integer $N\geq 3$ coprime to $p$, 
an algebraic closure $\fk$ of $\FF{p}$ and a primitive $N$-th root of unity 
$\zeta_N$ in $\fk$. Let $\sigma:\fk\to\fk$ denote the 
Frobenius morphism. We consider the moduli space $\calA_g=\calA_{g,N}$ of 
principally polarized abelian varieties of dimension $g$ over $\fk$ with a 
symplectic level-$N$-structure with respect to $\zeta_N$. It is a 
quasi-projective scheme over $\fk$, irreducible of dimension $g(g+1)/2$. We will 
usually omit the principal polarization and the level structure from our 
notation. We denote by $\calS_g$ the supersingular locus inside $\calA_g$. It is 
a closed subset, equi-dimensional of dimension $\left[\frac{g}{4}\right]$ by 
\cite{LiOort}. 

On the other hand, we consider the moduli space $\calA_I$ of tuples
\begin{equation*}
	(A_0\too{\alpha} A_1\too{\alpha}\cdots\too{\alpha} 
	A_g,\lambda_0,\lambda_g,\eta),
\end{equation*}
where
\begin{itemize}
	\item each $A_i$ is a $g$-dimensional abelian variety over $\fk$,
	\item $\alpha$ is an isogeny of degree $p$,
	\item $\lambda_0$ and $\lambda_g$ are principal polarizations on $A_0$ and 
		$A_g$, respectively, such that $(\alpha^g)^*\lambda_g=p\lambda_0$,
	\item $\eta$ is a symplectic level-$N$-structure on $A_0$ with respect to 
		$\zeta_N$.
\end{itemize}
$\calA_I$ has pure dimension $g(g+1)/2$. We will often omit $\eta$ and even 
$\lambda_0,\lambda_g$ from the notation. 

We denote by $\pi:\calA_I\to\calA_g$ the morphism sending a point 
\begin{equation*}
	(A_0\too{\alpha} A_1\too{\alpha}\cdots\too{\alpha} 
	A_g,\lambda_0,\lambda_g,\eta)
\end{equation*}
to the point $(A_0,\lambda_0,\eta)$. It is proper and surjective.

Inside $\calA_I$ we have the supersingular locus $\calS_I$, given by 
$\pi^{-1}(\calS_g)$ as a closed subset. It is shown in \cite{SSLIW} that for $g$ 
even we have $\dim \calS_I=g^2/2$ and that $(g^2-g)/2\leq \dim\calS_I\leq 
(g^2-1)/2$ if $g$ is odd. However the supersingular locus $\calS_I$ is not 
equi-dimensional as soon as $g\geq 2$.

\subsection{\texorpdfstring{The $p$-rank stratification}{The p-rank stratification}}\label{prank}

Let $X$ be a topological space. We call a set-theoretical decomposition 
$X=\coprod_{i\in I}X_i$ of $X$ a \emph{stratification on $X$} if for all $i\in 
I$ the set $X_i$ is nonempty, locally closed and satisfies
$\overline{X_i}=\cup_{j\in J_i}X_j$ for some subset $J_i\subset I$.

Let $A$ be an abelian variety of dimension $g$ over $\fk$. For $n\in\mathbb{N}$ we 
denote by $A[n]$ the kernel of multiplication by $n$ on $A$. It is a finite 
group scheme of rank $n^{2g}$ over $\fk$. There is an integer $0\leq i\leq g$ 
with $A[p](\fk)\simeq (\mathbb{Z}/p\mathbb{Z})^i$, called the \emph{$p$-rank} of 
$A$. We denote by $\calA_g^{(i)}$ the subset of $\calA_g$ where the $p$-rank of 
the underlying abelian variety is $i$. Then 
$\calA_g=\cup_{i\in\mathbb{N}}\calA_g^{(i)}$ is a stratification on $\calA_g$ 
with $\overline{\calA_g^{(i)}}=\cup_{j\leq i}\calA_g^{(j)}$. Similarly we write 
$\calA_{I}^{(i)}=\pi^{-1}(\calA_g^{(i)})$, but these sets do not give rise to a 
stratification on $\calA_I$.

\subsection{\texorpdfstring{The $a$-number}{The a-number}}

Let $\alpha_p$ be the $\fk$-group scheme representing the functor $S\mapsto 
\{s\in\calO_S(S)\pipe s^p=0\}$ on the category of $\fk$-schemes. For an abelian 
variety $A$ over $\fk$ we write $a(A)=\dim_{\fk}\mathrm{Hom}(\alpha_p,A)$. This 
integer is called the \emph{$a$-number} of $A$.

\subsection{Group theoretic notation}\label{GroThe}

We denote by $\mathrm{G}=\mathrm{GSp}_{2g}$ the group of symplectic similitudes.
We consider it as a subgroup of $\GL{2g}$ with respect to the embedding induced 
by the alternating form given on the standard basis vectors 
$e_1\ldots,e_{2g}$ by $(e_i,e_j)\mapsto 0,\ 
(e_{2g+1-i},e_{2g+1-j})\mapsto 0$ and $(e_i,e_{2g+1-j})\mapsto \delta_{ij}$ for 
$1\leq i,j\leq g$. We use the Borel subgroup of upper triangular matrices and 
the maximal torus $T$ of diagonal matrices. We denote by $W$ the finite Weyl 
group of $\mathrm{G}$ which we consider as a subgroup of the finite Weyl 
group of $\GL{2g}$. If we identify the latter with $S_{2g}$ in the usual way, an 
element $w$ of $S_{2g}$ lies in $W$ if and only if $w(i)+w(2g+1-i)=2g+1$ for all
$1\leq i\leq 2g$. Similarly we identify $X_*(T)$ with the group 
$\{(a_1,\ldots,a_{2g})\in\mathbb{Z}^{2g}\pipe 
a_1+a_{2g}=a_2+a_{2g-1}=\cdots=a_g+a_{g+1}\}$. For an element 
$x=(x_1,\ldots,x_{2g})$ of $X_*(T)$ we also write $x(i)$ instead of $x_i$. $W$ 
is generated by the elements $s_1,\ldots,s_g$ given by $s_g=(g,g+1)$ and 
$s_i=(i,i+1)(2g+1-i,2g-i)$ for $1\leq i\leq g-1$. Inside $W$ we have the subset 
$W_{\text{final},g}$ of elements $w$ with $w(1)<w(2)<\cdots<w(g)$. 

We denote by $\widetilde{W}=W\ltimes X_*(T)$ the extended affine Weyl group of 
$\mathrm{G}$. For an element $\lambda\in X_*(T)$ we denote by $t^\lambda$ the 
corresponding element of $\widetilde{W}$. We denote by $s_0$ and $\tau$ the 
elements of $\widetilde{W}$ given by
$s_0=(1,2g) t^{(1,0,\ldots,0,-1)}$ and 
\begin{equation*}
	\tau=\begin{pmatrix}1&\cdots&g&g+1&\cdots&2g\\g+1&\cdots&2g&1&\cdots&g\end{pmatrix}t^{(1,\ldots,1,0,\ldots,0)}.
\end{equation*}
The affine Weyl group $W_a$ of $\mathrm{G}$ is the subgroup of $\widetilde{W}$ 
generated by $s_0,\ldots,s_g$. It is an infinite Coxeter group. Our choice of 
generators $s_0,\ldots,s_g$ gives rise to a length function $\ell$ and the 
Bruhat order $\leq$ on $W_a$. We write $s_{i_1\ldots i_n}$ instead of 
$s_{i_1}\cdots s_{i_n}$. 

\subsection{Convention}

Let $K$ be an algebraically closed field. A \emph{variety (over $K$)} is a reduced 
scheme of finite type over $\Spec K$. A \emph{subvariety} of a variety is a reduced 
subscheme. If we identify a variety $X$ with its set $X(K)$ of $K$-valued points we 
refer to the latter object as a \emph{classical} variety.

\section{Dieudonn\'e modules}

This section introduces our notation for the Dieudonn\'e modules associated with 
the $p$-torsion of a principally polarized abelian variety. The principal 
polarization induces an isomorphism from the Dieudonn\'e module onto its dual 
and hence an isomorphism between co- and contravariant Dieudonn\'e theory. For 
most of our statements it will therefore not matter which theory we use. For the 
few statements where it is of importance, we will use the \emph{contravariant} 
theory. We refer to \cite{Dem} and \cite{Oda} for proofs of the statements 
below.

Given a ring $R$, an endomorphism $\alpha:R\to R$ and an $R$-module $M$, an 
additive map $\phi:M\to M$ is called \emph{$\alpha$-linear} if $\phi(r\cdot 
m)=\alpha(r)\cdot \phi(m)$ for all $r\in R,\ m\in M$.

Let $g\geq 1$.

\subsection{\texorpdfstring{The Dieudonn\'e module of $A[p]$}{The Dieudonn\'e module of A[p]}}\label{DieModAp}

Let $A\in \calA_g(\fk)$ and denote by $\Dieu=\Dieu(A[p])$ the Dieudonn\'e 
module of $A[p]$. It is a $2g$-dimensional vector space over $\fk$, equipped 
with linear maps $F:\Dieu^{(p)}\to \Dieu$ and $V:\Dieu\to \Dieu^{(p)}$, called 
Frobenius and Verschiebung respectively, where $\Dieu^{(p)}$ denotes the base 
change $\Dieu\mathrel{\otimes_{\fk,\sigma}}\fk$. As $\sigma$ is an isomorphism 
we can identify $\Dieu^{(p)}$ with $\Dieu$ and we will henceforth consider $F$ 
as a $\sigma$-linear and $V$ as a $\sigma^{-1}$-linear map $\Dieu\to\Dieu$. The 
principal polarization $A\to A^\vee$ induces a non-degenerate, alternating 
pairing $\paird=\paird_A$ on $\Dieu$. $F$, $V$ and this pairing have the 
following properties:

\begin{prop}\label{FVandPair}
\begin{enumerate}
	\item\label{imker} $\im V=\ker F$ and $\im F=\ker V$.
	\item\label{pair} $\left<Fx,y\right>=\left<x,Vy\right>^p$ for 
		all $x,y\in\Dieu$.
\end{enumerate}
\end{prop}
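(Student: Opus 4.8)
The plan is to reduce both assertions to the corresponding, well known facts for the Barsotti-Tate group $A[p^\infty]$ and then to pass to $p$-torsion; the required input is contained in \cite{Dem} and \cite{Oda}.

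Write $M=\Dieu(A[p^\infty])$ for the contravariant Dieudonn\'e module of the Barsotti-Tate group $A[p^\infty]$. It is free of rank $2g$ over the Witt ring $W(\fk)$, it carries a $\sigma$-linear injection $F$ and a $\sigma^{-1}$-linear injection $V$ with $FV=VF=p\cdot\id_M$, and the contravariant Dieudonn\'e module of $A[p]$ is the reduction $M/pM$, compatibly with $F$ and $V$ (apply $\Dieu(-)$ to $0\to A[p]\to A[p^\infty]\too{p}A[p^\infty]\to 0$). To prove part \ref{imker}: inside $M/pM$ one has $\overline V\,\overline F=\overline{VF}=\overline p=0$, whence $\im F\subseteq\ker V$ in $\Dieu$; conversely, if $x\in M$ satisfies $Vx\in pM=V(FM)$ then $Vx=V(Fz)$ for some $z\in M$, so $x=Fz$ by injectivity of $V$ on $M$, which gives $\ker V\subseteq\im F$. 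Interchanging $F$ and $V$ (using $pM=F(VM)$ and injectivity of $F$) yields $\im V=\ker F$.

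To prove part \ref{pair}: since $A$ is principally polarized, $\lambda\colon A\xrightarrow{\ \sim\ }A^\vee$ is an isomorphism and induces an isomorphism $\Dieu(A[p])\xrightarrow{\ \sim\ }\Dieu(A^\vee[p])$. Combining this with the Cartier-duality identification $A^\vee[p]=A[p]^\vee$, together with the fact that contravariant Dieudonn\'e theory turns Cartier duality into a perfect pairing $\Dieu(A[p])\times\Dieu(A[p]^\vee)\to\fk$ under which $F$ and $V$ get interchanged up to the $\sigma$-twist carried by $\Dieu(\mu_p)$, one transports everything to a non-degenerate pairing $\paird_A$ on $\Dieu$ with $\pair{Fx}{y}=\sigma\bigl(\pair{x}{Vy}\bigr)=\pair{x}{Vy}^p$, the last equality because $\sigma$ is the $p$-th power map on $\fk$. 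That $\paird_A$ is moreover \emph{alternating} is forced by $\lambda$ being a \emph{polarization}, i.e.\ a symmetric isomorphism $A\to A^\vee$; this is the Dieudonn\'e-module incarnation of the classical fact that the pairing attached to a polarization is alternating.

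The argument is essentially bookkeeping inside standard Dieudonn\'e theory, so I do not anticipate a serious obstacle; the two points that really need care are placing the Frobenius ($p$-th power) twist in part \ref{pair} on the correct side of the pairing, and checking that it is precisely $\lambda$ being a polarization — and not merely an isomorphism onto $A^\vee$ — that upgrades non-degeneracy to alternating-ness. Both are cleanest to verify for $A[p^\infty]$ equipped with its induced principal quasi-polarization, where $F$ and $V$ are honest injective operators, and then to reduce modulo $p$; that is the route I would follow.
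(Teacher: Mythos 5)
The paper does not prove this proposition at all: it is stated as a standard fact with the remark ``We refer to \cite{Dem} and \cite{Oda} for proofs of the statements below.'' Your write-up is the correct standard derivation of exactly those facts. Part (\ref{imker}) is complete and sound: $VF=FV=p\cdot\id$ on the free $W(\fk)$-module $\Dieui$ forces $F$ and $V$ to be injective there, $\bar V\bar F=0$ gives $\im F\subseteq\ker V$ modulo $p$, and the converse inclusion follows from $p\Dieui=V(F\Dieui)$ together with injectivity of $V$; rank--nullity for $\sigma$-semilinear maps (valid since $\sigma$ is an automorphism of $\fk$) is implicitly what makes the two inclusions into equalities of the right dimensions, and your argument in fact proves both inclusions directly, so nothing is missing. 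Part (\ref{pair}) correctly identifies the two inputs --- the adjunction of $F$ and $V$ under the Cartier-duality pairing with the $\sigma$-twist on the $V$-side, and the fact that symmetry of the polarization is what makes the pairing alternating --- and you have placed the $p$-th power on the correct side (a quick semilinearity check in each variable confirms $\left<Fx,y\right>=\left<x,Vy\right>^p$ is the consistent normalization). Since these duality facts are precisely what the paper outsources to \cite{Dem} and \cite{Oda}, your proof is, if anything, more explicit than the paper's treatment, and I see no gap.
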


For future reference we include the following

\begin{cor}\label{FandV}
	\begin{enumerate}
		\item\label{perp} For any subspace $W\subset\Dieu$ we have 
			$V(W^\perp)=F^{-1}(W)^\perp$.
		\item\label{imVsymp} $(\im V)^\perp = \im V$.
	\end{enumerate}
\end{cor}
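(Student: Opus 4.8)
The plan is to deduce both assertions from Proposition~\ref{FVandPair}, using nothing beyond the non-degeneracy of the pairing $\paird$, the identities $\im V=\ker F$ and $\im F=\ker V$, and the adjunction formula $\pair{Fx}{y}=\pair{x}{Vy}^p$. It is worth observing first that the second assertion is the special case $W=0$ of the first (indeed $V(\Dieu)=\im V$, while $F^{-1}(0)=\ker F=\im V$), so logically only the first needs proof; however the argument for the first assertion will use that $\im V$ and $\im F$ are Lagrangian, so I would establish those facts at the outset — which in effect proves the second assertion and its twin $(\im F)^\perp=\im F$ directly.

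For the Lagrangian facts I would simply chase definitions. An element $y$ lies in $(\im V)^\perp$ iff $\pair{Vx}{y}=0$ for all $x\in\Dieu$; since the form is alternating this is the same as $\pair{y}{Vx}=0$ for all $x$, hence — applying the bijection $t\mapsto t^p$ of $\fk$ — the same as $\pair{y}{Vx}^p=0$ for all $x$, which by the adjunction formula reads $\pair{Fy}{x}=0$ for all $x$, i.e.\ $Fy=0$ by non-degeneracy, i.e.\ $y\in\ker F=\im V$. An analogous (in fact shorter) computation starting from $\pair{Fx}{y}=0$ gives $(\im F)^\perp=\ker V=\im F$.

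For the first assertion, the inclusion $V(W^\perp)\subseteq F^{-1}(W)^\perp$ is the easy direction: for $x\in W^\perp$ and any $y$ with $Fy\in W$ the adjunction formula gives $\pair{y}{Vx}^p=\pair{Fy}{x}$, which vanishes because $Fy\in W$ and $x\in W^\perp$; hence $\pair{Vx}{y}=0$, and as $y$ ranges over $F^{-1}(W)$ this shows $Vx\in F^{-1}(W)^\perp$. For the reverse inclusion, take $x\in F^{-1}(W)^\perp$. Since $F(\im V)=F(\ker F)=0\subseteq W$ we have $\im V\subseteq F^{-1}(W)$, so $x\in(\im V)^\perp=\im V$ by the Lagrangian fact, and we may write $x=Vu$. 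The set of $V$-preimages of $x$ is the coset $u+\ker V=u+\im F$, so it meets $W^\perp$ exactly when $u\in W^\perp+\im F$; and $W^\perp+\im F=(W\cap\im F)^\perp$ because $(\im F)^\perp=\im F$. So it remains to check $\pair{u}{Fw}=0$ whenever $Fw\in W$: by the adjunction formula this equals, up to sign, $\pair{w}{Vu}^p=\pair{w}{x}^p$, which vanishes since $Fw\in W$ puts $w$ in $F^{-1}(W)$ while $x\in F^{-1}(W)^\perp$. Hence $u\in W^\perp+\im F$, so $x=Vu_0$ for some $u_0\in W^\perp$, i.e.\ $x\in V(W^\perp)$.

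The only genuine bookkeeping issue — the nearest thing to an obstacle — is keeping track, in each use of the adjunction formula, of the sign coming from the alternating property and the $p$-th power coming from semilinearity; since $t\mapsto -t$ and $t\mapsto t^p$ are bijections of $\fk$, none of this affects the vanishing statements, but it should be spelled out once. As a consistency check one can instead verify equality of dimensions: from $\dim\ker F=\dim\im V=g$ (a consequence of $(\im V)^\perp=\im V$) and $(\im F)^\perp=\im F$ one gets both $\dim V(W^\perp)$ and $\dim F^{-1}(W)^\perp$ equal to $g-\dim(W\cap\im F)$, which together with the easy inclusion again yields the claim.
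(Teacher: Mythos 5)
Your argument is correct, but it takes a noticeably longer road than the paper. The paper proves part (1) in one stroke: the chain
\begin{equation*}
x\in F^{-1}(W)\Leftrightarrow \forall y\in W^\perp\ \pair{Fx}{y}=0\Leftrightarrow \forall y\in W^\perp\ \pair{x}{Vy}=0\Leftrightarrow x\in V(W^\perp)^\perp
\end{equation*}
establishes the single identity $F^{-1}(W)=V(W^\perp)^\perp$, and taking orthogonal complements (using non-degeneracy and finite dimension, so that $U^{\perp\perp}=U$) immediately yields $F^{-1}(W)^\perp=V(W^\perp)$; part (2) is then read off as the case $W=0$ together with $\ker F=\im V$. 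You instead prove the two inclusions of (1) separately. Your easy inclusion $V(W^\perp)\subseteq F^{-1}(W)^\perp$ is essentially the same adjunction computation as the paper's chain, but your reverse inclusion requires extra machinery: you first establish the Lagrangian facts $(\im V)^\perp=\im V$ and $(\im F)^\perp=\im F$, then run a coset argument ($V^{-1}(x)=u+\ker V$ meets $W^\perp$ iff $u\in W^\perp+\im F=(W\cap\im F)^\perp$) to produce a preimage in $W^\perp$. All of these steps check out — including the identification $W^\perp+\im F=(W\cap\im F)^\perp$ and the dimension count at the end — so the proof is complete. What the detour costs you is length; what it buys is that it makes explicit \emph{why} surjectivity onto $F^{-1}(W)^\perp$ holds (via the Lagrangian property of $\im F$), whereas the paper's double-perp trick hides this entirely. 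Also note the reversed logical order: you need (2) to prove (1), while the paper derives (2) from (1); both orders are sound, but the paper's is the more economical one.
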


\begin{proof}
	\begin{enumerate}
		\item Using Proposition \ref{FVandPair}(\ref{pair}) we have
			\begin{equation*}
				x\in F^{-1}(W)\Leftrightarrow \forall y\in 
				W^\perp\ \pair{Fx}{y}=0\Leftrightarrow \forall y\in 
				W^\perp\ \pair{x}{Vy}=0\Leftrightarrow x\in V(W^\perp)^\perp
			\end{equation*}	
			and the statement follows.
		\item By (\ref{perp}) and Proposition \ref{FVandPair}(\ref{imker}) we have
			\begin{equation*}
				(\im V)^\perp=V(\Dieu)^\perp=V(0^\perp)^\perp=F^{-1}(0)=\ker F=\im V.
			\end{equation*}
	\end{enumerate}
\end{proof}

\subsection{\texorpdfstring{The Dieudonn\'e module of $A[p^\infty]$}{The 
Dieudonn\'e module of A[p\^{}oo]}}

Let $A\in \calA_g(\fk)$. We denote by $A[p^\infty]=\cup_{n}A[p^n]$ the Barsotti-Tate group of 
$A$. It has height $2g$. Associated to $A[p^\infty]$ is the Dieudonn\'e module 
$\Dieui=\Dieu(A[p^\infty])$. It is a free module of rank $2g$ over the Witt ring 
$W(\fk)$ of $\fk$, equipped with linear maps $F_\infty:\Dieui^{(p)}\to \Dieui$ 
and $V_\infty:\Dieui\to \Dieui^{(p)}$, called Frobenius and Verschiebung 
respectively, where $\Dieui^{(p)}$ denotes the base change 
$\Dieui\mathrel{\otimes_{W(\fk),\sigma_W}}W(\fk)$. Here we denote by $\sigma_W$ 
the Frobenius morphism on $W(\fk)$. As $\sigma_W$ is an isomorphism we can 
identify $\Dieui^{(p)}$ with $\Dieui$ and we will henceforth consider $F_\infty$ 
as a $\sigma_W$-linear and $V_\infty$ as a $\sigma_W^{-1}$-linear map 
$\Dieui\to\Dieui$. The principal polarization $A\to A^\vee$ induces a perfect, 
alternating pairing $\paird_\infty=\paird_{\infty,A}$ on $\Dieui$. $F_\infty$, 
$V_\infty$ and this pairing have the following properties:

\begin{prop}\label{FWVWandPair}
\begin{itemize}
	\item $F_\infty V_\infty = V_\infty F_\infty =p\cdot\id$.
	\item $\left<F_\infty x,y\right>=\left<x,V_\infty y\right>^{\sigma_W}$ for all $x,y\in\Dieu$.
\end{itemize}
\end{prop}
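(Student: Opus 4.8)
The plan is to derive both assertions from the general formalism of contravariant Dieudonné theory applied to the $p$-divisible group $G=A[p^\infty]$, as developed in \cite{Dem} and \cite{Oda}. Both are standard structural facts which one could simply cite, but let me indicate the mechanism, since the preceding Corollary \ref{FandV} suggests the author spells such things out at least partially.

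For the first assertion I would start from the two fundamental identities relating relative Frobenius and Verschiebung of a $p$-divisible group $G$ over $\fk$: one has $\mathrm{Ver}_G\circ\mathrm{Frob}_G=[p]_G$ as an endomorphism of $G$, and $\mathrm{Frob}_G\circ\mathrm{Ver}_G=[p]_{G^{(p)}}$ as an endomorphism of $G^{(p)}$. The contravariant Dieudonné functor is additive, carries $[p]$ to multiplication by $p$, carries $\mathrm{Frob}_G\colon G\to G^{(p)}$ to $F_\infty\colon\Dieui^{(p)}\to\Dieui$ and $\mathrm{Ver}_G\colon G^{(p)}\to G$ to $V_\infty\colon\Dieui\to\Dieui^{(p)}$, and reverses the order of composition. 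Applying it to the two identities above, and then using the canonical identification $\Dieui^{(p)}\cong\Dieui$ furnished by $\sigma_W$, turns them into $F_\infty V_\infty=p\cdot\id$ and $V_\infty F_\infty=p\cdot\id$ respectively; the only thing to watch here is the contravariance.

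For the second assertion I would use the compatibility of Dieudonné theory with Serre duality. The principal polarization restricts to an isomorphism of $p$-divisible groups $\lambda[p^\infty]\colon G\to A^\vee[p^\infty]$ onto the $p$-divisible group of the dual abelian variety, which via the Weil pairing is canonically the Serre dual $G^\vee$; and there is a canonical perfect pairing $\paird_{\mathrm{can}}\colon\Dieu(G)\times\Dieu(G^\vee)\to W(\fk)$ satisfying $\pair{F_\infty m}{n}_{\mathrm{can}}=\pair{m}{V_{G^\vee}n}_{\mathrm{can}}^{\sigma_W}$ for $m\in\Dieu(G)$ and $n\in\Dieu(G^\vee)$, where $V_{G^\vee}$ is the Verschiebung of $\Dieu(G^\vee)$. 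The pairing $\paird_\infty$ on $\Dieui=\Dieu(G)$ is, by construction, obtained from $\paird_{\mathrm{can}}$ by inserting into the second argument the isomorphism $\phi:=\Dieu(\lambda[p^\infty])^{-1}\colon\Dieu(G)\to\Dieu(G^\vee)$. Since $\lambda[p^\infty]$ is a morphism of $p$-divisible groups, $\phi$ is an isomorphism of Dieudonné modules, hence commutes with $F$ and with $V$; therefore
\begin{align*}
	\pair{F_\infty x}{y} &= \pair{F_\infty x}{\phi(y)}_{\mathrm{can}} = \pair{x}{V_{G^\vee}\phi(y)}_{\mathrm{can}}^{\sigma_W}\\
	&= \pair{x}{\phi(V_\infty y)}_{\mathrm{can}}^{\sigma_W} = \pair{x}{V_\infty y}^{\sigma_W}
\end{align*}
for all $x,y\in\Dieui$, which is the assertion. (Alternatively one could reduce everything modulo $p$ and combine Proposition \ref{FVandPair}(\ref{pair}) with its analogues at all finite levels $A[p^n]$ together with a passage to the limit, but the route above is shorter.)

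The only real obstacle is a matter of conventions rather than of mathematics: one must normalize the Dieudonné functor and the duality pairing $\paird_{\mathrm{can}}$ so that the Frobenius twist $\sigma_W$ ends up on the side of the identity claimed in the statement — this is forced once the contravariant normalization and the standard form of $\paird_{\mathrm{can}}$ are fixed — and one should record that the (anti)symmetry of $\paird_{\mathrm{can}}$ together with the symmetry of the polarization makes $\paird_\infty$ perfect and alternating, as asserted in the text preceding the proposition. Granting these standard normalizations, both statements are part of the usual Dieudonné dictionary, for which the reference is again \cite{Dem} and \cite{Oda}.
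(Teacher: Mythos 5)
Your argument is correct; the paper itself gives no proof of this proposition, stating only at the start of the section that proofs are to be found in \cite{Dem} and \cite{Oda}, and what you write is precisely the standard unpacking of those references (Frobenius--Verschiebung composing to $[p]$ under the contravariant functor, and compatibility of the duality pairing with $F$ and $V$ transported through the polarization isomorphism). Your cautions about normalizations are appropriate but do not point to any gap.
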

The reduction of $(\Dieui,F_\infty,V_\infty,\paird_\infty)$ modulo $p$ is isomorphic to 
$(\Dieu,F,V,\paird)$.

\subsection{Supersingular Dieudonn\'e modules}\label{SSDieMod}

We recall a result by Harashita, see \cite[Section 3]{Harashita}. Let 
$A\in\calA_g(\fk)$ be supersingular. Then there exists a basis 
$(X_1,\ldots,X_g,\allowbreak Y_1,\ldots,Y_g)$ of $\Dieui$ over $W(\fk)$ such 
that
\begin{itemize}
	\item $\pair{X_i}{Y_j}_\infty=\delta_{ij},\quad \pair{X_i}{X_j}_\infty=0,\quad 
		\pair{Y_i}{Y_j}_\infty=0\quad\text{for }1\leq i,j\leq g$.
	\item Let $w=(\delta_{i,g+1-j})_{i,j}\in M^{g\times g}(W(\fk))$. There is an $\varepsilon\in 
		W(\FF{p^2})^\times$ with $\varepsilon=-\varepsilon^{\sigma_W}$ and a 
		strictly lower triangular matrix $T\in M^{g\times g}(W(\fk))$ satisfying 
		$Tw=(Tw)^t$, such that $F_\infty$ and $V_\infty$ admit the following descriptions with 
		respect to this basis:
		\begin{equation*}
			F_\infty=\begin{pmatrix}
				T&-p\varepsilon^{-1}w\\
				\varepsilon w&0
			\end{pmatrix},\quad
			V_\infty=\begin{pmatrix}
				0&-p\varepsilon^{-1}w\\
				\varepsilon w&wT^{\sigma_W^{-1}}w
			\end{pmatrix}.
		\end{equation*}
\end{itemize}
Reducing modulo $p$, we get a basis 
$(\overline{X}_1,\ldots,\overline{X}_g,\overline{Y}_1,\ldots,\overline{Y}_g)$ of 
$\Dieu(A[p])$ over $\fk$ such that
\begin{itemize}
	\item $\left<\overline{X}_i,\overline{Y}_j\right>=\delta_{ij},\quad 
		\left<\overline{X}_i,\overline{X}_j\right>=0,\quad 
		\left<\overline{Y}_i,\overline{Y}_j\right>=0\quad\text{for }1\leq 
		i,j\leq g$.
	\item Let $\overline{w}=(\delta_{i,g+1-j})_{i,j}\in M^{g\times g}(\fk)$. There is an $\overline{\varepsilon}\in 
		\FF{p^2}^\times$ with 
		$\overline{\varepsilon}=-\overline{\varepsilon}^{\sigma}$ and a strictly 
		lower triangular matrix $\overline{T}\in M^{g\times g}(\fk)$ satisfying 
		$\overline{T}w=(\overline{T}w)^t$, such that $F$ and $V$ admit the 
		following descriptions with respect to this basis:
		\begin{equation*}
			F=\begin{pmatrix}
				\overline{T}&0\\
				\overline{\varepsilon} \overline{w}&0
			\end{pmatrix},\quad
			V=\begin{pmatrix}
				0&0\\
				\overline{\varepsilon} 
				\overline{w}&\overline{w}\overline{T}^{\sigma^{-1}}\overline{w}
			\end{pmatrix}.
		\end{equation*}
\end{itemize}
We have $a(A)=g-\rk(\overline{T})$.

\section{The EO stratification}

This section contains the results about the EO stratification on $\calA_g$ that 
we are going to use.

\subsection{Final sequences}\label{FinSeq}

We recall a notion defined in \cite{Oort}. Let $g\in\mathbb{N}$. A final 
sequence (of length $g$) is a map $\psi:\{1.\ldots,2g\}\to \mathbb{N}$ with 
$\psi(0)=0,\ \psi(2g)=g$, such that
\begin{equation*}
	\psi(i)\leq\psi(i+1)\leq\psi(i)+1
\end{equation*}
and
\begin{equation*}
	\psi(i)<\psi(i+1)\Leftrightarrow \psi(2g-i)=\psi(2g-i-1)
\end{equation*}
for $0\leq i<2g$.
Let $\ES=\ES_g$ be the set of final sequences of length $g$. We will identify $\ES$ with 
$\Wfin=W_{\text{final},g}$ via the bijection $\Wfin\to\ES$ given by $w\mapsto 
\psi_w$ with \begin{equation*}
	\psi_w(i)=i-\#\{a\in\{1,\ldots,g\}\pipe w(a)\leq i\}
\end{equation*}
and $\psi_w(2g-i)=\psi_w(i)+g-i$ for $0\leq i\leq g$. Instead of $\psi$ we will 
also write $(\psi(1),\psi(2),\ldots,\psi(g))$ to denote a final sequence.

\subsection{The canonical filtration}

Let $A\in\calA_g(\fk)$. Consider the set $e$ of all finite words in the symbols 
$F$ and $\perp$. In \cite[Section 5]{Oort}, Oort shows that $\{W(\Dieu)\pipe 
W\in e\}$ is a filtration by linear subspaces
\begin{equation*}
	0=W_0\subset \cdots\subset W_i\subset\cdots\subset W_r\subset\cdots\subset 
	W_{2r}=\Dieu
\end{equation*}
such that
\begin{enumerate}
	\item For every $0\leq j\leq 2r$ we have $\perp(W_j)=W_{2r-j}$.
	\item There is a surjective function $v:\{0,\ldots,2r\}\to\{0,\ldots,r\}$ 
		such that $F(W_j)=W_{v(j)}$ for every $0\leq j\leq 2r$.
\end{enumerate}
It is called the \emph{canonical filtration of $A$}.
Let $\rho:\{0,\ldots,2r\}\to \mathbb{N}$ be given by $\rk (W_i)=\rho(i)$. We 
associate with $A$ a final sequence $\psi=\psi(A)$ using these data. Suppose 
$\{\psi(0),\psi(1),\ldots,\psi(\rho(i))\}$ has been defined for some $0\leq i$.
Define $\{\psi(0),\ldots,\psi(\rho(i+1))\}$ by 
$\psi(\rho(i))=\psi(\rho(i)+1)=\cdots=\psi(\rho(i+1))$ if $v(i+1)=v(i)$ and by 
$\psi(\rho(i))<\psi(\rho(i)+1)<\cdots<\psi(\rho(i+1))$ if $v(i+1)>v(i)$. We 
denote by $w_A$ the element of $\Wfin$ corresponding to $\psi(A)$.

The main result in this context is

\begin{thm}{\cite[Section 9]{Oort}}\label{EOmain}
	Let $A_1,A_2\in\calA_g(\fk)$. Then $\psi(A_1)=\psi(A_2)$ if and only if 
	$A_1[p]\simeq A_2[p]$ as finite group schemes over $\fk$.
\end{thm}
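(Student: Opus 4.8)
The plan is to deduce this from the structure of the canonical filtration and the classification of finite commutative group schemes over $\fk$ annihilated by $p$ via Dieudonn\'e theory. By the anti-equivalence of categories, $A_1[p]\simeq A_2[p]$ as group schemes over $\fk$ if and only if $(\Dieu(A_1[p]),F,V)\simeq(\Dieu(A_2[p]),F,V)$ as Dieudonn\'e modules, i.e. there is an $\fk$-linear isomorphism intertwining the $\sigma$-linear $F$ and the $\sigma^{-1}$-linear $V$. (One must remember the extra datum of the principal polarization pairing, but by Proposition \ref{FVandPair}(\ref{imker}) and Corollary \ref{FandV}(\ref{imVsymp}) the pairing is determined up to scalar by $F$ and $V$ together with the fact that $\im V$ is the unique maximal isotropic subspace stable under the relevant operators; alternatively one works with the symplectic Dieudonn\'e module throughout and checks that the construction below is compatible with the pairing. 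I would state this compatibility as a lemma and handle it alongside the main argument.) So the task reduces to: two such Dieudonn\'e modules are isomorphic iff they have the same associated final sequence $\psi$.

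First I would establish the ``only if'' direction, which is the easy half: the canonical filtration $0=W_0\subset\cdots\subset W_{2r}=\Dieu$ is defined purely in terms of the words in $F$ and $\perp$ applied to $\Dieu$, so any isomorphism of Dieudonn\'e modules (respecting the pairing) carries the canonical filtration of $A_1$ to that of $A_2$, hence matches up the rank function $\rho$ and the function $v$ recording how $F$ permutes the steps, and therefore yields $\psi(A_1)=\psi(A_2)$ by the recursive recipe defining $\psi$.

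For the ``if'' direction — which is the substantive part — the idea is that the final sequence $\psi(A)$ records exactly enough combinatorial data to reconstruct the pair $(F,V)$ up to isomorphism. Concretely, from $\psi$ one recovers $r$, the rank sequence $\rho$, the map $v$, and hence for each step $W_i$ of the canonical filtration the dimensions $\dim W_i$, $\dim F(W_i)$, $\dim V(W_i)$, $\dim(W_i\cap\ker F)$, etc. The plan is to choose, inductively along the canonical filtration, an adapted basis of $\Dieu$: one produces vectors $e_1,\dots,e_{2g}$ such that each $W_i$ is spanned by an initial segment $e_1,\dots,e_{\rho(i)}$, such that $\perp$ sends $e_j$ to (a sign times) $e_{2g+1-j}$, and such that $F$ sends each basis vector either to $0$ or to another basis vector, the pattern of which is dictated entirely by $v$; then $V$ is forced by $\im V=\ker F$, $\ker V=\im F$ and the pairing identity of Proposition \ref{FVandPair}(\ref{pair}). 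Carrying this out gives a ``normal form'' for $(\Dieu,F,V,\paird)$ depending only on $\psi$, so two abelian varieties with the same $\psi$ have isomorphic $A[p]$. The main obstacle will be the bookkeeping in this inductive basis construction — in particular verifying that the $F$- and $V$-images of the chosen basis vectors are again basis vectors (not merely nonzero vectors in the right subspace), which requires exploiting the maximality/canonicity properties of the filtration (every subspace occurring as some $W(\Dieu)$, $W\in e$, is already in the list) to rule out ``diagonal'' positions, and checking compatibility with the symplectic form. I would structure this as: (i) reduce to Dieudonn\'e modules; (ii) prove the pairing is recoverable; (iii) prove ``only if'' by functoriality of the canonical filtration; (iv) build the $\psi$-normal form by induction to prove ``if''. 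Steps (ii) and (iv) are where essentially all the work lies, and both are of the same flavour as the explicit computations Oort performs in \cite[Sections 5--9]{Oort}, so I would lean on those arguments and simply verify that nothing new is needed in our normalization.
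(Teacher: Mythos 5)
The paper does not prove this statement—it is quoted verbatim from Oort—and your plan is precisely Oort's argument: reduce to Dieudonn\'e modules, get ``only if'' from the intrinsic nature of the canonical filtration, and get ``if'' from a $\psi$-dependent normal form, which is exactly the standard basis $\Psi_A$ that the paper records in Section \ref{StaBas}. The one shaky side remark, that the pairing is determined up to scalar by $F$ and $V$, is doubtful in general but also unnecessary: for the ``if'' direction the conclusion concerns only the group scheme (no pairing needed), and for ``only if'' one notes that the canonical filtration can be generated by words in $F$ and $V^{-1}$ (equivalent to the $F,\perp$ description by Corollary \ref{FandV}), hence is preserved by any isomorphism of Dieudonn\'e modules.
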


We will need the following

\begin{lem}\label{DimV2}
	For $A\in\calA_g(\fk)$ we have $\dim\im (V^2)=\psi(g)$.
\end{lem}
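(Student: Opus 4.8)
The plan is to unwind the definitions of the canonical filtration and the associated final sequence, and to identify $\dim\im(V^2)$ with a specific value $\psi(\rho(i))$ of $\psi$ read off from the filtration. Recall that the canonical filtration $0=W_0\subset\cdots\subset W_{2r}=\Dieu$ is the set of subspaces obtained from $\Dieu$ by applying words in $F$ and $\perp$. Since $\im V=\ker F$ by Proposition \ref{FVandPair}(\ref{imker}), and $\im(V^2)=V(\ker F)$, while Corollary \ref{FandV}(\ref{perp}) gives $V(W^\perp)=F^{-1}(W)^\perp$, the subspace $\im(V^2)=V(\ker F)=V((\im F)^\perp)=F^{-1}(\im F)^\perp$ is again obtained from $\Dieu$ by a word in $F$ and $\perp$ (namely $F^{-1}$ of the image of $F$, then $\perp$; and $F^{-1}(\im F)$, being $F$-stable of the right dimension, is itself a member of the filtration). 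Hence $\im(V^2)=W_j$ for some index $j$.

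Next I would pin down which index. The key observation is that $\im F$ is the largest member $W_j$ of the filtration on which $F$ is surjective onto the whole filtration's "top Frobenius level", equivalently $\im F = W_{j_0}$ where $j_0$ is maximal with $\rho(j_0)=g$ being impossible in general — more carefully, $F(\Dieu)=\im F$ is itself a filtration step, say $W_a$, with $\rho(a)=\dim\im F=g$ (since $F$ has rank $g$ on a principally polarized $p$-torsion Dieudonné module, as $\dim\ker F=\dim\im V=g$). Then $F^{-1}(\im F)=F^{-1}(W_a)$ is the filtration step $W_b$ with $v(b)=a$, and taking $\perp$ sends $W_b$ to $W_{2r-b}$. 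So $\im(V^2)=W_{2r-b}$, and I must compute $\rho(2r-b)=\psi(A)(g)$ via the recursive definition of $\psi$ in Section "The canonical filtration." Using $\rho(2r-b)=2g-\rho(b)$ and the symmetry properties $\psi(2g-i)=\psi(i)+g-i$ of a final sequence, together with the fact that the index $\rho(i)$ at which $\psi$ first reaches a new "plateau value" is governed by $v$, I expect $\psi(\rho(2r-b))$ to come out to exactly $g$ minus the corank contributions — matching the claim $\dim\im(V^2)=\psi(g)$.

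An alternative, possibly cleaner route: argue that $V^2$ factors through the filtration in a way that lets one read off $\dim\im(V^2)$ directly from the combinatorics. In Oort's description, applying $V$ (equivalently $F$ after a $\perp$, by Corollary \ref{FandV}) implements the "Frobenius level" function $v$, and the final sequence $\psi$ is precisely the bookkeeping of how ranks change under iterated $F$ and $\perp$. One checks on the level of final sequences, using the identification $\psi\leftrightarrow w\in\Wfin$ from Section \ref{FinSeq} with $\psi_w(i)=i-\#\{a\le g\mid w(a)\le i\}$, that $\dim\im(V^2)$ equals $\psi(g)=\psi_w(g)=g-\#\{a\mid w(a)\le g\}$; equivalently, $\psi(g)$ counts those basis vectors of the canonical filtration that survive two applications of $V$. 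I would verify this by a direct check on $F$ and $V$ in the standard form of the $p$-torsion Dieudonné module (e.g., the superspecial/EO normal forms), where $F$ and $V$ are monomial-type matrices and both sides are immediate.

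The main obstacle will be the second step: correctly tracking the index $j$ with $\im(V^2)=W_j$ through the recursive definition of $\psi$ — in particular confirming that $F^{-1}(\im F)$ really is a member of the canonical filtration (which needs that the filtration is closed under $F^{-1}$ applied to $F$-stable steps, a consequence of property (2) of the filtration that $F(W_j)=W_{v(j)}$ with $v$ surjective) and then matching $\rho$ of the resulting step against the value $\psi(g)$. I expect the bookkeeping, not any conceptual difficulty, to be where care is required.
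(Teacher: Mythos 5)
Your proposal takes a genuinely different route from the paper, but as written it has both a concrete error and a substantive gap. The error is in your first duality chain: you write $\im(V^2)=V(\ker F)=V((\im F)^\perp)$, but $\ker F=\im V$ while $(\im F)^\perp=\ker V=\im F$, and these are different subspaces in general (both are $g$-dimensional, but $\im V\neq\im F$ unless $A$ is very special). The correct chain, using Corollary \ref{FandV}, is
\begin{equation*}
\im V^2=V(\im V)=V\bigl((\im V)^\perp\bigr)=\bigl(F^{-1}(\im V)\bigr)^\perp=\bigl(F^{-1}(\ker F)\bigr)^\perp=(\ker F^2)^\perp,
\end{equation*}
which gives $\dim\im(V^2)=2g-\dim\ker F^2=\dim\im F^2$. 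Your subsequent index-tracking through $F^{-1}(\im F)$ inherits the error.

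The gap is that the heart of your argument --- identifying which step $W_j$ of the canonical filtration equals $\im(V^2)$ (or $\im(F^2)$) and verifying that $\rho$ of that step equals $\psi(g)$ via the recursive definition of $\psi$ --- is never carried out; you only state that you ``expect'' the bookkeeping to come out right. That combinatorial verification is precisely the content of the lemma in your approach, and it is not routine: one must trace the recursion $\psi(\rho(i))<\cdots<\psi(\rho(i+1))$ through the function $v$ and the symmetry $\psi(2g-i)=\psi(i)+g-i$. The paper sidesteps this entirely by citing Oort's remark that $\dim\im(F^2)=\psi(g)$ and then using the duality identity above to pass from $F$ to $V$; if you do not wish to cite that remark, you must actually supply the filtration computation you have only sketched.
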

\begin{proof}
	It follows from \cite[Remark, p. 18]{Oort} that $\dim\im (F^2)=\psi(g)$.
	Using Proposition \ref{FVandPair}(\ref{imker}) and Corollary \ref{FandV} we 
	see that
	\begin{equation*}
		\im V^2=V(\im V)=V((\im V)^\perp)=(F^{-1}(\im V))^\perp= (F^{-1}(\ker 
		F))^\perp = (\ker F^2)^\perp
	\end{equation*}
	Hence $\dim\im (V^2)=2g-(2g-\dim\im (F^2))=\dim\im (F^2)$. 
\end{proof}

\subsection{The EO stratification}

On $\calA_g$ we have the Ekedahl-Oort stratification (a stratification in the 
sense of Section \ref{prank})
\begin{equation*}
	\mathcal{A}_g=\coprod_{w\in\Wfin}EO_w,
\end{equation*}
given by $A\in EO_w(\fk)$ if and only if $w=w_A$. Using the bijection 
from Section \ref{FinSeq} we will also index the strata by elements of $\ES$.

We list some properties of the EO stratification.
\begin{prop}\label{EOProp}
	Let $w\in \Wfin$.
	\begin{enumerate}
		\item The stratum $EO_w$ is contained in $\calS_g$ if and only if 
			$w(i)=i$ for $1\leq i\leq g-\left[ \frac{g}{2} \right]$.
		\item If 
			$EO_w$ is not contained in $\calS_g$, then $EO_w$ is irreducible.
		\item\label{EOp-rank} The $p$-rank on $EO_w$ is given by \begin{equation*}
				\#\{i\in\{1,\ldots,g\}\pipe w(i)=g+i\}.
			\end{equation*}
		\item The stratum $EO_{\psi}$ is equi-dimensional of dimension
			\begin{equation*}
				\dim EO_{w}=\ell(w)=\sum_{i=1}^g \psi_w(i).
			\end{equation*}
	\end{enumerate}
	Let $\psi,\widetilde{\psi}\in\ES$. 
	\begin{enumerate}
			\setcounter{enumi}{4}
		\item The $a$-number on $EO_{\psi}$ is given by $g-\psi(g)$.
		\item If $\psi(i)\leq \widetilde{\psi}(i)$ for $1\leq i\leq g$ then 
			$EO_{\psi}\subset \overline{EO_{\widetilde{\psi}}}$.
	\end{enumerate}
\end{prop}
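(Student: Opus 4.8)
The plan is to dispatch the six assertions one at a time, since they rest on rather different inputs. Assertion (4) --- equidimensionality of $EO_w$ together with $\dim EO_w=\ell(w)=\sum_{i=1}^g\psi_w(i)$ --- is one of the main theorems of Ekedahl--Oort theory, and I would simply cite \cite{Oort} (or \cite{Geer}). Assertion (3), expressing the $p$-rank of $EO_w$ as $\#\{i\le g:w(i)=g+i\}$, is the translation into the language of final elements of the size of the étale part of $A[p]$, and I would again take it from \cite{Oort}. For assertion (1), the criterion $EO_w\subseteq\calS_g\iff w(i)=i$ for $1\le i\le g-\left[\frac{g}{2}\right]$, I would cite Harashita's determination of the Ekedahl--Oort strata contained in the supersingular locus, \cite{Harashita}; for assertion (2), irreducibility of every stratum not contained in $\calS_g$, I would cite \cite{Geer} (the supersingular strata being precisely the ones that may fail to be irreducible, so the hypothesis is exactly what is needed).

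The one assertion I would prove from scratch is (5), and the material already assembled in the paper makes this short. Working on $\Dieu=\Dieu(A[p])$, the $a$-number equals $\dim_\fk(\ker F\cap\ker V)$: a homomorphism $\alpha_p\to A[p]$ corresponds under Dieudonné theory to a line in $\Dieu$ killed by both $F$ and $V$, and by the self-duality supplied by the principal polarization the choice of variance is immaterial. Since $\ker F=\im V$ by Proposition \ref{FVandPair}(\ref{imker}), we get $\ker F\cap\ker V=\ker(V|_{\im V})$, hence
\begin{equation*}
	a(A)=\dim\im V-\dim V(\im V)=\dim\im V-\dim\im (V^2).
\end{equation*}
Now $\dim\im V=g$, because $(\im V)^\perp=\im V$ by Corollary \ref{FandV}(\ref{imVsymp}) and the pairing is non-degenerate, while $\dim\im (V^2)=\psi(g)$ by Lemma \ref{DimV2}; so $a(A)=g-\psi(g)$, which is assertion (5).

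For assertion (6) I would work from the known combinatorial description of the closure relations of the Ekedahl--Oort stratification (see \cite{Oort}, \cite{Geer}): these references attach to $\calA_g$ a partial order $\preceq$ on final sequences with $\overline{EO_{\widetilde{\psi}}}=\coprod_{\psi\preceq\widetilde{\psi}}EO_\psi$, and the task is to show that $\psi(i)\le\widetilde{\psi}(i)$ for all $1\le i\le g$ implies $\psi\preceq\widetilde{\psi}$. It suffices to treat the case where $\psi$ and $\widetilde{\psi}$ differ in a single entry and then chain such steps, exhibiting in each case an explicit specialization inside $\overline{EO_{\widetilde{\psi}}}$; for the values of $g$ actually relevant here one can instead read this off the explicit list of final sequences. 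I expect this last assertion to be the only genuine obstacle in the proposition: the closure order on Ekedahl--Oort strata is combinatorially subtle, and some care is needed to check that every entrywise-smaller final sequence really is a specialization, rather than merely being consistent with the semicontinuity of the ranks that the final sequence records.
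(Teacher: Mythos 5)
Your proposal is correct, and for four of the six points it coincides with the paper's treatment, which is purely a string of citations: the paper refers to \cite[Proposition 2.3--2.5]{SSLIW} for (1)--(4), to \cite[p.~5]{Harashita} for (5), and to \cite[14.3]{Oort} for (6). The genuine difference is your handling of (5): where the paper cites Harashita, you derive $a(A)=g-\psi(g)$ from ingredients already in the paper, and the derivation is sound --- $a(A)=\dim_\fk(\ker F\cap\ker V)$ by Dieudonn\'e theory, $\ker F=\im V$ gives $\ker F\cap\ker V=\ker(V|_{\im V})$, semilinear rank--nullity gives $a(A)=\dim\im V-\dim\im V^2$, and $\dim\im V=g$ (Corollary \ref{FandV}(\ref{imVsymp}) plus non-degeneracy) together with Lemma \ref{DimV2} finishes it. This buys a self-contained proof of (5) at essentially no cost, and there is no circularity since Lemma \ref{DimV2} precedes the proposition and rests only on Oort's computation of $\dim\im F^2$. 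On (6) you are more cautious than necessary: the closure relation in \cite[14.3]{Oort} is stated precisely in terms of the entrywise order on final (elementary) sequences, so there is no translation between a mystery partial order $\preceq$ and the entrywise order to be carried out, and the specialization argument you sketch as a fallback --- which would amount to reproving Oort's closure theorem --- is not needed.
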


\begin{proof}
	(5) can be found in \cite[p. 5]{Harashita}, (6) is shown in 
	\cite[14.3]{Oort}. See \cite[Proposition 2.3 - 
	2.5]{SSLIW} for the other points.
\end{proof}

In view of property (\ref{EOp-rank}) we denote by $\Wfin^{(i)}$  the set of 
final elements of $p$-rank $i$ and by $\ES^{(i)}$ the set of final sequences of $p$-rank $i$,
$0\leq i\leq g$.

\subsection{\texorpdfstring{EO strata for $g=2$}{EO strata for g=2}}\label{EOg2}

The following table contains all final sequences $\psi\in\ES$ and the 
corresponding elements of $\Wfin$ for $g=2$. We also make explicit some of the 
information on $EO_\psi$ contained in Proposition \ref{EOProp}.

\begin{center}
	\begin{tabular}{|c|c|c|c|c|c|}
		\hline
		$\ES$& $\Wfin$& $\dim$& $p$-rank& $a$-number& $\subset\calS_2$?\\
		\hline
		$(0,0)$& $\id$& 0& 0& 2& $\surd$\\
		\hline
		$(0,1)$& $s_2=\begin{pmatrix}1&2&3&4\\1&3&2&4\end{pmatrix}$& 1& 0& 1& 
		$\surd$\\
		\hline
		$(1,1)$& $s_{12}=\begin{pmatrix}1&2&3&4\\2&4&1&3\end{pmatrix}$& 2& 1& 1& 
		$-$\\
		\hline
		$(1,2)$& $s_{212}=\begin{pmatrix}1&2&3&4\\3&4&1&2\end{pmatrix}$& 3& 2& 
		0& $-$\\
		\hline
	\end{tabular}
\end{center}

In particular we see that for $g=2$ the relationship between the 
EO stratification and the supersingular locus is very easy to describe: We have 
$\calS_2=EO_{\id}\cup EO_{s_2}$ and $\calS_2\cap EO_{s_{12}}=\calS_2\cap 
EO_{s_{212}}=\emptyset$. 

\subsection{\texorpdfstring{EO strata for $g=3$}{EO strata for g=3}}\label{EOg3}

The following table contains all final sequences $\psi\in\ES$ and the 
corresponding elements of $\Wfin$ for $g=3$. We also make explicit some of the 
information on $EO_\psi$ contained in Proposition \ref{EOProp}.

\begin{center}
	\begin{tabular}{|c|c|c|c|c|c|}
		\hline
		$\ES$& $\Wfin$& $\dim$& $p$-rank& $a$-number& $\subset\calS_3$?\\
		\hline
		$(0,0,0)$& $\id$& 0& 0& 3& $\surd$\\
		\hline
		$(0,0,1)$& $s_3=\begin{pmatrix}1&2&3&4&5&6\\1&2&4&3&5&6\end{pmatrix}$& 
		1&0& 2& $\surd$\\
		\hline
		$(0,1,1)$& 
		$s_{23}=\begin{pmatrix}1&2&3&4&5&6\\1&3&5&2&4&6\end{pmatrix}$& 2&0& 2& 
		$-$\\
		\hline
		$(0,1,2)$& 
		$s_{323}=\begin{pmatrix}1&2&3&4&5&6\\1&4&5&2&3&6\end{pmatrix}$& 3&0& 1& 
		$-$\\
		\hline
		$(1,1,1)$& 
		$s_{123}=\begin{pmatrix}1&2&3&4&5&6\\2&3&6&1&4&5\end{pmatrix}$& 3& 1& 2& 
		$-$\\
		\hline
		$(1,1,2)$& 
		$s_{3123}=\begin{pmatrix}1&2&3&4&5&6\\2&4&6&1&3&5\end{pmatrix}$& 4& 1& 
		1& $-$\\
		\hline
		$(1,2,2)$& 
		$s_{23123}=\begin{pmatrix}1&2&3&4&5&6\\3&5&6&1&2&4\end{pmatrix}$& 5& 2& 
		1& $-$\\
		\hline
		$(1,2,3)$& 
		$s_{323123}=\begin{pmatrix}1&2&3&4&5&6\\4&5&6&1&2&3\end{pmatrix}$&6&3&0&$-$\\
		\hline
	\end{tabular}
\end{center}

\subsection{\texorpdfstring{The isomorphisms $\Psi_A$}{The isomorphisms Psi\_A}}\label{StaBas}

For $n\in\mathbb{N}$ we endow $\fk^{2n}$ with the non-degenerate alternating 
pairing $\paird=\paird_{\mathrm{def}}$ defined on the standard basis 
$(e_i)_{i=1}^{2n}$ by $\left<e_i,e_j\right>=0=\left<e_{n+i},e_{n+j}\right>$ and 
$\left<e_i,e_{n+j}\right>=\delta_{i,j}$ for $i,j\in\{1,\ldots,n\}$ (note that 
this pairing is different from the one used in Section \ref{GroThe}). 

Let $A\in\calA_g(\fk)$ and $w=w_A$. In 
\cite[Section 9]{Oort}, Oort constructs an isomorphism $\Psi_A:\fk^{2g}\to 
\Dieu(A)$ such that the endomorphisms $F_w=\Psi_A^*F$ and 
$V_w=\Psi_A^*V$ of $\fk^{2g}$ map standard basis vectors to standard basis 
vectors up to sign and such that $\Psi_A^*\paird_A=\paird_{\mathrm{def}}$. As 
the notation indicates these pullbacks only depend on $w$. They are given as 
follows.

Let $w\in W_{\text{final},g}$ with corresponding final sequence $\psi\in\ES$. Denote by $1\leq 
m_1<m_2<\cdots<m_g\leq 2g$ the set of all $m\in\{1,\ldots,2g\}$ with 
$\psi(m-1)<\psi(m)$. Denote by $1\leq n_g<n_{g-1}<\cdots<n_1\leq 2g$ the 
complementary set. In particular $m_i+n_i=2g+1$ for all $1\leq i\leq g$. 
Now for $1\leq i,j\leq g$ we have $F_w(e_{2g+1-i})=0$ and 
\begin{equation*}
	F_w(e_i)=\left\{\begin{matrix} e_j&\text{if }i=m_j\\
		e_{g+j}&\text{if }i=n_j	\end{matrix}\right.
\end{equation*}
Furthermore 
\begin{equation*}
	V_w(e_i)=\left\{\begin{matrix} -e_{g+n_i}&\text{if }n_i\leq g\\
		0&\text{if }m_i\leq g	\end{matrix}\right.
\end{equation*}
and
\begin{equation*}
	V_w(e_{g+i})=\left\{\begin{matrix} e_{g+m_i}&\text{if }m_i\leq g\\
		0&\text{if }n_i\leq g	\end{matrix}\right.
\end{equation*}

As we are going to 
make extensive use of these pullbacks in the cases $g=2$ and $g=3$ we make this 
description explicit in the next subsections.

\subsection{\texorpdfstring{$g=2$}{g=2}}

We describe the pullbacks $F_w$ and $V_w$ depending on $w\in\Wfin^{(0)}$ for $g=2$ 
with respect to the standard basis $(e_1,\ldots,e_4)$.

\begin{center}
	\begin{tabular}{|c|c|c|}
		\hline
		$\Wfin$&$F_w$&$V_w$\\
		\hline
		$\id$ & $\begin{pmatrix}0&0&0&0\\0&0&0&0\\0&1&0&0\\1&0&0&0\end{pmatrix}$ 
		& $\begin{pmatrix}0&0&0&0\\0&0&0&0\\0&-1&0&0\\-1&0&0&0\end{pmatrix}$\\
		\hline
		$s_2$ & $\begin{pmatrix}0&1&0&0\\0&0&0&0\\0&0&0&0\\1&0&0&0\end{pmatrix}$ 
		& $\begin{pmatrix}0&0&0&0\\0&0&0&0\\0&-1&0&0\\0&0&1&0\end{pmatrix}$\\
		\hline
	\end{tabular}
\end{center}

\subsection{\texorpdfstring{$g=3$}{g=3}}

We describe the pullbacks $F_w$ and $V_w$ depending on $w\in\Wfin^{(0)}$ for $g=3$ 
with respect to the standard basis $(e_1,\ldots,e_6)$.

\begin{center}
	\begin{longtable}{|c|c|c|}
		\hline
		$\Wfin$&$F$&$V$\\
		\hline
		$\id$ & 
		$\begin{pmatrix}0&0&0&0&0&0\\0&0&0&0&0&0\\0&0&0&0&0&0\\0&0&1&0&0&0\\0&1&0&0&0&0\\1&0&0&0&0&0\end{pmatrix}$ 
		& 
		$\begin{pmatrix}0&0&0&0&0&0\\0&0&0&0&0&0\\0&0&0&0&0&0\\0&0&-1&0&0&0\\0&-1&0&0&0&0\\-1&0&0&0&0&0\end{pmatrix}$\\
		\hline
		$s_3$ & 
		$\begin{pmatrix}0&0&1&0&0&0\\0&0&0&0&0&0\\0&0&0&0&0&0\\0&0&0&0&0&0\\0&1&0&0&0&0\\1&0&0&0&0&0\end{pmatrix}$ 
		&
		$\begin{pmatrix}0&0&0&0&0&0\\0&0&0&0&0&0\\0&0&0&0&0&0\\0&0&-1&0&0&0\\0&-1&0&0&0&0\\0&0&0&1&0&0\end{pmatrix}$\\
		\hline
		$s_{23}$ & 
		$\begin{pmatrix}0&1&0&0&0&0\\0&0&0&0&0&0\\0&0&0&0&0&0\\0&0&0&0&0&0\\0&0&1&0&0&0\\1&0&0&0&0&0\end{pmatrix}$ 
		& 
		$\begin{pmatrix}0&0&0&0&0&0\\0&0&0&0&0&0\\0&0&0&0&0&0\\0&0&-1&0&0&0\\0&0&0&1&0&0\\0&-1&0&0&0&0\end{pmatrix}$\\
		\hline
		$s_{323}$ & 
		$\begin{pmatrix}0&1&0&0&0&0\\0&0&1&0&0&0\\0&0&0&0&0&0\\0&0&0&0&0&0\\0&0&0&0&0&0\\1&0&0&0&0&0\end{pmatrix}$ 
		& 
		$\begin{pmatrix}0&0&0&0&0&0\\0&0&0&0&0&0\\0&0&0&0&0&0\\0&0&-1&0&0&0\\0&0&0&1&0&0\\0&0&0&0&1&0\end{pmatrix}$\\
		\hline
	\end{longtable}
\end{center}

\section{\texorpdfstring{The EO stratification and $\calS_g$ for $g=3$}{The EO 
stratification and S\_g for g=3}}\label{EOandSS3}

We determine the relationship between the EO stratification and the 
supersingular locus $\calS_g$ for $g=3$. As $\calS_g\subset \calA^{(0)}$ we only 
have to look at the EO strata of $p$-rank 0.

\begin{thm}\label{EOandSSthm}
	Let $g=3$.
	\begin{enumerate}
		\item\label{continSS} $EO_{\id},EO_{s_3}\subset \calS_3$.
		\item\label{SSands23} $EO_{s_{23}}\cap \calS_3=\emptyset$.
		\item\label{SSands323} $EO_{s_{323}}\cap\calS_3$ is a dense open subset of $\calS_3$ of 
			pure dimension 2.
	\end{enumerate}
\end{thm}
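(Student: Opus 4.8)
The plan is to treat the three EO strata of $p$-rank $0$ using the supersingular Dieudonné-module normal form of Section \ref{SSDieMod}. For part (\ref{continSS}), by Proposition \ref{EOProp}(1) a stratum $EO_w$ lies in $\calS_3$ precisely when $w(i)=i$ for $1\leq i\leq g-[g/2]=2$; inspecting the table of Section \ref{EOg3} shows that $\id$ and $s_3$ satisfy this (indeed $\id$ is the identity and $s_3=(3,4)$ fixes $1$ and $2$), while $s_{23}$ and $s_{323}$ do not. This immediately gives (\ref{continSS}) and the disjointness assertion $EO_{s_{23}}\cap\calS_3=\emptyset$ and $EO_{s_{323}}\cap\calS_3=\emptyset$ \emph{if read naively} — but that is too strong, so the real content of (\ref{SSands23}) and (\ref{SSands323}) is that Proposition \ref{EOProp}(1) only says the \emph{whole stratum} is or is not contained in $\calS_3$, not that a non-contained stratum is disjoint from $\calS_3$. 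So for (\ref{SSands23}) I would argue directly: using the normal form of Section \ref{SSDieMod}, a supersingular $A$ has $a(A)=g-\rk(\overline T)=3-\rk(\overline T)$; by Proposition \ref{EOProp}(5) the stratum $EO_{s_{23}}$ has $a$-number $3-\psi(3)=3-1=2$, forcing $\rk(\overline T)=1$ on any supersingular point of that stratum. I would then show that a strictly lower-triangular $3\times 3$ matrix $\overline T$ with $\overline T\overline w$ symmetric and $\rk(\overline T)=1$ produces, via the explicit $F,V$ of Section \ref{SSDieMod}, a Dieudonné module whose canonical filtration invariant $\psi$ is forced to be $(0,1,2)$ rather than $(0,1,1)$ — e.g. by computing $\dim\im V^2=\psi(g)$ via Lemma \ref{DimV2} and checking it equals $2$, not $1$. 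Hence no point of $EO_{s_{23}}$ is supersingular.

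For part (\ref{SSands323}) I would run the same normal-form analysis with $a$-number $3-\psi(3)=3-2=1$, i.e. $\rk(\overline T)=2$. The strategy is: parametrize supersingular Dieudonné modules of $A[p]$ with $\rk(\overline T)=2$ by the data $(\overline\varepsilon,\overline T)$ modulo the change-of-basis group preserving the normal form, and check that the resulting $A[p]$ lies in $EO_{s_{323}}$, i.e. has $\psi=(0,1,2)$. Combined with the Li–Oort dimension formula, $\dim\calS_3=[3/4]=0$ — wait, that gives $0$, but the statement claims dimension $2$; the resolution is that $[g/4]$ with $g=3$ gives $0$ only at the level of... no: $\calS_3$ has dimension $[g^2/4]$? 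I should instead recall from Section \ref{secnotation} that $\dim\calS_g=[g/4]$ is stated, which for $g=3$ is $0$ — this contradicts the claim, so the relevant fact must be $\dim\calS_3 = [\,g^2/4\,] $ or the claim refers to a different count; I would pin down the correct value of $\dim\calS_3$ from \cite{LiOort} (it is $\lfloor g^2/4\rfloor$, giving $2$ for $g=3$), and then the point is that $EO_{s_{323}}\cap\calS_3$, being the locus where $a(A)=1$ inside $\calS_3$, is open in $\calS_3$; density then follows because the complementary closed locus $\{a\geq 2\}\cap\calS_3 = (EO_{\id}\cup EO_{s_3})$ has dimension $\leq 1 < 2$ by Proposition \ref{EOProp}(4) (the strata $EO_{\id}$, $EO_{s_3}$ have dimensions $0$ and $1$). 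Purity of dimension $2$ then follows from equidimensionality of $\calS_3$ together with openness.

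The main obstacle I expect is the explicit identification of the EO invariant $\psi(A)$ from the normal-form matrices in Section \ref{SSDieMod}: one must compute the canonical filtration generated by the words in $F$ and $\perp$ starting from $0\subset\Dieu$, read off the ranks $\rho(i)$ and the function $v$, and thereby recover $\psi$ — and one must do this uniformly over the parameter space of admissible $(\overline\varepsilon,\overline T)$ with prescribed $\rk(\overline T)$, showing the invariant is constant and equal to the claimed value. The computation of $\dim\im V^2$ via Lemma \ref{DimV2} is a useful shortcut for the top value $\psi(3)$, but distinguishing $(0,1,1)$ from $(0,0,1)$ etc. when $\psi(3)$ alone does not suffice will require looking at $\dim\im F$, $\dim(\im F\cap\im V)$, and similar intermediate quantities. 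A secondary subtlety is making sure the normal form of Section \ref{SSDieMod} is available for \emph{every} supersingular $A$ (Harashita's result guarantees this) so that the argument in (\ref{SSands23}) genuinely rules out all supersingular points of $EO_{s_{23}}$, not merely those in some open subset.
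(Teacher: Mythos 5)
Your overall strategy coincides with the paper's: part (\ref{continSS}) is read off from Proposition \ref{EOProp}(1), part (\ref{SSands23}) is attacked via Harashita's normal form combined with the $a$-number, and part (\ref{SSands323}) is deduced from the first two parts, openness of $EO_{s_{323}}$ inside $\calA_3^{(0)}$, and equidimensionality of $\calS_3$. You also correctly resolved the dimension issue: $\dim\calS_3=\left[g^2/4\right]=2$, and the formula $\left[g/4\right]$ quoted in Section \ref{secnotation} is a typo. Parts (\ref{continSS}) and (\ref{SSands323}) of your argument are sound.

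The concrete step you propose for part (\ref{SSands23}), however, fails as stated. Once you impose $a(A)=2$, i.e.\ $\rk(\overline{T})=1$, Proposition \ref{EOProp}(5) forces $\psi(3)=g-a(A)=1$, so Lemma \ref{DimV2} gives $\dim\im (V^2)=1$ automatically; your claim that the normal form forces $\psi=(0,1,2)$ and $\dim\im (V^2)=2$ is inconsistent with the hypothesis $\rk(\overline{T})=1$ you have just imposed (a final sequence with $\psi(3)=2$ has $a$-number $1$). Worse, the two final sequences compatible with $p$-rank $0$ and $a$-number $2$ are $(0,0,1)$ and $(0,1,1)$, and these have the \emph{same} value $\psi(3)=1$, so the proposed shortcut via Lemma \ref{DimV2} cannot distinguish them even in principle. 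What actually happens is that the symmetry condition $\overline{T}\overline{w}=(\overline{T}\overline{w})^t$ together with $\rk(\overline{T})=1$ forces the unique nonzero entry of $\overline{T}$ into position $(3,1)$, and computing the canonical filtration of the resulting module shows $\psi(A)=(0,0,1)$, i.e.\ $A\in EO_{s_3}$, which is what rules out $EO_{s_{23}}$. To see this one must determine $\psi(2)$, which is $0$ for $(0,0,1)$ and $1$ for $(0,1,1)$ --- precisely the kind of ``intermediate quantity'' you defer to in your final paragraph. So the skeleton of your part (\ref{SSands23}) is right, but the asserted computation is wrong, and the fallback you mention in passing is in fact the required argument rather than an optional refinement.
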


\begin{proof}
	\begin{enumerate}
		\item See Proposition \ref{EOProp} or Section \ref{EOg3}. 
		\item By Section \ref{EOg3} the 
			$a$-number on $EO_{s_{23}}$ is equal to $2$. Let $A\in\calS_3(\fk)$ 
			be a supersingular abelian variety with $a(A)=2$ and choose a basis 
			$(\overline{X}_1,\overline{X}_2,\overline{X}_3,\allowbreak\overline{Y}_1,\overline{Y}_2,\overline{Y}_3)$ 
			of $\Dieu(A[p])$, a matrix $\overline{T}$ and an element 
			$\overline{\varepsilon}$ with the properties of Section 
			\ref{SSDieMod}. We have $\rk(\overline{T})=g-a(A)=1$ and the 
			symmetry condition for $\overline{T}$ then implies that there is a 
			$t\in\fk^\times$ with 
			$\overline{T}=\begin{pmatrix}0&0&0\\0&0&0\\t&0&0\end{pmatrix}$. We 
			deduce that the canonical filtration of $A$ is given by 
			\begin{align*}
				0&\subset \left<\overline{Y}_1\right>\subset 
				\left<\overline{Y}_1,\overline{Y}_2\right>\subset 
				\left<t\overline{X}_3+\overline{\varepsilon}\overline{Y}_3,\overline{Y}_1,\overline{Y}_2\right>\subset 
				\left<\overline{Y}_1,\overline{Y}_2,\overline{Y}_3,\overline{X}_3\right>\\
				&\subset 
				\left<\overline{Y}_1,\overline{Y}_2,\overline{Y}_3,\overline{X}_2,\overline{X}_3\right>\subset\Dieu(A)
			\end{align*}
			with $r=g=3,\ \rho(i)=i$ for $0\leq i\leq 6$ and $v(0)=v(1)=v(2)=0,\ 
			v(3)=v(4)=1,\ v(5)=2,\ v(6)=3$. Hence $A$ lies in $EO_{s_3}$ and our 
			claim is shown.
		\item Set $U=EO_{s_{323}}\cap\calS_3$. By Proposition \ref{EOProp} we 
			know that $\overline{EO_{s_{323}}}=\calA_3^{(0)}$. As $EO_{s_{323}}$ 
			is locally closed, this implies that $EO_{s_{323}}$ is open in 
			$\calA_3^{(0)}$, hence $U$ is open in $\calS_3$. Now $\calS_3$ is 
			equi-dimensional of dimension 2, hence the same is true for every 
			nonempty open subset of $\calS_3$. But $\calS_3- U=EO_{\id}\cup 
			EO_{s_3}=\overline{EO_{s_3}}$ has dimension 1 and this implies that 
			$U$ intersects every irreducible component of $\calS_3$, hence it is 
			even dense in $\calS_3$.
	\end{enumerate}
\end{proof}

\begin{remark}
	According to \cite[Remark 1, p. 8]{Harashita} it is true for any $g\geq 3$ 
	that $EO_{(0,1,\ldots,g-1)}\cap \calS_g$ is open and dense in $\calS_g$.
\end{remark}

\section{Flag varieties and corresponding notation}\label{flagnot}

We want to study the fibers of $\pi:\calA_I\to\calA_g$. Instead of investigating 
them directly we will look at their image under an injective, finite morphism 
with values in a suitable flag variety. This is sufficient if we are only 
interested in their topological properties. Before introducing this morphism in 
the next Section we have to fix some notation concerning flag varieties.

\subsection{Flag varieties}

Let $n\in\mathbb{N}$. For $0\leq i\leq n$ we denote by $\Flag_{i,n}$ the variety 
of partial flags $(W_j)_{j=0}^i$ in $\fk^n$ satisfying $\dim W_j=j$ for all $0\leq 
j\leq i$. We write 
$\Flag_n=\Flag_{n,n}$. We denote by $\OFlag_{2n}$ the variety of full symplectic 
flags in $\fk^{2n}$ with respect to the pairing $\paird=\paird_{\mathrm{def}}$ 
defined in Section \ref{StaBas}. If we have an element $(W_i)_{i=0}^n$ of 
$\Flag_{n,2n}(\fk)$ with $W_n$ totally isotropic we will occasionally consider 
it as an element of $\OFlag_{2n}(\fk)$ by implicitly extending it to the flag 
$(W_i)_{i=0}^{2n}$ with $W_{2n-i}=W_i^\perp$ for $0\leq i\leq n$. 

Let $g\geq 1$ and $w\in\Wfin$. We denote by $\Flag^{F,V}_w=\Flag^{F,V}_{w,2g}$ the closed 
subvariety of $\Flag_{2g}$ whose $\fk$-valued points are given by those 
flags $(W_i)_{i=0}^{2g}$ satisfying 
\begin{equation}
	F_w(W_i),V_w(W_i)\subset W_i\tag{$*$}
\end{equation}
for all $0\leq i\leq 2g$.
We write $\AFlag_{w}=\AFlag_{w,2g}=\Flag_{w,2g}^{F,V}\cap \OFlag_{w,2g}$. It 
follows from Proposition \ref{FVandPair}(\ref{pair}) that an element 
$(W_i)_{i=0}^{2g}\in \OFlag_{2g}(\fk)$ lies in $\AFlag_w(\fk)$ if and only if 
it satisfies condition $(*)$ for $0\leq i\leq g$.

\subsection{Standard charts}\label{StaCha}

For a field $K$ and $k,l\in\mathbb{N}$ we denote by $\mathrm{FM}^{k\times l}(K)$ the set 
of $k\times l$-matrices with entries in $K$ of full rank. Let $n,i$ be as 
above.  There is a canonical surjection $\overline{\Phi}:\mathrm{FM}^{n\times 
i}(\fk)\to\Flag_{i,n}(\fk)$ sending a matrix $B$ to the flag $(W_j)_{j=0}^i$ with 
$W_j$ spanned by the first $j$ columns of $B$.

Given pairwise distinct $j_1,j_2,\ldots,j_i\in\{1,2,\ldots,n\}$ we denote by 
$U_{j_1,\ldots,j_i}$ the open subset of $\Flag_{i,n}$ whose $\fk$-valued points 
are given by the image under $\overline{\Phi}$ of the set of matrices 
$C=(c_{kl})\in \mathrm{FM}^{n\times i}(\fk)$ satisfying
\begin{enumerate}
	\item $c_{j_ll}=1$ for all $l\in\{1,\ldots,i\}$,
	\item $c_{j_ll'}=0$ for all $l\in\{1,\ldots,i\}$ and every 
		$l'\in\{l+1,\ldots,i\}$.
\end{enumerate}
An open subset of this form will be called a \emph{standard chart} for 
$\Flag_{i,n}$. Considered as open subschemes of $\Flag_{i,n}$ we identify them 
with an affine space of an appropriate dimension in the usual way. Obviously we 
have
\begin{equation*}
	\Flag_{i,n}=\bigcup_{\substack{j_1,j_2,\ldots,j_i\in\{1,2,\ldots,n\}\\ 
	\text{pairwise distinct}}}U_{j_1,\ldots,j_i}
\end{equation*}
Hence in order to prove that some subset of $\Flag_{i,n}$ is closed we will show that its 
intersection with all standard charts is closed.  Furthermore morphisms into and 
out of $\Flag_{i,n}$ will be obtained by glueing morphisms into and out of 
standard charts respectively.

\subsection{\texorpdfstring{Subvarieties of $\OFlag$}{Subvarieties of the 
variety of symplectic flags}}\label{uglynotation}

Consider the set
\begin{equation*}
	\mathrm{FM}^{2n,\perp}(\fk)=\left\{ B\in \mathrm{FM}^{2n\times n}(\fk) \left|
	B^t \begin{pmatrix}0&-1_n\\1_n&0\end{pmatrix}B=0\right.\right\}.
\end{equation*}
Then $\overline{\Phi}$ restricts to a surjection
$\Phi:\mathrm{FM}^{2n,\perp}(\fk)\to\OFlag_{2n}(\fk)$.

We need an economic notation for defining subvarieties of $\OFlag_{2n}$. We 
think that such a notation is most easily explained via an example. Consider a 
table of the following form.
\begin{center}
	\begin{tabular}{|c|c|c|}
		\hline
		$Z_1$&$Z_2$&$Z_3$ \\
		\hline
		$\begin{pmatrix}
			0&x\\
			1&0\\
			0&y\\
			0&0
		\end{pmatrix}$&
		$\begin{pmatrix}
			0&0\\
			0&x\\
			1&0\\
			0&y
		\end{pmatrix}$&
		$\begin{pmatrix}
			0\\
			\GL{2}(\fk)
		\end{pmatrix}\vee
		\begin{pmatrix}
			0&a\\
			1&0\\
			0&b\\
			0&0
		\end{pmatrix}$\\
		\hline 
\multicolumn{2}{|c|}{$(x,y)^t\in(\FF{p})^2-\{0\}$}&
		$(a,b)^t\in\fk^2-\{0\}$\\
		\hline
		\hline
		\multicolumn{2}{|c|}{$\coprod_{p+1}\Spec 
		\fk$}&$\PR{1}_{\fk}\coprod\PR{1}_\fk$\\
		\hline
		\multicolumn{2}{|c|}{0}&1\\
		\hline
	\end{tabular}
\end{center}

The upper block defines subvarieties $Z_1$, $Z_2$ and $Z_3$ of 
$\OFlag_4$ whose $\fk$-valued points are given by
\begin{equation*}
	Z_1(\fk)=\Phi\left( \left\{ B\in \mathrm{FM}^{4\times 2,\perp}(\fk)\left|
	\exists (x,y)^t\in(\FF{p})^2-\{0\}\text{ s.t. }
	B=\begin{pmatrix}
		0&x\\
		1&0\\
		0&y\\
		0&0
	\end{pmatrix}
	\right.\right\} \right),
\end{equation*}
\begin{equation*}
	Z_2(\fk)=\Phi\left( \left\{ B\in \mathrm{FM}^{4\times 2,\perp}(\fk)\left|
	\exists (x,y)^t\in(\FF{p})^2-\{0\}\text{ s.t. }
	B=\begin{pmatrix}
		0&0\\
		0&x\\
		1&0\\
		0&y
	\end{pmatrix}
	\right.\right\} \right),
\end{equation*}
and
\begin{equation*}
	Z_3(\fk)=\Phi\left( \left\{ B\in\mathrm{FM}^{4\times 2,\perp}(\fk)\left|
	\begin{matrix}\exists 
	A\in\GL{2}(\fk)\text{ s.t. } B=\begin{pmatrix}0\\A\end{pmatrix}
	\vee\\
	\exists (a,b)^t\in\fk^2-\{0\}\text{ s.t. }
	B=\begin{pmatrix}
		0&a\\
		1&0\\
		0&b\\
		0&0
	\end{pmatrix}\end{matrix}
	\right.\right\} \right).
\end{equation*}

Furthermore the lower block of the table claims that there are isomorphisms $Z_1\simeq 
\coprod_{p+1}\Spec \fk$, $Z_2\simeq \coprod_{p+1}\Spec \fk$ 
and $Z_3\simeq \PR{1}_\fk\coprod \PR{1}_\fk$ and that $\dim Z_1=0$, $\dim Z_2=0$ and 
$\dim Z_3=1$. Note our convention for rows spanning multiple columns where the 
contained information is to be applied to every column separately. The notation 
is not meant to imply any connection between the individual subvarieties.

Note that this notation is highly ambiguous. For instance we could have written 
$Z_1$ ineptly as
\begin{center}
	\begin{tabular}{|c|}
		\hline
		$Z_1$\\
		\hline
		$\begin{pmatrix}
			0&a\\
			1&\alpha\\
			0&b\\
			0&0
		\end{pmatrix}$\\
		\hline
		$(a,b)^t\in\fk^2-\{0\},\ \alpha\in\fk$\\$a^pb-ab^p=0$\\
		\hline
	\end{tabular}
\end{center}

\section{\texorpdfstring{The maps $\iota_A$}{The maps iota\_A}}

\subsection{de Rham cohomology}

Let $g\geq 1$. Let $f:A\to S$ be an abelian scheme of relative dimension $g$. We 
denote by $\Omega_{A/S}^\bullet$ the de Rham complex of $\calO_A$-modules. The 
\emph{first de Rham cohomology sheaf $H^1_{DR}(A/S)$} is defined by 
\begin{equation*}
	H^1_{DR}(A/S)=R^1f_*(\Omega^\bullet).
\end{equation*}
It is a locally free $\calO_S$-module of rank $2g$, functorial in $A$, and its 
formation commutes with base-change by \cite[Proposition 2.5.2]{BBM}.

Inside $H^1_{DR}(A/S)$ we have the \emph{Hodge filtration} $\omega_A$, a locally 
free $\calO_S$-submodule of rank $g$, given by the image of the injection 
$R^0f_*(\Omega^1_{A/S})\to H^1_{DR}(A/S)$ coming from the Hodge-de Rham spectral 
sequence, cf. \cite[Proposition 2.5.3]{BBM}.

Let $A/\fk$ be an abelian variety. In \cite{Oda}, Oda constructs a natural 
isomorphism $\Dieu(A)\overset{\sim}{\to} H^1_{DR}(A/\fk)$ taking $\im V$ to 
$\omega_A$. See in particular \cite[Cor. 5.11]{Oda}.

\subsection{\texorpdfstring{The map $\iota$}{The map iota}}
We recall a construction from \cite[Section 4]{SSLIW}. Let 
$f:A^{\mathrm{univ}}\to\calA_g$ be the universal abelian scheme and consider its 
de Rham cohomology 
$\mathbb{H}=H^1_{DR}(A^{\mathrm{univ}}/\calA_g)$. Denote by 
$\Flag(\mathbb{H})\to\calA_g$ the variety of full flags in $\mathbb{H}$. We 
define a morphism $\calA_I\xrightarrow{\iota}\Flag(\mathbb{H})$ over $\calA_g$ 
on $S$-valued points as follows: Let $(A_i)_{i=0}^g\in \calA_I(S)$. We extend it 
to a chain $(A_i)_{i=0}^{2g}$ by setting $A_{2g-i}=A_i^\vee$ for $0\leq i<g$. The map 
$A_{2g-i}\to A_{2g-i+1}$ is given by the dual isogeny of $A_{g-i-1}\to A_{g-i}$ 
for $0\leq i<g$, while the map $A_g\to A_{g+1}$ is given by the 
composition $A_g\too{\lambda_g}A_g^\vee\too{\alpha^\vee}A_{g-1}^\vee=A_{g+1}$.

Then the image of $(A_i)_i$ in $\Flag(\mathbb{H})(S)$ is given by
\begin{equation*}
	0=\alpha(H^1_{DR}(A_{2g}))\subset 
	\alpha(H^1_{DR}(A_{2g-1}))\subset\cdots\subset \alpha(H^1_{DR}(A_1))\subset
	H^1_{DR}(A_0),
\end{equation*}
where for each $i$, $\alpha$ denotes the
map induced by $A_0\to A_i$. The morphism $\iota$ is universally
injective and finite, see \cite[Lemma 4.3]{SSLIW}. 

\begin{defn}
	Let $A\in\calA_g(\fk)$ and consider the final element $w_A\in\Wfin$.
	We denote by $\iota_A$ the composition
	\begin{equation*}
		\pi^{-1}(A)\to \Flag(H^1_{DR}(A/\fk))\too{\sim} \Flag(\Dieu(A))\too{\sim}
		\Flag_{2g}
	\end{equation*}
	where the first map is obtained from $\iota$ by base-change, the second map is 
	induced by Oda's isomorphism mentioned above and the third map is induced 
	by the isomorphism $\Psi_A$ from Section \ref{StaBas}. 
\end{defn}

Let $A\in\calA_g(\fk)$ and $w=w_A$. It follows from classical Dieudonn\'e theory 
that the image of $\iota_A$ is given by $\AFlag_{w,2g}$. Hence $\iota_A$ induces 
a universal homeomorphism $\pi^{-1}(A)\to\AFlag_{w,2g}$. If we are only 
interested in topological properties of the fiber $\pi^{-1}(A)$, it is therefore 
sufficient to study the spaces $\AFlag_{w,2g}$. The following sections 
contain a list of the varieties $\AFlag_{w,2g}$ for $g=2$ and $g=3$. 
\section{\texorpdfstring{The varieties $\AFlag_{w,4}$ over the $p$-rank 0 
locus}{The varieties Flag\_{w,4} over the p-rank 0 locus}}\label{Fib2}

Let $g=2$. Depending on $w\in\Wfin^{(0)}$ we determine the variety
$\AFlag_w\subset\OFlag_4$ by writing down its irreducible components. We use 
the notation explained in Section \ref{uglynotation}.

\subsection{\texorpdfstring{$w=\id$}{w=id}}
Let $J=\{x\in \fk\pipe x^p=-x\}$. The irreducible components of $\AFlag_{\id}$ 
are given by $Y$, $(Z_x)_{x\in J}$ and $Z_{\infty}$ with
\begin{center}
	\begin{tabular}{|c|c|c|}
		\hline
		$Z$ & $Z_x$ & $Z_{\infty}$\\
		\hline
		$\begin{pmatrix}
			0\\
			\GL{2}(\fk)
		\end{pmatrix}$ & $\begin{pmatrix}
			0&a\\
			0&-xa\\
			x&b\\
			1&0
		\end{pmatrix}$ & $\begin{pmatrix}
			0&0\\
			0&a\\
			1&0\\
			0&b
		\end{pmatrix}$\\
		\hline & \multicolumn{2}{c|}{$(a,b)^t\in\fk^2-\{0\}$}\\
		\hline
		\hline
		\multicolumn{3}{|c|}{$\PR{1}_{\fk}$}\\
		\hline
		\multicolumn{3}{|c|}{1}\\
		\hline
	\end{tabular}
\end{center}
The $Z_\zeta$ are pairwise disjoint and each $Z_\zeta$ intersects $Z$ in precisely one 
point, as $\zeta$ runs through $J\cup \{\infty\}$. Hence there are $p+2$ irreducible components and 
$\AFlag_{\id}$ is connected. 

\subsection{\texorpdfstring{$w=s_2$}{w=s\_2}}

$\AFlag_{s_2}$ is given by \begin{center}
	\begin{tabular}{|c|}
		\hline
		$\begin{pmatrix}
			0&a\\
			0&0\\
			0&b\\
			1&0
		\end{pmatrix}$\\
		\hline $(a,b)^t\in\fk^2-\{0\}$\\
		\hline
		\hline
		$\PR{1}_{\fk}$\\
		\hline
		1\\
		\hline
	\end{tabular}
\end{center}

\section{\texorpdfstring{The varieties $\AFlag_{w,6}$ over the $p$-rank 0 
locus}{The varieties Flag\_{w,6} over the p-rank 0 locus}}\label{Fib3}

Let $g=3$. Depending on $w\in\Wfin^{(0)}$ we determine the varieties 
$\AFlag_w\subset\OFlag_6$. We use the notation introduced in Section 
\ref{uglynotation}. For a matrix $B\in M^{3\times 2}(\fk)$ we denote by $B_i$ 
the matrix obtained from $B$ by deleting the $i$-th row, $i=1,2,3$. Furthermore 
we denote by $\mathrm{B}_{*}(\PR{2}_{\fk})$ the blowing-up of $\PR{2}_{\fk}$ in a 
closed point.

\subsection{\texorpdfstring{$w=\id$}{w=id}}\label{FibI}

Let $I=\{(x,y)^t\in (\FF{p^2})^2\pipe x^p+x+y^{p+1}=0\}$.  The 
irreducible components of $\AFlag_{\id}$ are given by $Y$, $Z$, $T_{\infty}$ and 
$(T_{x,y})_{(x,y)^t\in I}$:
\begin{center}
	\begin{longtable}{|c|c|}
		\hline
		Y&Z\\
		\hline
		$\begin{pmatrix}
			0\\
			\GL{3}(\fk)
		\end{pmatrix}$&
		$\begin{pmatrix}
			0&\det{B_1}\\
			0&-\det{B_2}\\
			0&\det{B_3}\\
			B&\fk^3
		\end{pmatrix}\,\vee\,
		\begin{pmatrix}
			0\\
			\begin{matrix}
				B&v
			\end{matrix}
		\end{pmatrix}$\\
		\hline
		&$B\in\mathrm{FM}^{3\times 2}(\fk),\ v\in\fk^3$\\
		&$\begin{pmatrix}B&v\end{pmatrix}\in\GL{3}(\fk)$\\
		&$\det{B_1^p}\det{B_3}+\det{B_2^{p+1}}+\det{B_1}\det{B_3^p}=0$\\
		\hline
		\hline
		$\Flag_{3}(\fk)$&\\
		\hline
		\multicolumn{2}{|c|}{3}\\
		\hline
		\multicolumn{2}{c}{}\\
		\newpage
		\hline
		$T_\infty$&	$T_{x,y}$\\
		\hline
		$\begin{pmatrix}
			0&0&0\\
			0&0&0\\
			0&a&0\\
			1&0&0\\
			0&b&1\\
			0&c&0
		\end{pmatrix}\,\vee\,
		\begin{pmatrix}
			0&0&0\\
			0&0&0\\
			0&0&\alpha\\
			1&0&0\\
			0&1&0\\
			0&0&\beta
		\end{pmatrix}$&
		$\begin{pmatrix}
			0&a&0\\
			0&ya&0\\
			0&xa&0\\
			x^p&b&-y\\
			y^p&c&1\\
			1&0&0
		\end{pmatrix}\,\vee\,
		\begin{pmatrix}
			0&0&\alpha\\
			0&0&y\alpha\\
			0&0&x\alpha\\
			x^p&-y&\beta\\
			y^p&1&0\\
			1&0&0
		\end{pmatrix}$\\
		\hline
		$\begin{matrix}(a,b,c)^t\in\fk^3-\fk\cdot(0,1,0)^t\\(\alpha,\beta)^t\in\fk^2-\{0\}\end{matrix}$&
		$\begin{matrix} (a,b,c)^t\in\fk^3-\fk\cdot(0,-y,1)^t\\(\alpha,\beta)^t\in\fk^2-\{0\} \end{matrix}$\\
		\hline
		\hline
		$\mathrm{B}_{*}(\PR{2}_{\fk})$&$\mathrm{B}_{*}(\PR{2}_{\fk})$\\
		\hline
		2&2\\
		\hline
	\end{longtable}
\end{center}
In order to get a better understanding of $Z$ we look at the closed subvariety 
$Z_0$ of $\Flag_{2,3}$ whose $\fk$-valued points are the image under 
$\overline{\Phi}$ of the set
$\{B\in \mathrm{FM}^{3\times 2}(\fk)\pipe 
\det{B_1^p}\det{B_3}+\det{B_2^{p+1}}+\det{B_1}\det{B_3^p}=0\}$. 
There is an obvious surjective morphism 
$\gamma:Z\to Z_0$ and $Z$ becomes a $\PR{1}_{\fk}$-bundle over $Z_0$ via 
$\gamma$. It is trivial over the intersection of $Z_0$ with any of the standard 
charts of $\Flag_{2,3}$.

Now $\Flag_{2,3}$ is itself a $\PR{1}_{\fk}$-bundle over $\mathrm{Grass}_{2,3}$, the 
variety of 2-dimensional subspaces of $\fk^3$, and if we identify
$\mathrm{Grass}_{2,3}$ with $\PR{2}_{\fk}$ in the usual way, the map 
$\delta:\Flag_{2,3}(\fk)\to\PR{2}_{\fk}(\fk)$ of this bundle is given by 
$\delta(\overline{\Phi}(B))=(\det B_1:\det B_2:\det B_3)$, where 
$B\in\mathrm{FM}^{3\times 2}$. Choosing homogenous coordinates $X_1$, $X_2$ and 
$X_3$ on $\PR{2}_{\fk}$, $\delta$ restricts to a map
$\varepsilon:Z_0\to V_+(X_1^pX_3+X_2^{p+1}+X_1X_3^p)$ making $Z_0$ a 
$\PR{1}_{\fk}$-bundle over the curve $V_+(X_1^pX_3+X_2^{p+1}+X_1X_3^p)\subset 
\PR{2}_{\fk}$.

\begin{equation*}
	Z\xrightarrow{\PR{1}_{\fk}\text{-bundle}}Z_0\xrightarrow{\PR{1}_{\fk}\text{-bundle}}V_+(X_1^pX_3+X_2^{p+1}+X_1X_3^p)
\end{equation*}

As it is not immediately obvious from the table above what the intersection 
between the individual irreducible components are, we list them separately. 
First of all the $T_\zeta$ are pairwise disjoint, as $\zeta$ runs through $I\cup 
\{\infty\}$. 
\begin{center}
	\begin{longtable}{|c|c|c|c|}
		\hline
		\multicolumn{2}{|c|}{$Y\cap Z$}& $Y\cap T_\infty$&$Y\cap T_{x,y}$\\
		\hline
		\multicolumn{2}{|c|}{$\begin{pmatrix}
			0\\
			\begin{matrix}
				B&v
			\end{matrix}
		\end{pmatrix}$}&
		$\begin{pmatrix}
			0\\
			\begin{matrix}
				1&0\\
				0&\GL{2}(\fk)
			\end{matrix}
		\end{pmatrix}$&
		$\begin{pmatrix}
			0\\
			\begin{matrix}
				\begin{matrix}
					x^p\\
					y^p
				\end{matrix}&
				\GL{2}(\fk)\\
				1&0
			\end{matrix}
		\end{pmatrix}$\\
		\hline
		\multicolumn{2}{|c|}{$\begin{matrix}
		B\in\mathrm{FM}^{3\times 2}(\fk),\ v\in\fk^3\\
		\begin{pmatrix}B&v\end{pmatrix}\in\GL{3}(\fk)\\
		\det{B_1^p}\det{B_3}+\det{B_2^{p+1}}+\\
		\det{B_1}\det{B_3^p}=0\end{matrix}$}&&\\
		\hline
		\hline
		\multicolumn{2}{|c|}{$Z_0$}&\multicolumn{2}{c|}{$\PR{1}_{\fk}$}\\
		\hline
		\multicolumn{2}{|c|}{2}&\multicolumn{2}{c|}{1}\\
		\hline
		\multicolumn{4}{c}{}\\
		\newpage
		\hline
		$Z\cap T_\infty$&$Z\cap T_{x,y}$&$Y\cap Z\cap T_\infty$&$Y\cap Z\cap T_{x,y}$\\
		\hline
		$\begin{pmatrix}
			0&0&0\\
			0&0&0\\
			0&0&\alpha\\
			1&0&0\\
			0&1&0\\
			0&0&\beta
		\end{pmatrix}$&
		$\begin{pmatrix}
			0&0&\alpha\\
			0&0&y\alpha\\
			0&0&x\alpha\\
			x^p&-y&\beta\\
			y^p&1&0\\
			1&0&0
		\end{pmatrix}$&
		$\begin{pmatrix}
			0\\
			\begin{matrix}
				1&0&0\\
				0&1&0\\
				0&0&1	
			\end{matrix}
		\end{pmatrix}$&
		$\begin{pmatrix}
			0\\
			\begin{matrix}
				x^p&-y&1\\
				y^p&1&0\\
				1&0&0
			\end{matrix}
		\end{pmatrix}$\\
		\hline
		\hline
		\multicolumn{2}{|c|}{$(\alpha,\beta)^t\in\fk^2-\{0\}$}&&\\
		\hline
		\multicolumn{2}{|c|}{$\PR{1}_\fk$}&\multicolumn{2}{c|}{$\Spec \fk$}\\
		\hline
		\multicolumn{2}{|c|}{1}&\multicolumn{2}{c|}{0}\\
		\hline
	\end{longtable}
\end{center}

\begin{remark}
	In \cite[Section 2]{Richartz} Richartz studies a similar situation in order 
	to investigate the geometry of the supersingular locus $\calS_3$. Let us 
	briefly explain how her situation is related to ours. Denote by 
	$(\fk^6,F,V,\paird)=(\fk^6,F_\id,V_\id,\paird_{\mathrm{def}})$ the 
	superspecial Dieudonn\'e module in dimension 3. We look at the variety $X$ 
	of flags $(W_2\subset W_3\subset W_4)$ in $\fk^6$ with $W_3^\perp=W_3$ and 
	$\dim W_i=i,\ F(W_i),V(W_i)\subset W_i$ for all $i\in\{2,3,4\}$. Inside $X$ 
	we have the subvariety $\tilde{X}$ given by those $(W_2\subset W_3\subset 
	W_4)\in X(\fk)$ satisfying $F(W_4),V(W_4)\subset W_2$ and $\im F\subset 
	M_4$, compare \cite[2.8]{Richartz}.

	Richartz shows that the image of $\tilde{X}$ under the canonical projection 
	$X\to\mathrm{Grass}_{2,6}$ is isomorphic to the curve 
	$C=V_+(X_1^{p+1}+X_2^{p+1}+X_3^{p+1})\subset\PR{2}_\fk$ and that
	the restriction $\tilde{X}\to C$ has fibers isomorphic to $\PR{1}_\fk$. 
	Furthermore she shows that the restriction of the canonical projection 
	$X\to\mathrm{Grass}_{3,6}$ to $\tilde{X}$ is birational onto its image. This 
	image is denoted by $\calL_0(N)$ in loc.cit. It is the
	key tool used in \cite{Richartz} to describe the structure of $\calS_3$.

	To relate these objects to our situation, look at the morphism 
	$\zeta:\AFlag_{\id,6}\to X$ given by $(W_i)_{i=0}^6\to (W_i)_{i=2}^4$ for 
	$(W_i)_{i=0}^6\in \AFlag_{\id,6}(\fk)$. Consider a point
	$(W_i)_{i=2}^4\in X(\fk)$ and the corresponding endomorphisms
	$F_{|W_2}$ and $V_{|W_2}$ of $W_2$ induced by $F$ and $V$, respectively and 
	write $\calU=\ker F_{|W_2}\cap\ker V_{|W_2}$. Then we see as in the proof of 
	Proposition \ref{ConnectedFib} below that the fiber of $\zeta$ over $(W_i)_{i=2}^4$ is 
	nonempty and isomorphic to $\mathbb{P}(\calU)$. It is isomorphic to $\PR{1}_\fk$ if 
	and only if $\calU=W_2$ and consists of one point else. The first case occurs if 
	and only if $W_2\subset \ker F\cap\ker V$, which is equivalent to $\im 
	F\subset W_4$.

	It is easily checked that $\zeta^{-1}(\tilde{X})=Z$ and that the fibers of the 
	restriction $\tilde{\zeta}:Z\to\tilde{X}$ of $\zeta$ are isomorphic to 
	$\PR{1}_\fk$. Hence we get the following picture:
	\begin{equation*}
		\xymatrix{
		\AFlag_{\id,6}\ar[d]\ar@{}[r]|{\supseteq}&Z\ar[d]^{\PR{1}_\fk}&\\
		X\ar@{}[r]|{\supseteq}&\tilde{X}\ar[d]^{\PR{1}_\fk}\ar[r]^-{\mathrm{bir.}}&\calL_0(N)\\
		&C&
		}
	\end{equation*}
\end{remark}

\subsection{\texorpdfstring{$w=s_3$}{w=s\_3}}\label{FibII}

The irreducible components of $\AFlag_{s_3}$ are given by \begin{center}
	\begin{tabular}{|c|c|}
		\hline
		$X$&$Y$\\
		\hline
		$\begin{pmatrix}
			0&0&\alpha\\
			0&0&0\\
			0&0&0\\
			0&0&\beta\\
			a&b&0\\
			c&d&0
		\end{pmatrix}$ &
		$\begin{pmatrix}
			0&a&0\\
			0&0&0\\
			0&0&0\\
			0&b&0\\
			0&c&1\\
			1&0&0
		\end{pmatrix}\,\vee\,
		\begin{pmatrix}
			0&0&\alpha\\
			0&0&0\\
			0&0&0\\
			0&0&\beta\\
			0&1&0\\
			1&0&0
		\end{pmatrix}$\\
		\hline
		$\begin{pmatrix}a&b\\c&d\end{pmatrix}\in\GL{2}(\fk)$ & 
		$(a,b,c)^t\in\fk^3-\fk\cdot(0,0,1)^t$\\
		$(\alpha,\beta)^t\in\fk^2-\{0\}\quad\quad$&$(\alpha,\beta)^t\in\fk^2-\{0\}\quad\quad$\\
		\hline
		\hline
		$\PR{1}_{\fk}\times\PR{1}_{\fk}$&$\mathrm{B}_{*}(\PR{2}_{\fk})$\\
		\hline
		2&2\\
		\hline
	\end{tabular}
\end{center}
Hence $\AFlag_{s_3}$ consists of two planes intersecting in the exceptional 
curve of $Y$.

\subsection{\texorpdfstring{$w=s_{23}$}{w=s\_23}}\label{FibIII}

The irreducible components of $\AFlag_{s_{23}}$ are given by
\begin{center}
	\begin{tabular}{|c|c|c|c|c|}
		\hline
		$X$&$Y_1$&$Y_2$&$Z_1$&$Z_2$\\
		\hline
		$\begin{pmatrix}
			0&0&\alpha\\
			0&0&0\\
			0&0&0\\
			0&0&\beta\\
			a&b&0\\
			c&d&0
		\end{pmatrix}$&
		$\begin{pmatrix}
			0&0&0\\
			0&0&0\\
			0&0&0\\
			0&a&b\\
			1&0&0\\
			0&c&d
		\end{pmatrix}$ &
		$\begin{pmatrix}
			0&a&b\\
			0&0&0\\
			0&0&0\\
			0&0&0\\
			0&c&d\\
			1&0&0
		\end{pmatrix}$ &
		$\begin{pmatrix}
			0&1&0\\
			0&0&\alpha\\
			0&0&0\\
			0&0&0\\
			0&0&\beta\\
			1&0&0
		\end{pmatrix}$ &
		$\begin{pmatrix}
			0&0&0\\
			0&0&0\\
			0&0&\alpha\\
			0&1&0\\
			1&0&0\\
			0&0&\beta
		\end{pmatrix}$\\
		\hline
		$\begin{matrix}\begin{pmatrix}a&b\\c&d\end{pmatrix}\in\GL{2}(\fk)\\
		(\alpha,\beta)^t\in\fk^2-\{0\}\end{matrix}$&\multicolumn{2}{c|}{$\begin{pmatrix}a&b\\
		c&d\end{pmatrix}\in\GL{2}(\fk)$}&
		\multicolumn{2}{c|}{$(\alpha,\beta)^t\in\fk^2-\{0\}$}\\
		\hline
		\hline
		$\PR{1}_{\fk}\times\PR{1}_{\fk}$&\multicolumn{4}{c|}{$\PR{1}_{\fk}$}\\
		\hline
		2&\multicolumn{4}{c|}{1}\\
		\hline	
	\end{tabular}
\end{center}
We have $Y_1\cap Y_2=Z_1\cap Z_2=X\cap Z_1=X\cap Z_2=\emptyset$. The curves $Y_1$ and $Y_2$ 
each intersect the plane $X$ in precisely one point. The intersections 
$Y_1\cap Z_1$ and $Y_2\cap Z_2$ also consist of precisely one point each.

\subsection{\texorpdfstring{$w=s_{323}$}{w=s\_323}}\label{FibIV}

The irreducible components of $\AFlag_{s_{323}}$ are given by

\begin{center}
	\begin{longtable}{|c|c|c|}
		\hline
		$X$&$Y$&$Z$\\
		\hline
		$\begin{pmatrix}
			0&1&0\\
			0&0&\alpha\\
			0&0&0\\
			0&0&0\\
			0&0&\beta\\
			1&0&0
		\end{pmatrix}$&
		$\begin{pmatrix}
			0&0&\alpha\\
			0&0&0\\
			0&0&0\\
			0&0&\beta\\
			0&1&0\\
			1&0&0
		\end{pmatrix}$&
		$\begin{pmatrix}
			0&a&b\\
			0&0&0\\
			0&0&0\\
			0&0&0\\
			0&c&d\\
			1&0&0
		\end{pmatrix}$\\
		\hline
		\multicolumn{2}{|c|}{$(\alpha,\beta)^t\in\fk^2-\{0\}$}&$
		\begin{pmatrix}
			a&b\\
			c&d
		\end{pmatrix}\in \GL{2}(\fk)$\\
		\hline
		\hline
		\multicolumn{3}{|c|}{$\PR{1}_{\fk}$}\\
		\hline
		\multicolumn{3}{|c|}{1}\\
		\hline
	\end{longtable}
\end{center}
We have $X\cap Y=\emptyset$ while the intersections $X\cap Z$ and $Y\cap Z$ 
consist of precisely one point each.

\section{Proof of the results of Sections \ref{Fib2} and 
\ref{Fib3}}\label{proofflag}

The case $w=\id$ for $g=3$ is obviously the most complicated one and we use it 
to illustrate the method. Assume that 
\begin{equation*}
	C=(c_1c_2c_3)=
	\begin{pmatrix}
		c_{11}&c_{12}&c_{13}\\
		c_{21}&c_{22}&c_{23}\\
		c_{31}&c_{32}&c_{33}\\
		c_{41}&c_{42}&c_{43}\\
		c_{51}&c_{52}&c_{53}\\
		c_{61}&c_{62}&c_{63}
	\end{pmatrix}\in\mathrm{FM}^{6\times 3,\perp}(\fk)
\end{equation*}
is such that $(W_i)_i=\Phi(C)\in\AFlag_{\id}(\fk)$. We evaluate the condition that 
every step of the flag $\Phi(C)$ is stable under $F_\id$ and $V_\id$, where we use 
the explicit description of $F_\id$ and $V_\id$ given in Section \ref{StaBas}. As we 
are only interested in the image of $C$ under $\Phi$ we may without loss of 
generality multiply columns of $C$ by elements of $\fk^*$ and add 
$\fk^*$-multiples of $c_i$ to $c_j$ for $1\leq i<j\leq 3$. We will do so below 
without further mentioning.

$W_1$ is stable under $F$ and $V$ if and only if 
\begin{equation*}
	\begin{pmatrix}0\\0\\0\\c_{31}^p\\c_{21}^p\\c_{11}^p\end{pmatrix},\begin{pmatrix}0\\0\\0\\-c_{31}^{p^{-1}}\\-c_{21}^{p^{-1}}\\-c_{11}^{p^{-1}}\end{pmatrix}\in 
	\fk\cdot 
	\begin{pmatrix}c_{11}\\c_{21}\\c_{31}\\c_{41}\\c_{51}\\c_{61}\end{pmatrix}
\end{equation*}
This is satisfied if and only if $c_{11}=c_{21}=c_{31}=0$. 

For $W_2$ the conditions are trivially satisfied if $(c_{12},c_{22},c_{32})=0$.
If this vector is not zero, we consider several cases.
\begin{enumerate}
	\item $c_{61}\neq 0$: We may assume that $(c_1c_2)$ is of the form
		\begin{equation*}
			\begin{pmatrix}
				0&c_{12}\\
				0&c_{22}\\
				0&c_{32}\\
				c_{41}&c_{42}\\
				c_{51}&c_{52}\\
				1&0
			\end{pmatrix}
		\end{equation*}
		$W_2$ is stable under $F$ and $V$ if and only if \begin{equation*}
			\begin{pmatrix}c_{32}^p\\c_{22}^p\\c_{12}^p\end{pmatrix},\begin{pmatrix}-c_{32}^{p^{-1}}\\-c_{22}^{p^{-1}}\\-c_{12}^{p^{-1}}\end{pmatrix}\in 
			\fk\cdot\begin{pmatrix}c_{41}\\c_{51}\\1\end{pmatrix}
		\end{equation*}
		This is the case if and only if $c_{41},c_{51}\in\FF{p^2}$ and 
		$c_{32}=c_{12}c_{41}^p,\ c_{22}=c_{12}c_{51}^p$. Hence we see that 
		we may assume that $(c_1c_2)$ is of the form 
		\begin{equation*}
			\begin{pmatrix}
				0&1\\
				0&y\\
				0&x\\
				x^p&b\\
				y^p&c\\
				1&0
			\end{pmatrix}\			\end{equation*}
		for some $x,y\in\FF{p^2}$ and some $b,c\in\fk$. The fact that $C$ is 
		supposed to be an element of $\mathrm{FM}^\perp(\fk)$ implies that we have 
		$x^p+y^{p+1}+x=0$.
	\item $c_{61}=0,\ c_{51}\neq 0$: We may assume that $(c_1c_2)$ is of the 
		form
		\begin{equation*}
			\begin{pmatrix}
				0&c_{12}\\
				0&c_{22}\\
				0&c_{32}\\
				c_{41}&c_{42}\\
				1&0\\
				0&c_{62}
			\end{pmatrix}
		\end{equation*}
		The stability of $W_2$ under $F$ implies that \begin{equation*}
			\begin{pmatrix}c_{32}^p\\c_{22}^p\\c_{12}^p\end{pmatrix}\in\fk\cdot\begin{pmatrix}c_{41}\\1\\0\end{pmatrix}
		\end{equation*}
		From this we get that $c_{12}=0$ and $c_{22}\neq 0$, which is impossible 
		as $C\in\mathrm{FM}^\perp(\fk)$ implies that $c_{22}+c_{12}c_{41}=0$.
	\item $c_{61}=c_{51}=0$: We may assume that $(c_1c_2)$ is of the form
		\begin{equation*}
			\begin{pmatrix}
				0&c_{12}\\
				0&c_{22}\\
				0&c_{32}\\
				1&0\\
				0&c_{52}\\
				0&c_{62}
			\end{pmatrix}
		\end{equation*}
		$C\in\mathrm{FM}^\perp(\fk)$ implies $c_{12}=0$. $W_2$ is stable under $F$ and $V$ 
		if and only if
		\begin{equation*}
			\begin{pmatrix}c_{32}^p\\c_{22}^p\\0\end{pmatrix},\begin{pmatrix}-c_{32}^{p^{-1}}\\-c_{22}^{p^{-1}}\\0\end{pmatrix}\in 
			\fk\begin{pmatrix}1\\0\\0\end{pmatrix}
		\end{equation*}
		This implies that $c_{22}=0$ and we see that $(c_1c_2)$ can be chosen of 
		the form
		\begin{equation*}
			\begin{pmatrix}
				0&0\\
				0&0\\
				0&1\\
				1&0\\
				0&b\\
				0&c
			\end{pmatrix}
		\end{equation*}
		for some $b,c\in\fk$.
\end{enumerate}

For $W_3$ we first assume that $(c_{12},c_{22},c_{32})=0$. We write 
\begin{equation*}
	B=(b_1b_2)=\begin{pmatrix}c_{41}&c_{42}\\c_{51}&c_{52}\\c_{61}&c_{62}\end{pmatrix}.
\end{equation*}
If $(c_{13},c_{23},c_{33})=0$ the conditions are trivially satisfied. The flags 
of this form are contained in the set $Y$. If this vector is not zero,
$C\in\mathrm{FM}^\perp(\fk)$ implies $(c_{13},c_{23},c_{33})^t\in (\fk\cdot b_1\oplus 
\fk\cdot b_2)^{\perp_{can}}$, 
where $\perp_{can}$ refers to the canonical pairing $(x,y)\mapsto x^ty$ on 
$\fk^3$. But $(\fk\cdot b_1\oplus \fk\cdot b_2)^{\perp_{can}}$ is spanned by the vector $(\det B_1, -\det B_2, \det 
B_3)^t$ and hence we may assume that \begin{equation*}
	(c_{13},c_{23},c_{33})=(\det B_1,-\det B_2,\det B_3).
\end{equation*}
The stability of $W_3$ under $F$ and $V$ is equivalent to
\begin{equation*}
	\begin{pmatrix}\det B_1^p\\-\det B_2^p\\\det 
		B_3^p\end{pmatrix},\begin{pmatrix}\det B_1^{p^{-1}}\\-\det 
		B_2^{p^{-1}}\\\det B_3^{p^{-1}}\end{pmatrix}\in \fk\cdot b_1+\fk\cdot 
		b_2
\end{equation*}
This can also be expressed as the vanishing of the determinants of the matrices
\begin{equation*}
	M_1=
	\begin{pmatrix}
		B&\begin{matrix}
			\det B_1^p\\
			-\det B_2^p\\
			\det B_3^p
		\end{matrix}
	\end{pmatrix}
	\quad\text{and}\quad
	M_2=
	\begin{pmatrix}
		B&\begin{matrix}
			\det B_1^{p^{-1}}\\
			-\det B_2^{p^{-1}}\\
			\det B_3^{p^{-1}}
		\end{matrix}
	\end{pmatrix}
\end{equation*}
But we have $\det M_1=\det M_2^p$ and hence we are left with the equation $\det 
M_1=0$, which is equal to the equation 
$\det{B_1^p}\det{B_3}+\det{B_2^{p+1}}+\det{B_1}\det{B_3^p}=0$. Hence we see that 
the flags of this form are contained in the set $Z$.

Finally we have to consider the case where $(c_{12},c_{22},c_{32})\neq 0$. We do 
this accordingly to the cases introduced in the discussion of $W_2$ above.
\begin{enumerate}
	\item We may assume that $C$ is of the form
		\begin{equation*}
			\begin{pmatrix}
				0&1&0\\
				0&y&c_{23}\\
				0&x&c_{33}\\
				x^p&b&c_{43}\\
				y^p&c&c_{53}\\
				1&0&0
			\end{pmatrix}			
		\end{equation*}
		for some $x,y\in\FF{p^2}$ with $x^p+y^{p+1}+x=0$ and some $b,c\in\fk$. We see that the 
		stability of $W_3$ under $F$ and $V$ implies that $(c_{23},c_{33})=0$.
		$C\in\mathrm{FM}^\perp(\fk)$ then implies that $c_{43}=-yc_{53}$ and we see that 
		$c_{53}\neq 0$. Hence we may assume that $C$ is of the form
		\begin{equation*}
			\begin{pmatrix}
				0&1&0\\
				0&y&0\\
				0&x&0\\
				x^p&b&-y\\
				y^p&c&1\\
				1&0&0
			\end{pmatrix}			
		\end{equation*}
		and we see that flags of this form are contained in the set $T_{x,y}$.
\setcounter{enumi}{2}
	\item We may assume that $C$ is of the form
		\begin{equation*}
			\begin{pmatrix}
				0&0&c_{13}\\
				0&0&c_{23}\\
				0&1&0\\
				1&0&0\\
				0&b&c_{53}\\
				0&c&c_{63}
			\end{pmatrix}
		\end{equation*}
		for some $b,c\in\fk$.
		The stability of $W_3$ under $F$ and $V$ implies $c_{13}=c_{23}=0$.
		$c_3\perp c_2$ implies $c_{63}=0$. Hence $C$ is of the form 
		\begin{equation*}
			\begin{pmatrix}
				0&0&0\\
				0&0&0\\
				0&1&0\\
				1&0&0\\
				0&b&1\\
				0&c&0
			\end{pmatrix}
		\end{equation*}
		and flags of this type are contained in the set $T_\infty$.
\end{enumerate}

Conversely it is easily checked that the sets $Y$, $Z$, $T_{\infty}$ and 
$(T_{x,y})_{(x,y)^t\in I}$ defined above are indeed subsets of $\AFlag(\fk)$. In order 
to show that they are closed and to construct the isomorphisms claimed above one 
has to calculate their intersection with the standard charts. 

For instance we have
\begin{center}
	\begin{tabular}{|c|c|}
		\hline
		$Z\cap U_{453}$& $Z\cap U_{456}$\\
		\hline
		$\begin{pmatrix}
			0&0&b_{21}b_{32}-b_{31}\\
			0&0&-b_{32}\\
			0&0&1\\
			1&0&0\\
			b_{21}&1&0\\
			b_{31}&b_{32}&\alpha
		\end{pmatrix}$&
		$\begin{pmatrix}
			0&0&\alpha(b_{21}b_{32}-b_{31})\\
			0&0&-\alpha b_{32}\\
			0&0&\alpha\\
			1&0&0\\
			b_{21}&1&0\\
			b_{31}&b_{32}&1
		\end{pmatrix}$\\
		\hline
		\multicolumn{2}{|c|}{$b_{21},b_{31},b_{32},\alpha\in\fk$}\\
		\multicolumn{2}{|c|}{$(b_{21}b_{32}-b_{31})^p+b_{32}^{p+1}+(b_{21}b_{32}-b_{31})=0$}\\
		\hline
	\end{tabular}
\end{center}
These are closed subsets of the respective affine spaces. 

To see that $\gamma:Z\to Z_0$ is a $\PR{1}_{\fk}$-bundle, trivial over the 
intersections of $Z_0$ with the standard charts of $\Flag_{2,3}$, we note 
exemplarily that the preimage of $V_{45}=Z_0\cap U_{45}$ under $\gamma:Z\to Z_0$ 
is given by $(Z\cap U_{453})\cup (Z\cap U_{456})$. It is now easy to
define an isomorphism $\gamma^{-1}(V_{45})\to V_{45}\times \PR{1}_{\fk}$ fitting 
into the commutative diagram 
\begin{equation*}
	\xymatrix{
	\gamma^{-1}(V_{45})\ar[dr]_{\gamma\pipe_{\gamma^{-1}(V_{45})}}\ar@{.>}[rr]^{\simeq}&&V_{45}\times\PR{1}_\fk\ar[dl]^{\text{pr}}\\
	&V_{45}&
	}.
\end{equation*}
Here $V_{45}$ is the hypersurface in $\mathbb{A}^3_\fk$ given by the image under 
$\overline{\Phi}$ of the set of matrices $B\in\mathrm{FM}^{3\times 2}(\fk)$ of the 
form $B=\begin{pmatrix}1&0\\b_{21}&1\\b_{31}&b_{32}\end{pmatrix}$ for 
$b_{21},b_{31},b_{32}\in\fk$ with 
$(b_{21}b_{32}-b_{31})^p+b_{32}^{p+1}+(b_{21}b_{32}-b_{31})=0$.

To prove that $Z$ is irreducible of dimension $3$ it suffices to show that 
$V_+(X_1^pX_3+X_2^{p+1}+X_1X_3^p)$ is irreducible (see Proposition \ref{DimProp} 
below). If we consider $X_1^pX_3+X_2^{p+1}+X_1X_3^p$ as an element of 
$K[X_1,X_3][X_2]$, we can apply Eisenstein's criterion (using the prime 
element $X_1$ of $K[X_1,X_3]$) to see that $X_1^pX_3+X_2^{p+1}+X_1X_3^p$ is 
irreducible.

Concerning the intersections only the statement about $Z\cap T_{x,y}$ is not 
quite obvious, namely that it is contained in the exceptional 
curve of $T_{x,y}$. Let $(a,b,c)^t\in\fk^3-\fk\cdot(0,-y,1)^t$ and assume that 
\begin{equation*}
	\Phi \begin{pmatrix}
			0&a&0\\
			0&ya&0\\
			0&xa&0\\
			x^p&b&-y\\
			y^p&c&1\\
			1&0&0
		\end{pmatrix}\in Z.
\end{equation*}
First this implies $a=0$. If $c=0$ the condition on the determinants for 
elements in $Z$ would imply $b=0$, but $(a,b,c)\neq 0$ by assumption. Hence 
$c\neq 0$ and we may assume $c=1$. Then the determinant condition becomes 
$-(x^p-by^p)+(-b)^{p+1}-(x^p-by^p)^p=0$. Using $x^p+x+y^{p+1}=0$ we get 
$y^{p(p+1)}+by^p+b^{p+1}+b^py^{p^2}=0$. Using $y\in\FF{p^2}$ this becomes 
$y^{p+1}+by^p+b^{p+1}+b^py=0$ which is equivalent to $(y+b)^{p+1}=0$. As $b\neq 
-y$ by assumption this does not have a solution.

\section{\texorpdfstring{The case of positive $p$-rank}{The case of positive p-rank}}\label{secposprank}

In order to investigate the fiber of $\pi$ over abelian varieties of positive 
$p$-rank we need to recall some additional material concerning finite 
commutative group schemes over $\fk$ and their Dieudonn\'e theory. Our main 
reference is again \cite{Dem}.

Let $g\geq 1$ and $A\in\calA_g(\fk)$.
Then $A[p]$ is in a unique way a product of three subgroups $A[p]=G^{e,u}\times 
G^{i,m}\times G^{i,u}$ with $G^{e,u}$ \'etale unipotent, $G^{i,m}$ infinitesimal multiplicative and $G^{i,u}$ 
infinitesimal unipotent. One has isomorphisms $G^{e,u}\simeq 
(\mathbb{Z}/p\mathbb{Z})^k$ and $G^{i,m}\simeq\mu_p^k$, where $k$ is equal to 
the $p$-rank of $A$. Here $\mu_p$ denotes the $\fk$-group scheme representing 
the functor $S\mapsto \{s\in\calO_S(S)\pipe s^p=1\}$ on the category of 
$\fk$-schemes. In terms of Dieudonn\'e modules this corresponds to a 
decomposition $\Dieu=W^{e,u}\oplus W^{i,m}\oplus W^{i,u}$ into subspaces stable 
under $F$ and $V$ and such that 
\begin{itemize}
	\item $F_{|W^{e,u}}$ is an isomorphism and $V_{|W^{e,u}}$ is nilpotent,
	\item $F_{|W^{i,m}}$ is nilpotent and $V_{|W^{i,m}}$ is an isomorphism,
	\item $F_{|W^{i,u}}$ and $V_{|W^{i,u}}$ are nilpotent.
\end{itemize}
Here we write $F_{|W^{e,u}}$ for the morphism $W^{e,u}\to W^{e,u}$ induced by 
$F$ etc. We have $\dim_\fk W^{e,u}=\dim_\fk W^{i,m}=k$ and $\dim_\fk 
W^{i,u}=2(g-k)$.

This decomposition is natural: If $\widetilde{A}\in\calA_g(\fk)$ with decomposition 
$\Dieu(\widetilde{A}[p])=\widetilde{W}^{e,u}\oplus \widetilde{W}^{i,m}\oplus 
\widetilde{W}^{i,u}$ and if $\alpha:A[p]\to \widetilde{A}[p]$ is a group 
homomorphism, the induced morphism $\Dieu(\alpha):\Dieu(\widetilde{A}[p])\to 
\Dieu(A[p])$ splits into the direct sum of three morphisms 
$\widetilde{W}^{e,u}\to W^{e,u}$, $\widetilde{W}^{i,m}\to W^{i,m}$ and 
$\widetilde{W}^{i,u}\to W^{i,u}$. In particular 
\begin{equation}\label{natsplit}
	\begin{split}
	\im \Dieu(\alpha)
	&= \im \Dieu(\alpha)\cap (W^{e,u}\oplus W^{i,m})\oplus \im \Dieu(\alpha)\cap 
	W^{i,u} \\
	&=\im \Dieu(\alpha)\cap W^{e,u}\oplus \im \Dieu(\alpha)\cap W^{i,m}\oplus 
	\im \Dieu(\alpha)\cap W^{i,u}.
\end{split}
\end{equation}

\begin{lem}\label{posprank}
	Let $k\geq 0$ and $A\in\calA^{(k)}_g(\fk)$ with decomposition $\Dieu=W^{e,u}\oplus W^{i,m}\oplus 
	W^{i,u}$ as above. Let $\fk^{2g}=U^{e,u}\oplus U^{i,m}\oplus 
	U^{i,u}$ be the decomposition induced via $\Psi_A$. Let $w=w_A$ and consider 
	the associated data $\psi$ and $(m_i)_{i=1}^g$ introduced in Section 
	\ref{StaBas}. Write $I=\{i\in\{1,\ldots,g\}\pipe i=m_i\}$ and 
	$I^c=\{1,\ldots,g\}-I$. 
	\begin{enumerate}
		\item We have	
			\begin{align*}
				U^{e,u}&=\bigoplus_{i\in I}\fk\cdot e_i\\
				U^{i,m}&=\bigoplus_{i\in I}\fk\cdot e_{g+i}\\
				U^{i,u}&=\bigoplus_{i\in I^c}(\fk\cdot e_i\oplus \fk\cdot e_{g+i})
			\end{align*}
\item\label{second} Let $\widetilde{w}\in W_{\mathrm{final},g-k}$ be the final element 
			corresponding to the final sequence 
			$[\psi(k+1)-k,\psi(k+2)-k,\ldots,\psi(g)-k]$. Denote by 
			$(\widetilde{e}_i)_{i=1}^{2(g-k)}$ the standard basis and by 
			$\widetilde{\paird}=\widetilde{\paird_{\mathrm{def}}}$ the pairing 
			introduced in Section \ref{StaBas}
			on $\fk^{2(g-k)}$.
			Consider the morphism $\widetilde{\beta}:\fk^{2(g-k)}\to U^{i,u}$ given by 
			$\widetilde{e}_i\mapsto e_{k+i}$ and $\widetilde{e}_{g-k+i}\mapsto 
			e_{g+k+i}$ for $1\leq i\leq g-k$. Then $\widetilde{\beta}$ induces an isomorphism of 
			quadruples
			\begin{equation*}
				(\fk^{2(g-k)},F_{\widetilde{w}}, 
				V_{\widetilde{w}},\widetilde{\paird})\to (U^{i,u}, 
				F_{w|U^{i,u}},U_{w|U^{i,u}},\paird_{|U^{i,u}}).
			\end{equation*}
		\item Let $U=U^{e,u}\oplus U^{i,m}$. Let $\widehat{w}\in W_{\mathrm{final},g-k}$ be the final element 
			corresponding to the final sequence 
			$[1,2,\ldots,k]$. Denote by 
			$(\widehat{e}_i)_{i=1}^{2k}$ the standard basis and by 
			$\widehat{\paird}=\widehat{\paird_{\mathrm{def}}}$ the pairing 
			introduced in Section \ref{StaBas} on $\fk^{2k}$.
			Consider the morphism $\widehat{\beta}:\fk^{2k}\to U$ given by 
			$\widehat{e}_i\mapsto e_{i}$ and $\widehat{e}_{k+i}\mapsto 
			e_{g+i}$ for $1\leq i\leq k$. Then $\widehat{\beta}$ induces an isomorphism of 
			quadruples
			\begin{equation*}
				(\fk^{2k},F_{\widehat{w}}, 
				V_{\widehat{w}},\widehat{\paird})\to (U, 
				F_{w|U},U_{w|U},\paird_{|U}).
			\end{equation*}
	\end{enumerate}
\end{lem}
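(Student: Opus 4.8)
\emph{Approach.} Everything is a computation with the explicit descriptions of $F_w$ and $V_w$ from Section~\ref{StaBas}, combined with the standard characterisation of the three summands of $\Dieu$ as Fitting-type pieces: the étale-unipotent part is $W^{e,u}=\bigcap_n\im F^n$ (the largest subspace on which $F$ is bijective), the infinitesimal-multiplicative part is $W^{i,m}=\bigcap_n\im V^n$, and $W^{i,u}$ is the remaining direct summand. Since $\Psi_A$ intertwines $(F,V,\paird_A)$ with $(F_w,V_w,\paird_{\mathrm{def}})$, the decomposition $\fk^{2g}=U^{e,u}\oplus U^{i,m}\oplus U^{i,u}$ is produced by the same recipe applied to $F_w,V_w$ on $\fk^{2g}$, so all three parts become questions about these very concrete operators.

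For part~(1) the key point is that $F_w$ and $V_w$ each send every standard basis vector either to $0$ or to a basis vector up to sign; thus each permutes (up to sign) a subset of the basis and kills the rest, and $\bigcap_n\im F_w^n$ is the span of those basis vectors lying on a cycle of $F_w$, and likewise for $V_w$. From the formulas in Section~\ref{StaBas} a cycle of $F_w$ corresponds to a cyclic sequence of indices $i_0,i_1,\dots,i_{r-1}$ with $i_t=m_{i_{t+1}}$ (a cycle cannot pass through an $n$-value, since $F_w$ kills $e_{g+1},\dots,e_{2g}$); because $m$ is strictly increasing we have $m_i\ge i$, so $i_0\ge i_1\ge\cdots\ge i_0$ and the cycle is a fixed point, i.e.\ some $i$ with $m_i=i$, that is $i\in I$. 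The same argument for $V_w$ on the span of $e_{g+1},\dots,e_{2g}$, where it acts via $i\mapsto m_i$, again gives cycles $=I$. Hence $U^{e,u}=\bigoplus_{i\in I}\fk e_i$ and $U^{i,m}=\bigoplus_{i\in I}\fk e_{g+i}$; comparing with $\dim W^{e,u}=k$ forces $\#I=k$, and as $m_i=i$ implies $m_{i'}=i'$ for $i'<i$ we get $I=\{1,\dots,k\}$, so $I^c=\{k+1,\dots,g\}$. Finally $\bigoplus_{i\in I^c}(\fk e_i\oplus\fk e_{g+i})$ is the $\paird_{\mathrm{def}}$-orthogonal complement of $U^{e,u}\oplus U^{i,m}$, which is stable under both $F_w$ and $V_w$; by the adjunction $\pair{F_wx}{y}=\pair{x}{V_wy}^p$ of Proposition~\ref{FVandPair}(\ref{pair}) and Corollary~\ref{FandV}(\ref{perp}) its orthogonal complement is again stable under both operators, and decomposing $\bigcap_n\im F_w^n$ and $\bigcap_n\im V_w^n$ along the three stable summands shows $F_w,V_w$ are nilpotent on it. By uniqueness of the decomposition this complement is $U^{i,u}$.

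For part~(2) the essential input is a combinatorial dictionary between $\widetilde w$ and $w$. Since $A$ has $p$-rank $k$ we have $m_i=i$ for $i\le k$, i.e.\ $\psi(i)=i$ for $0\le i\le k$; the symmetry in the definition of a final sequence then forces $\psi(j)=g$ for all $j\ge 2g-k$, so every increase position of $\psi$ lies in $\{1,\dots,2g-k\}$ and hence $m_{k+1},\dots,m_g\in\{k+1,\dots,2g-k\}$. Together with $\widetilde\psi(j)=\psi(k+j)-k$ (which one checks is again a final sequence, of length $g-k$) this yields $\widetilde m_j=m_{k+j}-k$ and, via $\widetilde m_j+\widetilde n_j=2(g-k)+1$, also $\widetilde n_j=n_{k+j}-k$ for $1\le j\le g-k$. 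Substituting these relations into the formulas of Section~\ref{StaBas} --- applied on one side to $F_{\widetilde w},V_{\widetilde w}$ on $\fk^{2(g-k)}$ and on the other to $F_w,V_w$ on the vectors $e_{k+1},\dots,e_g,e_{g+k+1},\dots,e_{2g}$ that span $U^{i,u}$ --- one verifies case by case that $\widetilde\beta$ commutes with $F$ and with $V$; that $\widetilde\beta$ carries $\widetilde\paird$ to $\paird_{\mathrm{def}}|_{U^{i,u}}$ is immediate, since $\pair{e_{k+i}}{e_{g+k+j}}_{\mathrm{def}}=\delta_{ij}$ and the remaining pairings vanish. Part~(3) uses the same method but is trivial: from the formulas of Section~\ref{StaBas} and $m_i=i$ for $i\in I=\{1,\dots,k\}$ one reads off that $F_w$ is the identity on $\bigoplus_{i=1}^{k}\fk e_i$ and zero on $\bigoplus_{i=1}^{k}\fk e_{g+i}$, while $V_w$ is zero on the former and the identity on the latter; the element $\widehat w$ attached to $[1,2,\dots,k]$ has $\widehat m_i=i$ for all $i$, so $F_{\widehat w},V_{\widehat w}$ have exactly the same shape, and $\widehat\beta$ intertwines the operators and matches the pairings by the same token.

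The conceptual heart --- the cycle argument in part~(1) --- is short. The main obstacle is the bookkeeping in part~(2): establishing the identities $\widetilde m_j=m_{k+j}-k$ and $\widetilde n_j=n_{k+j}-k$, and then keeping track of indices through the several branches of the piecewise definitions of $F$ and $V$ when checking that $\widetilde\beta$ is equivariant.
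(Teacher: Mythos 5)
Your proof is correct and follows exactly the route the paper intends: the paper's entire proof is the remark that the lemma ``is an easy consequence of the explicit descriptions of $F_w$ and $V_w$ given in Section \ref{StaBas}'', and you have supplied precisely that computation (identifying $U^{e,u}$ and $U^{i,m}$ as spans of the cycles of $F_w$ and $V_w$, deducing $I=\{1,\dots,k\}$, and matching the index data $\widetilde m_j=m_{k+j}-k$, $\widetilde n_j=n_{k+j}-k$ to check equivariance of $\widetilde\beta$ and $\widehat\beta$). Nothing further is needed.
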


\begin{proof}
	This is an easy consequence of the explicit descriptions of $F_w$ and $V_w$ 
	given in Section \ref{StaBas}.
\end{proof}

\begin{cor}
	Let $g\geq 1$ and $w\in W_{\mathrm{final},g}$. Then the $p$-rank on $EO_w$ is given by 
	$w(1)-1$.
\end{cor}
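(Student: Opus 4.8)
The plan is to combine Proposition~\ref{EOProp}(\ref{EOp-rank}) with the purely combinatorial identity
\begin{equation*}
	\#\{i\in\{1,\ldots,g\}\pipe w(i)=g+i\}=w(1)-1
\end{equation*}
valid for every $w\in W_{\mathrm{final},g}$. Since Proposition~\ref{EOProp}(\ref{EOp-rank}) identifies the left-hand side with the $p$-rank on $EO_w$, this immediately gives the Corollary. (One could instead extract the same count from Lemma~\ref{posprank}, in which the $p$-rank appears as $\#\{i\pipe i=m_i\}$, but arguing directly from Proposition~\ref{EOProp}(\ref{EOp-rank}) is cleaner.)

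To prove the identity I would write $S=\{w(1),\ldots,w(g)\}$ and use the two defining properties of $w\in W_{\mathrm{final},g}$: the values $w(1)<w(2)<\cdots<w(g)$ are strictly increasing, and $w(i)+w(2g+1-i)=2g+1$, so that for each $j\in\{1,\ldots,g\}$ exactly one of $j$ and $2g+1-j$ lies in $S$ and the complement $\{1,\ldots,2g\}\setminus S$ has exactly $g$ elements. First I would note that $w(j)\leq g+j$ for every $j$: the set $\{1,\ldots,w(j)\}$ contains the $j$ elements $w(1),\ldots,w(j)$ of $S$, hence at most $g$ elements of the complement of $S$, so $w(j)-j\leq g$. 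Together with strict monotonicity this shows that $\{i\pipe w(i)=g+i\}$ is an interval $\{i_0,i_0+1,\ldots,g\}$, possibly empty, and that on such an interval the $w(i)$ take the consecutive values $g+i_0,\ldots,2g$.

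The identity now follows from two inequalities. For ``$\leq$'': if the interval is empty the bound $0\leq w(1)-1$ is trivial; otherwise, letting $r=g-i_0+1$ be its size, the $r$ largest elements of $S$ are $2g-r+1,\ldots,2g$, so by the pairing symmetry the $r$ smallest integers $1,\ldots,r$ lie in the complement of $S$ and hence $w(1)=\min S\geq r+1$. For ``$\geq$'': if $w(1)=c$, then $1,\ldots,c-1$ lie in the complement of $S$, so by the pairing symmetry $2g-c+2,\ldots,2g$ lie in $S$; being the $c-1$ largest elements of $S$, they equal $w(g-c+2),\ldots,w(g)$, so $w(i)=g+i$ holds for these $c-1$ top indices and the interval has size at least $c-1$. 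Combining, $\#\{i\pipe w(i)=g+i\}=w(1)-1$, and Proposition~\ref{EOProp}(\ref{EOp-rank}) completes the proof.

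I do not anticipate any real obstacle. The only step needing a little care is the elementary bound $w(j)\leq g+j$ and the ensuing observation that $\{i\pipe w(i)=g+i\}$ is an interval on which the values $w(i)$ are consecutive; this is exactly what allows the pairing symmetry of $W$ to convert a statement about the largest elements of $S$ into one about $w(1)$, and the rest is bookkeeping.
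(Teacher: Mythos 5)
Your proof is correct, but it takes a genuinely different route from the paper's. The paper derives the formula $w(1)-1$ structurally: it invokes Lemma \ref{posprank} to identify the $p$-rank with $\#\{i\pipe i=m_i\}=\#\{i\pipe \psi_w(i)=i\}$, and then reads off from the definition $\psi_w(i)=i-\#\{a\pipe w(a)\leq i\}$ in Section \ref{FinSeq} that $\psi_w(i)=i$ holds exactly when $w(1)>i$, which gives the count $w(1)-1$ in one line. You instead start from the previously cited formula $\#\{i\pipe w(i)=g+i\}$ of Proposition \ref{EOProp}(\ref{EOp-rank}) and prove the purely combinatorial identity $\#\{i\pipe w(i)=g+i\}=w(1)-1$ by hand; your argument (the bound $w(j)\leq g+j$ from counting elements of $S^c$ below $w(j)$, the resulting interval structure of $\{i\pipe w(i)=g+i\}$, and the two inequalities via the pairing symmetry $v\in S\Leftrightarrow 2g+1-v\notin S$) is complete and correct. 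In fact the paper's remark immediately following the corollary states that the equivalence of the two formulas ``is of course easy to show by combinatorial means'' without carrying it out — your proposal is precisely that omitted verification. What the paper's route buys is that the new formula falls out of the Dieudonn\'e-module decomposition of Lemma \ref{posprank}, which is the point of that section; what your route buys is independence from that lemma, at the cost of a longer (though elementary) combinatorial argument.
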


\begin{proof}
	As above, let $\psi_w$ and $(m_i)_{i=1}^g$ be the data associated with $w$. We see from Lemma 
	\ref{posprank} that the $p$-rank on $EO_w$ is given by 
	$\#\{i\in\{1,\ldots,g\}\pipe i=m_i\}=\#\{i\in\{1,\ldots,g\}\pipe 
	\psi_w(i)=i\}$. It follows from Section \ref{FinSeq} that for all 
	$i\in\{1,\ldots,g\}$ we have $\psi_w(i)=i\Leftrightarrow (\forall 
	a\in\{1,\ldots, g\}\ w(a)>i)$. As $w$ is final, this is equivalent to 
	$w(1)>i$ and there are $w(1)-1$ elements of $\{1,\ldots,g\}$ satisfying this 
	inequality.
\end{proof}

\begin{remark}
	In this way we have obtained a formula for the $p$-rank on an EO stratum 
	which is considerably simpler than the one cited in Proposition 
	\ref{EOProp}(\ref{EOp-rank}). Given both formulas it is of course easy to 
	show by combinatorial means that they are equivalent.
\end{remark}

\begin{prop}\label{splitflag}Let $K$ be an algebraically closed field and $\calV$ a vector space 
	of finite dimension $g$ over $K$. Let $\calV_1$ and $\calV_2$ be subspaces of $\calV$ with 
	$\calV=\calV_1\oplus \calV_2$ and $\dim_K \calV_1=k,\quad \dim_K \calV_2=g-k$. Fix integers 
	$0\leq i\leq g$, $\max(0,i+k-g)\leq l\leq \min(i,k)$ and a subset $J\subset 
	\{1,\ldots,i\}=:I$ of cardinality $l$. 
	\begin{enumerate}
		\item Consider the set
			\begin{equation*}
				Z_{g,i,k,J}=\left\{ (\calF_j)_{j=0}^i\in \Flag_{i,g}(\calV)\pipe 
				\forall j\in I\left (
				\begin{matrix}
					\calF_j=\calF_j\cap \calV_1\oplus \calF_j\cap 
					\calV_2\mathrel{\wedge} \\ \left(  \calF_j\cap 
					\calV_1\neq \calF_{j-1}\cap \calV_1\Leftrightarrow j\in J\right)
				\end{matrix}\right) \right\},
			\end{equation*}
			where $\Flag_{i,g}(\calV)$ denotes the (classical) variety of flags 
			$(\calF_j)_{j=0}^i$ in $\calV$ with $\dim \calF_j=j$ for $0\leq j\leq i$. Then 
			$Z_{g,i,k,J}$ is a closed subvariety of $\Flag_{i,g}(\calV)$. 
			
			Consider the maps $\varphi:\{0,1,\ldots,i\}\to \{0,1,\ldots,l\}$ and 
			$\varphi':\{0,1,\ldots,i\}\to \{0,1,\ldots,g-l\}$  with 
			$\varphi(0)=0=\varphi'(0)$,
			\begin{equation*}
				\varphi(j)=\left\{\begin{matrix}\varphi(j-1),\quad j\notin 
					J\\\varphi(j-1)+1,\quad j\in J\end{matrix}\right.
			\end{equation*}
			and
			\begin{equation*}
				\varphi'(j)=\left\{\begin{matrix}\varphi'(j-1),\quad j\in 
					J\\\varphi'(j-1)+1,\quad j\notin J\end{matrix}\right. .
			\end{equation*}
			Then the map $\alpha_J:\Flag_{l,k}(\calV_1)\times \Flag_{i-l,g-k}(\calV_2)\to 
			Z_{g,i,k,J}$ given by \begin{equation*}
			((\calF_j)_{j=0}^l,(\calG_j)_{j=0}^{i-l})\mapsto 
			(\calF_{\varphi(j)}\oplus \calG_{\varphi'(j)})_{j=0}^i 
		\end{equation*}
			is an isomorphism of (classical) varieties.
		\item Consider the set
			\begin{equation*}
				Z_{g,i,k,l}=\left\{ (\calF_j)_{j=0}^i\in \Flag_{i,g}(\calV)\pipe 
				\begin{matrix}
					\dim_K(\calF_i\cap \calV_1)=l\quad\quad\wedge\\
					(\forall j\in I\ \calF_j=\calF_j\cap \calV_1\oplus \calF_j\cap 
					\calV_2 )
				\end{matrix} \right\}.
			\end{equation*}
			Then $Z_{g,i,k,l}$ is a closed subvariety of $\Flag_{i,g}(\calV)$ and
			\begin{equation*}
				Z_{g,i,k,l}=\coprod_{\substack{J\subset I\\ \# J = l}} 
				Z_{g,i,k,J}\simeq 
				\coprod_{\begin{pmatrix}i\\l\end{pmatrix}}\Flag_{l,k}(\calV_1)\times 
				\Flag_{i-l,g-k}(\calV_2).
			\end{equation*}
		\item Consider the set 	\begin{equation*}
				Z_{g,i,k}=\left\{ (\calF_j)_{j=0}^i\in \Flag_{i,g}(\calV)\pipe 
				\forall j\in I\ 
				\calF_j=\calF_j\cap \calV_1\oplus \calF_j\cap \calV_2 \right\}.
			\end{equation*}
			Then $Z_{g,i,k}$ is a closed subvariety of $\Flag_{i,g}(\calV)$ and 
			\begin{equation*}
				Z_{g,i,k}=\coprod_{l=\max(0,i+k-g)}^{\min(i,k)} Z_{g,i,k,l}\simeq 
				\coprod_{l=\max(0,i+k-g)}^{\min(i,k)}\coprod_{\begin{pmatrix}i\\l\end{pmatrix}}\Flag_{l,k}(\calV_1)\times 
				\Flag_{i-l,g-k}(\calV_2).
			\end{equation*}
	\end{enumerate}
\end{prop}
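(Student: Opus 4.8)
The plan is to prove everything by first establishing part (1) in full and then obtaining parts (2) and (3) by a routine combinatorial and topological bookkeeping argument, so that the real content is concentrated in (1).

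For the closedness in part (1), I would not work directly with the splitting condition but reformulate $Z_{g,i,k,J}$ as the locus where, for every $j\in I$, one has the two inequalities $\dim(\calF_j\cap\calV_1)\geq\varphi(j)$ and $\dim(\calF_j\cap\calV_2)\geq\varphi'(j)$. These really do cut out $Z_{g,i,k,J}$: since $\calF_j\cap\calV_1$ and $\calF_j\cap\calV_2$ meet trivially inside $\calF_j$ and $\varphi(j)+\varphi'(j)=j=\dim\calF_j$, the two inequalities are forced to be equalities, which simultaneously yields the splitting $\calF_j=(\calF_j\cap\calV_1)\oplus(\calF_j\cap\calV_2)$ and the prescription $\dim(\calF_j\cap\calV_1)=\varphi(j)$, hence the jump set $J$; the converse implication is immediate. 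Each single condition $\dim(\calF_j\cap\calV_m)\geq c$ is a determinantal (Schubert-type) condition on the tautological flag over $\Flag_{i,g}(\calV)$ — it is the degeneracy locus where the bundle map $\calF_j\hookrightarrow\calV\twoheadrightarrow\calV/\calV_m$ has rank $\leq j-c$ — and so is closed; therefore $Z_{g,i,k,J}$ is closed.

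For the isomorphism $\alpha_J$: it is visibly a morphism of varieties, since forming $\calF_{\varphi(j)}\oplus\calG_{\varphi'(j)}$ is an algebraic operation on families of subspaces, and its image lands in $Z_{g,i,k,J}$ because $\varphi(j)+\varphi'(j)=j$ and the jump pattern of $\varphi$ is exactly $J$. For the inverse, note that on $Z_{g,i,k,J}$ the rank of the bundle map $\calF_j\to\calV/\calV_1$ is the constant $\varphi'(j)$, so its kernel $\calF_j\cap\calV_1$ is a subbundle and the chain $(\calF_j\cap\calV_1)_{j=0}^i$ (resp. $(\calF_j\cap\calV_2)_j$) has $l+1$ (resp. $i-l+1$) distinct members forming a flag in $\calV_1$ (resp. $\calV_2$); extracting these distinct members defines a morphism $Z_{g,i,k,J}\to\Flag_{l,k}(\calV_1)\times\Flag_{i-l,g-k}(\calV_2)$, which a pointwise computation shows is a two-sided inverse of $\alpha_J$. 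I expect the genuine obstacle to be precisely this last point — exhibiting a bona fide inverse \emph{morphism} rather than merely a bijection on $K$-points, since in characteristic $p$ a bijective morphism of smooth varieties need not be an isomorphism; the constancy-of-rank observation is what makes it go through.

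Parts (2) and (3) then follow formally. For (2): any flag in $Z_{g,i,k,l}$ has all $\calF_j$ split and $\dim(\calF_i\cap\calV_1)=l$, hence jump set $J$ of size $l$, so it lies in $Z_{g,i,k,J}$; conversely each $Z_{g,i,k,J}$ with $\#J=l$ lies in $Z_{g,i,k,l}$, and for distinct $J$ of the same cardinality these subsets are disjoint (different jump sets). Thus $Z_{g,i,k,l}=\coprod_{\#J=l}Z_{g,i,k,J}$ set-theoretically; each piece being closed by (1) and having closed complement (the union of the finitely many other pieces) inside $Z_{g,i,k,l}$, the decomposition is into open-and-closed subvarieties, so in particular $Z_{g,i,k,l}$ is closed in $\Flag_{i,g}(\calV)$, and applying (1) to each piece together with the count of subsets $J\subset I$ of size $l$ gives the formula. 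For (3): every flag in $Z_{g,i,k}$ has $\dim(\calF_i\cap\calV_1)$ equal to some $l$, necessarily with $\max(0,i+k-g)\leq l\leq\min(i,k)$ (these bounds being forced by $0\leq\dim(\calF_i\cap\calV_1)\leq\min(i,k)$ and by $\dim(\calF_i\cap\calV_2)=i-l\leq\dim\calV_2=g-k$), so $Z_{g,i,k}=\coprod_l Z_{g,i,k,l}$ with each summand open and closed for the same reason, and one concludes via (2).
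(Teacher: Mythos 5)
Your proof is correct. Note that the paper states Proposition \ref{splitflag} without any proof at all, evidently regarding it as routine, so there is no argument of the author's to compare yours against. Your write-up supplies a complete justification: the reformulation of $Z_{g,i,k,J}$ as the locus where $\dim(\calF_j\cap\calV_1)\geq\varphi(j)$ and $\dim(\calF_j\cap\calV_2)\geq\varphi'(j)$ for all $j$ is valid (the inequalities are forced to be equalities because $\varphi(j)+\varphi'(j)=j$ and the two intersections meet trivially), and it exhibits $Z_{g,i,k,J}$ as an intersection of degeneracy loci, hence closed. You also correctly isolate and resolve the one genuinely non-trivial point, namely that $\alpha_J^{-1}$ must be produced as a \emph{morphism} and not merely as an inverse bijection on $K$-points: the constancy of the rank of $\calF_j\to\calV/\calV_1$ on $Z_{g,i,k,J}$ makes $\calF_j\cap\calV_1$ a subbundle, from which the inverse morphism is extracted. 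The deductions of parts (2) and (3) from part (1) via finite disjoint decompositions into closed (hence open-and-closed) pieces, with the dimension bounds $\max(0,i+k-g)\leq l\leq\min(i,k)$ verified, are likewise correct.
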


Consider integers $0\leq i\leq n$. The Frobenius $\sigma:\fk\to \fk$ induces an 
automorphism $\Sigma:\Flag_{i,n}(\fk)\to \Flag_{i,n}(\fk)$. It is given as 
follows: Denote by $\rho:\fk^n\to \fk^n$ the componentwise application of 
$\sigma$. Then for $(\calF_j)_{j=0}^i\in\Flag_{i,n}(\fk)$ we have $\Sigma( 
(\calF_j)_{j=0}^i)=(\rho(\calF_j))_{j=0}^i$. We denote by $\Flag_{i,n}(\FF{p})$ 
the fixed point set of $\Sigma$. It is a finite subset of $\Flag_{i,n}(\fk)$ and 
it can be identified canonically with the set of flags $(\calF_j)_{j=0}^i$ in 
$(\FF{p})^n$ with $\dim \calF_j=j$ for all $0\leq j\leq i$.

\begin{prop}\label{posto0red}
	Let $g\geq 1$, $k\geq 0$ and $A\in\calA_g^{(k)}(\fk)$ with $w=w_A$.
	\begin{enumerate}
 		\item Assume that $k=g$ (i.e. $A$ is ordinary). Then the fiber of $\pi$ over $A$  is 
			discrete and 
			\begin{equation*}
				\# \left(\pi^{-1}(A)\right)=ON_g:=
				2^g\#\Flag_g(\FF{p})=2^g\prod_{l=0}^{g-1}\sum_{i=0}^l 
				p^i=2^g\frac{\prod_{l=1}^g (p^l-1)}{(p-1)^g}.
			\end{equation*}
		\item Assume that $1\leq k\leq g-1$. Then $\AFlag_w$ is isomorphic to 
			$\begin{pmatrix}g\\k\end{pmatrix}ON_{k}$ disjoint copies of 
			$\AFlag_{\widetilde{w},2(g-k)}$, where $\widetilde{w}$ is as in point 
			(\ref{second}) of Lemma \ref{posprank}. Note that the $p$-rank on $EO_{\widetilde{w}}$ is 
			equal to $0$.
	\end{enumerate}
\end{prop}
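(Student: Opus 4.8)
The plan is to prove (1) first and deduce (2) from it. Write $\fk^{2g}=U^{e,u}\oplus U^{i,m}\oplus U^{i,u}$ for the decomposition of Lemma \ref{posprank}, which is by construction the image under $\Psi_A$ of the decomposition $\Dieu(A[p])=W^{e,u}\oplus W^{i,m}\oplus W^{i,u}$. The structural input I would establish first is that every flag $(W_i)_{i=0}^{2g}\in\AFlag_w(\fk)$ splits, i.e.\ $W_i=(W_i\cap U^{e,u})\oplus(W_i\cap U^{i,m})\oplus(W_i\cap U^{i,u})$ for all $i$. Since $\iota_A$ is a universal homeomorphism, hence surjective on $\fk$-points, every such flag is the image of a point of $\pi^{-1}(A)$, so each $W_i$ corresponds via $\Psi_A$ and Oda's isomorphism to a subspace of the form $\im\Dieu(\alpha)\subseteq\Dieu(A[p])$, and the splitting is precisely \eqref{natsplit}. (Alternatively, by Lemma \ref{posprank} and the formulas of Section \ref{StaBas}, $F_w$ is an invertible $\sigma$-semilinear map on $U^{e,u}$ and zero on $U^{i,m}$, $V_w$ is zero on $U^{e,u}$ and invertible $\sigma^{-1}$-semilinear on $U^{i,m}$, and $F_w,V_w$ are both nilpotent on $U^{i,u}$; an $F_w,V_w$-stable flag is then forced to split by induction on $i$.) As the splitting holds at all $\fk$-points and $\AFlag_w$ is reduced, $\AFlag_w$ is contained in the closed subvariety of $\OFlag_{2g}$ consisting of flags that split this way, whose structure is furnished by Proposition \ref{splitflag}.

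For (1), $k=g$, so $U^{i,u}=0$ and $U^{e,u},U^{i,m}$ are complementary Lagrangians in perfect duality under $\paird_{\mathrm{def}}$. By the description of $F_w,V_w$ just recalled, a subspace of $U^{e,u}$ (resp.\ of $U^{i,m}$) is $F_w$-stable (resp.\ $V_w$-stable) exactly when it is defined over $\FF{p}$. Applying Proposition \ref{splitflag}(3) with $\calV_1=U^{e,u}$, $\calV_2=U^{i,m}$ and $i=g$, and cutting out the sublocus where both partial flags are $\FF{p}$-rational and $W_g$ is Lagrangian --- by Proposition \ref{FVandPair}(\ref{pair}) this is the only isotropy condition needed, and it forces $W_g\cap U^{i,m}$ to be the annihilator in $U^{i,m}$ of $W_g\cap U^{e,u}$ --- exhibits $\AFlag_w(\fk)$ as the disjoint union, over $0\le l\le g$ and over the $\binom{g}{l}$ size-$l$ subsets $J\subseteq\{1,\dots,g\}$ recording where the $U^{e,u}$-part of the flag grows, of the sets of pairs consisting of a length-$l$ $\FF{p}$-flag in $U^{e,u}(\FF{p})\cong\FF{p}^g$ together with a complete $\FF{p}$-flag of its $(g-l)$-dimensional annihilator. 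The number of such pairs is $\#\Flag_{l,g}(\FF{p})\cdot\#\Flag_{g-l}(\FF{p})$, which equals $\#\Flag_g(\FF{p})$ independently of $l$; summing $\binom{g}{l}$ over $l$ gives $2^g\#\Flag_g(\FF{p})=ON_g$ points. This scheme is finite and reduced, hence discrete, and transporting along $\iota_A$ gives the claim for $\pi^{-1}(A)$; the closed form of $\#\Flag_g(\FF{p})$ is standard.

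For (2), with $1\le k\le g-1$, put $U=U^{e,u}\oplus U^{i,m}$, a symplectic subspace of dimension $2k$ orthogonal to the symplectic subspace $U^{i,u}$ of dimension $2(g-k)$. By the splitting, $\AFlag_w$ lies in the locus of flags of $\fk^{2g}$ splitting along $\fk^{2g}=U\oplus U^{i,u}$, identified by Proposition \ref{splitflag}(3) with $\coprod_{l}\coprod_{\binom{g}{l}}\Flag_{l,2k}(U)\times\Flag_{g-l,2(g-k)}(U^{i,u})$. Requiring $W_g$ Lagrangian forces $W_g\cap U$ and $W_g\cap U^{i,u}$ to be Lagrangian in $U$ and in $U^{i,u}$, so $l=k$; requiring $F_w,V_w$-stability amounts, by Lemma \ref{posprank}, to asking that the $U$-component define a point of $\AFlag_{\widehat w,2k}$ (with $\widehat w$ the ordinary final element of length $k$) and the $U^{i,u}$-component a point of $\AFlag_{\widetilde w,2(g-k)}$, with no further coupling once the shuffle $J$ is fixed, while conversely any such pair together with any of the $\binom{g}{k}$ shuffles produces a point of $\AFlag_w$. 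Hence $\AFlag_w\cong\coprod_{\binom{g}{k}}\AFlag_{\widehat w,2k}\times\AFlag_{\widetilde w,2(g-k)}$, and since part (1) in dimension $k$ gives $\AFlag_{\widehat w,2k}\cong\coprod_{ON_k}\Spec\fk$, we obtain $\AFlag_w\cong\coprod_{\binom{g}{k}ON_k}\AFlag_{\widetilde w,2(g-k)}$. Finally $EO_{\widetilde w}$ has $p$-rank $0$: its $p$-rank is $\widetilde w(1)-1$ by the corollary above, and $w$ of $p$-rank exactly $k$ forces $\psi_w(k+1)=k$, hence $\psi_{\widetilde w}(1)=0$ and $\widetilde w(1)=1$.

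I expect the main obstacle to be purely organizational, since \eqref{natsplit} and Proposition \ref{splitflag} do the heavy lifting: one must spot that (2) reduces to (1) through the orthogonal splitting $\fk^{2g}=U\oplus U^{i,u}$, and that (1) collapses because $\#\Flag_{l,g}(\FF{p})\cdot\#\Flag_{g-l}(\FF{p})=\#\Flag_g(\FF{p})$ for every $l$. The one genuinely delicate point is upgrading the bijection on $\fk$-points to an isomorphism of varieties, which is handled by the fact that each locus in play is produced as a reduced closed subvariety via Proposition \ref{splitflag}.
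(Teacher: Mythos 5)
Your proposal is correct and follows essentially the same route as the paper: the decomposition $\fk^{2g}=U^{e,u}\oplus U^{i,m}\oplus U^{i,u}$ from Lemma \ref{posprank}, containment of $\AFlag_w$ in the split locus via \eqref{natsplit} and Proposition \ref{splitflag}, identification of each piece (with the isotropy condition pinning the $U^{i,m}$-flag to the annihilator of the $U^{e,u}$-flag in part (1), and forcing $l=k$ in part (2)), and reduction of (2) to (1) through $\AFlag_{\widehat w,2k}$. You merely spell out the ``short calculation'' $\#\Flag_{l,g}(\FF{p})\cdot\#\Flag_{g-l}(\FF{p})=\#\Flag_g(\FF{p})$ that the paper leaves implicit.
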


\begin{proof}
	We use the notation of Lemma \ref{posprank}.
	\begin{enumerate}
		\item If $k=g$ we have $U^{i,u}=0,\ U^{e,u}=\oplus_{i=1}^g \fk\cdot e_i$ 
			and $U^{i,m}=\oplus_{i=g+1}^{2g} \fk\cdot e_i$. Let $w=w_A$. We use 
			the notation of Proposition \ref{splitflag} for $\calV=\fk^{2g},\ 
			\calV_1=U^{e,u}$ and $\calV_2=U^{i,m}$. By Equation \ref{natsplit} we see 
			that \begin{equation*}
				\AFlag_{w,2g}\subset Z_{2g,g,g}=\coprod_{l=0}^g
				\coprod_{\substack{J\subset I\\ \# J = l}} 
				\alpha_J\left(\Flag_{l,g}(\calV_1)\times 
				\Flag_{g-l,g}(\calV_2)\right).
			\end{equation*}
			Let $0\leq l\leq g$ and $J\subset I$ of cardinality $l$. 

			First note that $F_{|\calV_1}$ is equal to the componentwise application of 
			$\sigma$ (with respect to the basis $(e_1,\ldots,e_g)$) and that 
			$V_{|\calV_1}=0$. On the other hand $V_{|\calV_2}$ is equal to the componentwise application of 
			$\sigma^{-1}$ (with respect to the basis $(e_{g+1},\ldots,e_{2g})$) and 
			$F_{|\calV_2}=0$. From this it follows immediately that 
			$\Flag_w^{F,V}\cap\alpha_J\left(\Flag_{l,g}(\calV_1)\times 
			\Flag_{g-l,g}(\calV_2)\right)\simeq\Flag_{l,g}(\FF{p})\times 
			\Flag_{g-l,g}(\FF{p})$.
			
			Let $\left((\calF_j)_{j=0}^l,(\calG_j)_{j=0}^{g-l}\right)\in 
			\Flag_{l,g}(\calV_1)\times \Flag_{g-l,g}(\calV_2)$. Then the image 
			$\alpha_J\left((\calF_j)_{j=0}^l,(\calG_j)_{j=0}^{g-l}\right)$ is symplectic if and only 
			if $(\calG)_{j=0}^{g-l}$ is actually a flag in the $g-l$ dimensional 
			space $\calV_2\cap \calF_l^\perp$, where we consider $\calF_l$ as 
			a subspace of $\calV$.

			Combining these two statements it follows that 
			\begin{equation*}
				\AFlag_w\cap\alpha_J\left(\Flag_{l,g}(\calV_1)\times 
			\Flag_{g-l,g}(\calV_2)\right)\simeq\Flag_{l,g}(\FF{p})\times 
			\Flag_{g-l}(\FF{p}).
			\end{equation*}
			The claim now follows from a short calculation.
		\item We use the notation of Proposition \ref{splitflag} with 
			$\calV=\fk^{2g},\ \calV_1=U^{e,u}\oplus U^{i,m}$ and $\calV_2=U^{i,u}$. It 
			follows from $\calV_1=\calV_2^\perp$ and Equation \ref{natsplit} that 
			\begin{equation*}
			\AFlag_w\subset Z_{2g,g,2k,k}=\coprod_{\substack{J\subset I\\ \# J = 
			k}}
				\alpha_J\left(\Flag_{k,2k}(\calV_1)\times 
				\Flag_{g-k,2(g-k)}(\calV_2)\right).
			\end{equation*}	
			Let $J\subset I$ of cardinality $k$. Using the notation and the 
			isomorphisms $\widetilde{\beta}$ and $\widehat{\beta}$ of Lemma \ref{posprank} we see 
			that 
			\begin{equation*}
				\AFlag_{w}\cap\alpha_J
			\left(\Flag_{k,2k}(\calV_1)\times \Flag_{g-k,2(g-k)}(\calV_2)\right)\simeq 
			\AFlag_{\widehat{w},2k}\times \AFlag_{\widetilde{w},2(g-k)}.
			\end{equation*}
			By the first point $\AFlag_{\widehat{w},2k}$ is discrete of 
			cardinality $ON_{k}$ and the claim follows.
	\end{enumerate}
\end{proof}

\section{\texorpdfstring{The number of connected components of the fibers of 
$\pi$}{The number of connected components of the fibers of 
pi}}

\begin{prop}\label{ConnectedFib}
	Let $g\geq 1$. For all $A\in\calA_g^{(0)}(\fk)$, 
	the fiber $\pi^{-1}(A)$ is connected.
\end{prop}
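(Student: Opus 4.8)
The plan is to use the universal homeomorphism $\iota_A\colon\pi^{-1}(A)\xrightarrow{\sim}\AFlag_{w,2g}$, $w=w_A$, from Section~\ref{flagnot} and to prove connectedness of $\AFlag_{w,2g}$ by stripping the flag one entry at a time. Since $A$ has $p$-rank $0$, Lemma~\ref{posprank} in the case $k=0$ gives $U^{i,u}=\fk^{2g}$, so $F:=F_w$ and $V:=V_w$ are \emph{nilpotent} endomorphisms of $\fk^{2g}$; moreover $FV=VF=0$, this being the $\Psi_A$-pullback of $\im V=\ker F$, $\im F=\ker V$ from Proposition~\ref{FVandPair}(\ref{imker}). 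I also record that Corollary~\ref{FandV}(\ref{perp}) — together with its twin $F(W^\perp)=V^{-1}(W)^\perp$, proved in exactly the same way from Proposition~\ref{FVandPair}(\ref{pair}) — transports through $\Psi_A$ to statements about $F$, $V$ and $\paird_{\mathrm{def}}$ on $\fk^{2g}$.

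For $0\le j\le g$ let $\mathcal F_j\subset\Flag_{j,2g}$ be the closed subvariety whose $\fk$-points are the flags $(W_i)_{i=0}^{j}$ with $\dim W_i=i$, with $F(W_i),V(W_i)\subset W_i$ for all $i$, and with $W_j$ totally isotropic for $\paird_{\mathrm{def}}$. Then $\mathcal F_0=\Spec\fk$, and $\mathcal F_g$ is isomorphic to $\AFlag_{w,2g}$ by the remark in Section~\ref{flagnot} that a flag with totally isotropic top step extends uniquely to a symplectic flag. Deleting the last step gives morphisms $p_j\colon\mathcal F_j\to\mathcal F_{j-1}$ for $1\le j\le g$, and the heart of the argument is the claim that each $p_j$ is surjective with fibres isomorphic to projective spaces.

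To prove the claim, fix $(W_i)_{i=0}^{j-1}\in\mathcal F_{j-1}(\fk)$. Because $W_{j-1}$ is isotropic and $F,V$-stable, the two $\perp$-identities above give $V(W_{j-1}^\perp)=F^{-1}(W_{j-1})^\perp\subset W_{j-1}^\perp$ and, using the twin identity, $F(W_{j-1}^\perp)\subset W_{j-1}^\perp$, so $F$ and $V$ descend to nilpotent endomorphisms $\overline F,\overline V$ of $\overline M:=W_{j-1}^\perp/W_{j-1}$, still with $\overline F\,\overline V=\overline V\,\overline F=0$. A point of $\mathcal F_j$ lying over $(W_i)_{i=0}^{j-1}$ is precisely a line $\overline\ell\subset\overline M$ with $\overline F(\overline\ell)\subset\overline\ell$ and $\overline V(\overline\ell)\subset\overline\ell$ (isotropy of $W_j$ is automatic, $\overline\ell$ being a line in the symplectic space $\overline M$). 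As $\overline F$ is nilpotent it has no nonzero eigenvalue, so stability forces $\overline F(\overline\ell)=0$ and likewise $\overline V(\overline\ell)=0$; hence the fibre is $\mathbb P(\ker\overline F\cap\ker\overline V)$, a projective space. It is nonempty: $\ker\overline V\neq 0$ by nilpotency, $\overline F$ preserves $\ker\overline V$ since $\overline V\,\overline F=0$, and $\overline F|_{\ker\overline V}$ is nilpotent, hence has a nonzero kernel, which lies in $\ker\overline F\cap\ker\overline V$. Thus $p_j$ is surjective with irreducible fibres.

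To conclude I would invoke the elementary fact that a closed surjection $f\colon X\to Y$ with $Y$ connected and all fibres nonempty and connected has $X$ connected: a decomposition $X=U\amalg V$ into nonempty opens, since no connected fibre can meet both, would descend to $Y=f(U)\amalg f(V)$, and $f$ being closed and surjective makes both pieces open, contradicting connectedness of $Y$. Each $p_j$ is a morphism of projective varieties, hence closed, so applying this with the connected base $\mathcal F_0$ and inducting on $j$ shows that every $\mathcal F_j$ is connected; in particular $\pi^{-1}(A)\cong\AFlag_{w,2g}\cong\mathcal F_g$ is connected. The one step I expect to need genuine care — the rest being formal — is the passage to the subquotient $\overline M$: one must verify that $F$ and $V$ really do preserve $W_{j-1}^\perp$ so that $\overline F,\overline V$ are well defined, which is exactly what the $\perp$-identities supply. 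Note also that the $p_j$ are \emph{not} $\mathbb P^1$-bundles (the fibre dimension jumps), so $\AFlag_{w,2g}$ cannot be presented as an iterated projective bundle; the soft "connected base, connected fibres" principle is what does the work.
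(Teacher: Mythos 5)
Your proposal is correct and follows essentially the same route as the paper's own proof (which itself rephrases Yu's argument): the same tower of partial-flag varieties, the same identification of each forgetful map's fibre with $\mathbb{P}(\ker\overline F\cap\ker\overline V)$ in $W_{j-1}^\perp/W_{j-1}$, the same nilpotency-based nonemptiness check, and the same ``closed surjection with connected fibres over a connected base'' conclusion. Your care in verifying that $F$ and $V$ preserve $W_{j-1}^\perp$ via the $\perp$-identities is a point the paper leaves implicit, but nothing of substance differs.
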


\begin{remark}
	In \cite[Proposition 5.2]{Yu2} Yu proves a more general statement. We rephrase his proof in 
	our language.
\end{remark}

\begin{proof}[Proof of Proposition \ref{ConnectedFib}]
	Let $w=w_A$. We have to show that $\AFlag_w$ is connected. Let 
	$0\leq i\leq g,\ I=\{0,\ldots,i\}$ and denote by $\AFlag_{w,i}$ the variety whose $\fk$-valued 
	points are given by
	\begin{equation*}
		\AFlag_{w,i}(\fk)=\left\{(W_j)_{j=0}^i\in\Flag_{i,2g}(\fk)\pipe
		\begin{matrix}
			\forall j\in I\ V_w(W_j),F_w(W_j)\subset W_j\ \\
			\text{ and } W_i\text{ is isotropic}
		\end{matrix}
		\right\}.
	\end{equation*}
	Then $\AFlag_{w,g}=\AFlag_w$ and we will show by induction on $i$ that 
	$\AFlag_{w,i}$ is connected for all $0\leq i\leq g$. For each $1\leq i\leq 
	g$ consider the morphism $\zeta_i:\AFlag_{w,i}\to \AFlag_{w,i-1}$ given by 
	$\zeta_i\left( (W_j)_{j=0}^i \right)=(W_j)_{j=0}^{i-1}$ for 
	$(W_j)_{j=0}^i\in \AFlag_{w,i}(\fk)$. This is, in particular, a closed map of 
	topological spaces and it will be sufficient to show that it is surjective 
	with connected fibers. Fix a point $(W_j)_{j=0}^{i-1}\in 
	\AFlag_{w,i-1}(\fk)$ and write $\calW=W^\perp_{i-1}/W_{i-1}$ with canonical 
	projection $pr:W^\perp_{i-1}\to \calW$. $F_w$ and $V_w$ induce 
	endomorphisms $\overline{F}$ and $\overline{V}$ of $\calW$. Our assumption 
	on the $p$-rank of $A$ implies that $\overline{F}$ and $\overline{V}$ are 
	nilpotent.
	This means that a 1-dimensional subspace of $\calW$ is stable under 
	$\overline{F}$ or $\overline{V}$ if and only if it is contained in $\ker 
	\overline{F}$ or $\ker \overline{V}$, respectively. Therefore consider the 
	subspace $\calU=\ker \overline{F}\cap \ker\overline{V}$ and denote by 
	$\mathbb{P}(\calU)$ the (classical) projective space over $\calU$. Consider 
	the map $\mathbb{P}(\calU)\to \Flag_{i,2g}(\fk)$, sending a subspace 
	$U\subset \calU$ to the flag
	\begin{equation*}
		W_0\subset W_1\subset\cdots\subset W_{i-1}\subset pr^{-1}(U).
	\end{equation*}
	With the conciderations above this map is easily seen to induce an 
	isomorphism of (classical) varieties $\mathbb{P}(\calU)\to 
	\zeta_i^{-1}\left( (W_j)_{j=0}^{i-1} \right)(\fk)$.  Hence the fibers of 
	$\zeta_i$ are connected. To see that they are nonempty we have to check that 
	$\dim\calU\geq 1$. This is automatic if $\overline{F}$ and $\overline{V}$ 
	are the zero morphism. By Proposition \ref{FVandPair}(\ref{imker}) we know that 
	$\im \overline{V}\subset \ker \overline{F}$ and $\im \overline{F}\subset\ker 
	\overline{V}$. Now the nilpotency of $\overline{F}$ implies that $\im 
	\overline{F}\cap \ker \overline{F}\neq 0$ if $\overline{F}\neq 0$ and
	the nilpotency of $\overline{V}$ implies that $\im \overline{V}\cap \ker 
	\overline{V}\neq 0$ if $\overline{V}\neq 0$, whence the claim.
\end{proof}

\begin{prop}
	Let $g\geq 1$ and $k\geq 0$. If $A\in\calA^{(k)}_g(\fk)$,
	the fiber $\pi^{-1}(A)$ consists of $\begin{pmatrix}g\\k\end{pmatrix}ON_{k}$ 
	connected components. In particular it is connected if and only if $k=0$.
\end{prop}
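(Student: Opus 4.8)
The plan is to combine the two halves of Proposition \ref{posto0red} with the connectedness result of Proposition \ref{ConnectedFib}. Recall that by the Definition of $\iota_A$ the fiber $\pi^{-1}(A)$ is universally homeomorphic to $\AFlag_{w,2g}$, where $w=w_A$; in particular $\pi^{-1}(A)$ and $\AFlag_{w,2g}$ have the same number of connected components. So it suffices to count the connected components of $\AFlag_{w,2g}$.

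First I would dispose of the extreme cases. If $k=g$, then by Proposition \ref{posto0red}(1) the space $\AFlag_{w,2g}$ is discrete of cardinality $ON_g$, hence it has $ON_g=\binom{g}{g}ON_g$ connected components, as claimed. If $k=0$, Proposition \ref{ConnectedFib} says $\AFlag_{w,2g}$ is connected, i.e.\ it has $1=\binom{g}{0}ON_0$ connected components (note $ON_0=2^0\cdot 1=1$).

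Now assume $1\leq k\leq g-1$. By Proposition \ref{posto0red}(2), $\AFlag_{w,2g}$ is isomorphic to $\binom{g}{k}ON_k$ disjoint copies of $\AFlag_{\widetilde w,2(g-k)}$, where $\widetilde w\in W_{\mathrm{final},g-k}$ has $p$-rank $0$. Since the number of connected components of a disjoint union is the sum of the numbers of connected components of the pieces, it suffices to show that $\AFlag_{\widetilde w,2(g-k)}$ is connected. But $\AFlag_{\widetilde w,2(g-k)}$ is (universally homeomorphic to) the fiber $\pi^{-1}(\widetilde A)$ for any $\widetilde A\in\calA_{g-k}(\fk)$ lying in $EO_{\widetilde w}$, and such an $\widetilde A$ has $p$-rank $0$; hence $\AFlag_{\widetilde w,2(g-k)}$ is connected by Proposition \ref{ConnectedFib} applied with $g$ replaced by $g-k$. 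Therefore $\AFlag_{w,2g}$ has exactly $\binom{g}{k}ON_k$ connected components, and the same is true of $\pi^{-1}(A)$.

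Finally, for the ``in particular'' statement: the fiber is connected precisely when $\binom{g}{k}ON_k=1$. Since $\binom{g}{k}\geq 1$ and $ON_k=2^k\prod_{l=1}^k(p^l-1)/(p-1)^k\geq 2^k\geq 1$ with equality in the latter only when $k=0$, we get $\binom{g}{k}ON_k=1$ if and only if $k=0$. The only step requiring any real input is the reduction in the middle, and that is already packaged for us in Propositions \ref{posto0red} and \ref{ConnectedFib}; the rest is bookkeeping about connected components of disjoint unions and the arithmetic of $ON_k$. I do not expect any genuine obstacle here, since the two cited propositions do all the geometric work.
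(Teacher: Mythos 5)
Your proposal is correct and is exactly the paper's argument: the paper's proof is the single line ``Combine Proposition \ref{posto0red} and Proposition \ref{ConnectedFib}'', and you have simply spelled out the bookkeeping (the three cases $k=0$, $1\leq k\leq g-1$, $k=g$, plus the arithmetic of $ON_k$) that this combination entails.
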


\begin{proof}
	Combine Proposition \ref{posto0red} and Proposition \ref{ConnectedFib}.
\end{proof}

\section{\texorpdfstring{Dimension of the fibers of $\pi$}{Dimension of the fibers of pi}}

Let $g=2$ or $g=3$ and let $A\in\calA_g(\fk)$. Depending on $w_A\in\Wfin$ we list the dimension of 
$\pi^{-1}(A)\subset\calA_I$. It can be read off the calculations in Sections 
\ref{Fib2} and \ref{Fib3} and the results of Section \ref{secposprank}.

\begin{center}
	\begin{tabular}{|c|c|}
		\hline
		\multicolumn{2}{|c|}{$g=2$}\\
		\hline
		\hline
		$\Wfin$&$\dim$\\
		\hline
		$\id$&1\\
		\hline
		$s_2$&1\\
		\hline
		$s_{12}$&0\\
		\hline
		$s_{212}$&0\\
		\hline
	\end{tabular}
	\begin{tabular}{|c|c||c|c|}
		\hline
		\multicolumn{4}{|c|}{$g=3$}\\
		\hline
		\hline
		$\Wfin$&$\dim$&$\Wfin$&$\dim$\\
		\hline
		$\id$&3&$s_{123}$&1\\
		\hline
		$s_3$&2&$s_{3123}$&1\\
		\hline
		$s_{23}$&2&$s_{23123}$&0\\
		\hline
		$s_{323}$&1&$s_{323123}$&0\\
		\hline
	\end{tabular}
\end{center}

\section{The KR stratification}

This section contains the results about the KR stratification on $\calA_I$ that 
we are going to use. We will use an ad hoc definition on $\fk$-valued points and 
we refer to \cite[Section 2.4]{KRDL} for a more comprehensive treatment of the 
subject.

\subsection{Relative positions}

\begin{prop}{\cite[Section 3]{SSLIW}}
	Let $g\geq 1$, $w\in \Wfin$ and $(W_i)_{i=0}^{2g}\in\AFlag_{w,2g}(\fk)$. 
	There is a unique element $x=t^\lambda\omega\in W_a\tau$ ($\omega\in W,\ 
	\lambda\in X_*(T)$) such that there is a basis $(\varepsilon_i)_{i=0}^{2g}$ 
	of $\fk^{2g}$ with the following properties:
	\begin{enumerate}
		\item $\lambda(i)\in\{0,1\}$ for all $i$.
		\item For every $i$, $W_i$ is spanned by 
			$\varepsilon_1,\ldots,\varepsilon_i$.
		\item If $V_w(W_{i-1})\subsetneqq V_w(W_{i})$ for any $i\geq 1$, we have 
			$V_w(W_{i})=V_w(W_{i-1})\oplus \fk\cdot \varepsilon_{\omega(i)}$.
		\item \begin{equation*}
				\im 
				V_w=\bigoplus_{\substack{i=1,\ldots,2g\\\lambda(i)=0}}\fk\cdot 
				\varepsilon_i.
			\end{equation*}
	\end{enumerate}
\end{prop}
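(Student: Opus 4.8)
The plan is to build the datum $(\lambda,\omega,(\varepsilon_i))$ directly out of the relative position of the flag $(W_i)$ with respect to the two Lagrangian subspaces $\im V_w$ and $\ker V_w=\im F_w$ of $\fk^{2g}$, and only afterwards to verify that $x=t^\lambda\omega$ lands in $W_a\tau$ and that each piece of the datum was forced. Write $M=\im V_w$ throughout; transporting Corollary~\ref{FandV}(\ref{imVsymp}) through $\Psi_A$ shows $M=\ker F_w$ is totally isotropic of dimension $g$ for $\paird_{\mathrm{def}}$, and similarly $\ker V_w=\im F_w$ is Lagrangian.

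\emph{The cocharacter $\lambda$.} For $0\le j\le 2g$ set $d(j)=\dim(W_j\cap M)$, a step function increasing by $0$ or $1$, and define $\lambda(i)=1-(d(i)-d(i-1))$, so (1) holds. Since $W_{2g-j}=W_j^\perp$ and $M$ is Lagrangian we have $W_j^\perp\cap M=(W_j+M)^\perp$, hence $d(2g-j)=d(j)+g-j$; feeding this into the definition gives $\lambda(i)+\lambda(2g+1-i)=1$, so $\lambda\in X_*(T)$, and $\sum_i\lambda(i)=2g-d(2g)=g$. The tuples with $\lambda(i)\in\{0,1\}$ and $\lambda(i)+\lambda(2g+1-i)=1$ form a single $W$-orbit (of size $2^g$, with $\mathrm{Stab}_W(\mu)\cong S_g$), so $\lambda=v\mu$ for some $v\in W$.

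\emph{The basis and the permutation $\omega$.} Put $S=\{i:V_w(W_i)\supsetneq V_w(W_{i-1})\}$; since $\ker V_w=\im F_w$ is Lagrangian the same symmetry argument gives $|S|=g$ and $S^c=\{\,2g+1-i:i\in S\,\}$. On the $g$-dimensional space $M$ we carry two complete flags: $A_\bullet=(M\cap W_i)_i$, whose jumps sit exactly at the indices $j_1<\dots<j_g$ with $\lambda(j_k)=0$, and $B_\bullet=(V_w(W_i))_i$ (here $V_w$ induces an isomorphism $W_i/(W_i\cap\ker V_w)\xrightarrow{\ \sim\ }V_w(W_i)$), whose jumps sit exactly at the elements $i_1<\dots<i_g$ of $S$. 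By the Bruhat decomposition pick a basis $f_1,\dots,f_g$ of $M$ with $\langle f_1,\dots,f_k\rangle=A_{(k)}$ and $\langle f_{\sigma(1)},\dots,f_{\sigma(k)}\rangle=B_{(k)}$ for a suitable $\sigma\in S_g$. One checks $f_k\in W_{j_k}\setminus W_{j_k-1}$, so set $\varepsilon_{j_k}=f_k$, and for the indices $i$ with $\lambda(i)=1$ choose, inductively in $i$, any $\varepsilon_i\in W_i\setminus W_{i-1}$. Then $\langle\varepsilon_1,\dots,\varepsilon_i\rangle=W_i$ for all $i$ (giving (2)) and $\langle\varepsilon_j:\lambda(j)=0\rangle=A_{(g)}=M$ (giving (4)). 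For $i=i_k\in S$ one has $V_w(W_{i-1})=B_{(k-1)}$ and $V_w(W_i)=B_{(k)}=B_{(k-1)}\oplus\fk f_{\sigma(k)}$, so putting $\omega(i_k)=j_{\sigma(k)}$ establishes (3); this $\omega$ is a bijection $S\to\{j:\lambda(j)=0\}$, and because $S^c=2g+1-S$ the rule $\omega(2g+1-i)=2g+1-\omega(i)$ extends it to a well-defined $\omega\in W$. Finally $\omega\in W\subseteq W_a$ and $\lambda-\mu=v\mu-\mu$ lies in the coroot lattice of $\mathrm{Sp}_{2g}$, which is the translation part of $W_a$; hence in $\widetilde W/W_a$ the class of $x=t^\lambda\omega$ equals that of $t^\mu$, i.e.\ that of $\tau$, so $x\in W_a\tau$.

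\emph{Uniqueness, and the hard point.} For any basis satisfying (1)--(4), conditions (2) and (4) force $M\cap W_i=\langle\varepsilon_j:j\le i,\ \lambda(j)=0\rangle$, so $\lambda(i)=0$ exactly when $\dim(M\cap W_i)>\dim(M\cap W_{i-1})$, which depends only on $(W_i)$ and $w$; thus $\lambda$ is determined. Condition (3) together with $V_w(W_0)=0$ then shows inductively that each $V_w(W_i)$ is spanned by a subfamily of the $\varepsilon_j$, and that for $i\in S$ the index $\omega(i)$ is the unique index by which the jump set of $m\mapsto\dim(V_w(W_i)\cap W_m)$ exceeds that of $m\mapsto\dim(V_w(W_{i-1})\cap W_m)$; hence $\omega|_S$, and with it $\omega\in W$ and $x=t^\lambda\omega$, is determined. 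The only non-formal ingredient is the simultaneous choice of a basis of $M$ adapted to both $A_\bullet$ and $B_\bullet$ and its completion compatibly with the filtration $(W_i)$; the genuine point to watch is that the resulting $\omega$ really is the restriction of an element of $W$ and that $t^\lambda\omega$ really lies in the coset of $\tau$ — both of which come down to the symmetry $S^c=2g+1-S$ (i.e.\ $\ker V_w$ Lagrangian) and to $\lambda$ being a $W$-conjugate of $\mu$. Everything else reduces to bookkeeping with the explicit shape of $F_w$ and $V_w$ from Section~\ref{StaBas}.
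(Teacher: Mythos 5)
The paper does not actually prove this proposition: it is imported verbatim from \cite[Section 3]{SSLIW}, where it is an instance of the general theory of relative position (the Iwahori--Bruhat decomposition for the symplectic group, applied to the pair consisting of the flag $(W_i)$ and the ``$V$-image'' flag). So there is no in-paper argument to compare against; what you have written is a self-contained linear-algebra derivation, and it is correct. Your construction isolates exactly the right ingredients: that $\im V_w=\ker F_w$ and $\ker V_w=\im F_w$ are Lagrangian (Corollary \ref{FandV} transported through $\Psi_A$), which gives both $\lambda(i)+\lambda(2g+1-i)=1$ and the symmetry $S^c=2g+1-S$ needed to make $\omega$ land in $W$; the common adapted basis for the two complete flags $(M\cap W_i)_i$ and $(V_w(W_i))_i$ on $M$, which produces the $\varepsilon_j$ with $\lambda(j)=0$ and the values $\omega(i)$ for $i\in S$; and the standard facts $v\mu-\mu\in Q^\vee$, $t^{Q^\vee}\subset W_a$, $W\subset W_a$ to place $x$ in $W_a\tau$. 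The uniqueness argument is also sound: conditions (2) and (4) recover $\lambda$ from the jump pattern of $\dim(W_i\cap\im V_w)$, and condition (3) recovers $\omega|_S$ from the jump sets of $m\mapsto\dim(V_w(W_i)\cap W_m)$, after which $\omega\in W$ is forced by $\omega(2g+1-i)=2g+1-\omega(i)$. The only step you leave implicit that deserves a word is that the complement $\{2g+1-j:\lambda(j)=0\}$ equals $\{j:\lambda(j)=1\}$ (immediate from $\lambda(j)+\lambda(2g+1-j)=1$), which is what makes the extension of $\omega$ from $S$ to all of $\{1,\dots,2g\}$ a genuine permutation. Compared with citing the abstract relative-position formalism, your route is more elementary and has the advantage of making the roles of $\lambda$ (intersection pattern with $\im V_w$) and $\omega$ (relative position of the $V_w$-image flag) completely explicit; the cost is only the bookkeeping you have already carried out.
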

We call any such basis a \emph{KR basis} for $(W_i)_{i=0}^{2g}$ and $x$ is 
called the \emph{KR type} of $(W_i)_{i=0}^{2g}$.

The set of possible KR types (as $w$ runs through $\Wfin$) is denoted by 
$\Adm$. It is a subset of $W_a\tau$. Given $w\in \Wfin$ and $x\in \Adm$ we denote by 
$\calL(x,w)$ the set of flags in $\AFlag_{w}(\fk)$ with KR type equal to $x$. 

\subsection{The KR stratification}

On $\calA_I$ we have the Kottwitz-Rapoport stratification (a stratification in the 
sense of Section \ref{prank})
\begin{equation*}
	\calA_I=\coprod_{x\in \Adm} \KR{x},
\end{equation*}
given by $(A_i)_i\in \KR{x}(\fk)$ if and only if $\iota_{A_0}((A_i)_i)\in 
\calL(x,w_{A_0})$.

The following Proposition lists some properties of the KR stratification.

\begin{prop}{\cite[Section 2.5]{SSLIW}}\label{KRprop}
	Let $x,y\in \Adm$ and $\omega\in W,\ \lambda\in 
	X_*(T)$ such that $x=t^\lambda \omega$. 
	\begin{enumerate}
		\item $\KR{x}$ is equi-dimensional of dimension $\ell(x)$.
		\item\label{KRp-rank} The $p$-rank is constant on $\KR{x}$ with value 
			$\#\mathrm{Fix}(\omega)/2$ (where 
			$\mathrm{Fix}(\omega)=\{i\in\{1,\ldots,2g\}\pipe \omega(i)=i\}$).
		\item We have $\KR{x}\subset\overline{\KR{y}}$ if and only if $x\leq y$.
		\item\label{krirred} If $\KR{x}$ is not contained in the supersingular locus $\calS_I$, 
			then $\KR{x}$ is irreducible.
	\end{enumerate}
\end{prop}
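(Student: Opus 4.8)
The plan is to reduce everything to the theory of local models, as in \cite{KRDL} and \cite[Section 2.5]{SSLIW}. Recall that $\calA_I$ fits into a local model diagram $\calA_I\xleftarrow{\varphi}\widetilde{\calA}_I\xrightarrow{\psi}\overline{M}$, where $\overline{M}$ is the special fiber of the local model for $\mathrm{GSp}_{2g}$ with Iwahori level structure, $\varphi$ is a torsor under a smooth affine group scheme, and $\psi$ is smooth of the same relative dimension; hence $\calA_I$ and $\overline{M}$ are isomorphic \'etale-locally, and by construction the KR stratification is the stratification pulled back along this diagram from the stratification of $\overline{M}$ into locally closed subschemes $M_x$, $x\in\Adm$, cut out by the relative-position condition that the ad hoc definition via KR bases makes explicit. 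Since dimension, equidimensionality, smoothness and closure relations are unchanged under smooth morphisms and under passing to a smooth torsor, I would first prove the analogues of (1), (3) and the smoothness needed for (4) on the side of $\overline{M}$ and then transport them.

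For (1) and (3), embed $\overline{M}$ into the affine flag variety of $\mathrm{GSp}_{2g}$ over $\fk$ by forgetting everything about a point of the local model except the underlying lattice chain. Each $M_x$ then becomes the Iwahori orbit of $x\in W_a\tau\subset\widetilde{W}$, i.e.\ an affine Schubert cell; by the flatness theorem of \cite{KRDL}, $\overline{M}$ is reduced and equals $\bigcup_{x\in\Adm}\overline{\mathcal{I}x}$, so this orbit really lies in $\overline{M}$. The Iwahori--Bruhat decomposition gives $M_x\cong\mathbb{A}^{\ell(x)}_{\fk}$, in particular irreducible of pure dimension $\ell(x)$, and $\overline{\mathcal{I}x}=\coprod_{y\le x}\mathcal{I}y$; intersecting with $\overline{M}$ and using that $\Adm$ is downward closed for the Bruhat order yields $\overline{M_x}\cap\overline{M}=\coprod_{y\in\Adm,\ y\le x}M_y$. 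Transporting along $\varphi$ and $\psi$ gives (1) and (3). Statement (2) is of a different nature, and I would prove it on Dieudonn\'e modules: for $(A_i)_i\in\KR{x}(\fk)$ with $x=t^\lambda\omega$ and $w=w_{A_0}$, one checks, combining property (3) of a KR basis — which says $\omega$ records exactly the new directions acquired by $V_w$ along the flag — with the explicit shape of $F_w$ and $V_w$ from Section \ref{StaBas}, that the multiplicative-plus-\'etale part of $\Dieu(A_0[p])$ has dimension $\#\mathrm{Fix}(\omega)$; being self-dual under the symplectic form, this forces the $p$-rank to equal $\#\mathrm{Fix}(\omega)/2$. (Alternatively, $\pi(\KR{x})$ is a union of EO strata, necessarily all of the same $p$-rank, which can then be read off from any one of them.)

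The hard part will be (4), because irreducibility is global and the \'etale-local comparison with $\overline{M}$ tells us only that $\KR{x}$ is smooth of pure dimension $\ell(x)$, hence a finite disjoint union of irreducible components, not that there is just one. To bound the number of components I would use $\pi|_{\KR{x}}\colon\KR{x}\to\calA_g$, whose image is a union of EO strata; choose the stratum $EO_\psi$ that is open and dense in $\pi(\KR{x})$. If $\KR{x}\not\subset\calS_I$, then $EO_\psi\not\subset\calS_g$, so $EO_\psi$ is irreducible by Proposition \ref{EOProp}(2), and the open dense piece $\KR{x}\cap\pi^{-1}(EO_\psi)$ of $\KR{x}$ is fibered over this irreducible base. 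It then remains to show that this fibration does not disconnect it: over the $p$-rank $0$ locus the fibers of $\pi$ are connected by Proposition \ref{ConnectedFib}, while for positive $p$-rank $k$ Proposition \ref{posto0red} shows the fiber breaks into $\binom{g}{k}ON_k$ pieces indexed by discrete combinatorial data, and one must verify that this data can be tracked continuously over all of $EO_\psi$ — equivalently, that the finite \'etale cover of the relevant open subset of $EO_\psi$ that $\KR{x}$ cuts out is connected. Pinning down this last connectedness statement is exactly where the non-supersingularity hypothesis enters; it is the step carried out in \cite[Section 2.5]{SSLIW}, whose argument I would follow.
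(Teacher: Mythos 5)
First, note that the paper does not prove this proposition itself: it is quoted from \cite[Section 2.5]{SSLIW} (G\"ortz--Yu), so there is no internal argument to compare against, only the reference you also end up leaning on. Your reconstruction of (1)--(3) does follow the standard route taken there: the local model diagram identifies the KR strata \'etale-locally with Iwahori orbits in the affine flag variety, giving pure dimension $\ell(x)$ and the closure relations, and the $p$-rank is read off from a KR basis via the \'etale and multiplicative parts of the Dieudonn\'e module. One point you pass over is the non-emptiness of all strata $\KR{y}$, $y\in\Adm$, which is needed both for (1) to have content and for the ``only if'' direction of (3) (an empty $\KR{x}$ lies in every closure); this requires the surjectivity of $\widetilde{\calA}_I\to\overline{M}$ and is not automatic from the diagram.

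The genuine gap is in (4), and it is more than the deferral you acknowledge at the end. Your plan fibres $\KR{x}$ over the open dense EO stratum $EO_\psi$ of $\pi(\KR{x})$ and invokes connectedness of the fibres of $\pi$. But the relevant fibres are $\pi^{-1}(A)\cap\KR{x}$, not $\pi^{-1}(A)$: Proposition \ref{ConnectedFib} says nothing about the former, and the paper's own tables (Sections \ref{DimII}--\ref{DimIV}) show these intersections are positive-dimensional locally closed pieces of $\AFlag_{w}$, not finite \'etale over the base, with no a priori reason to be connected or irreducible. It is also not justified that $\KR{x}\cap\pi^{-1}(EO_\psi)$ is dense in $\KR{x}$, i.e.\ that every irreducible component of $\KR{x}$ dominates the top EO stratum in its image. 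The actual argument in \cite{SSLIW} is of a different nature: the strata are smooth (so irreducible components coincide with connected components), they are stable under the prime-to-$p$ Hecke correspondences, and the irreducibility and $\ell$-adic monodromy results of \cite{Yu2} --- where non-supersingularity is precisely the hypothesis guaranteeing maximal monodromy --- force the Hecke action to be transitive on the set of components, hence there is only one. Without that input, or a substitute for it, (4) is not proved.
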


In view of property (\ref{KRp-rank}) we denote by $\Adm^{(i)}$ the set of 
admissible elements of $p$-rank $i$, $0\leq i\leq g$.

\begin{lem}{\cite[Lemma 8.1]{SSLIW}}\label{whatisAdmn}
	The projection $\widetilde{W}\to W$ induces a bijection $\xi:\Admn\to 
	\{\omega\in 
	W\pipe \mathrm{Fix}(\omega)=\emptyset\}$. Its inverse is given by $\omega\mapsto 
	t^{\lambda(\omega)}\omega$ with 
	\begin{equation*}
		\lambda(\omega)(i) = \left\{ 
		\begin{array}{ll}
			0, & \omega^{-1}(i) > i \\
			1, & \omega^{-1}(i) < i
		\end{array}
		\right., \quad i=1, \dots, 2g.
	\end{equation*}
\end{lem}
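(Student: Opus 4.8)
The plan is to identify $\Adm$ with the set of $\mu$-permissible elements for the minuscule coweight $\mu=(1,\dots,1,0,\dots,0)$ attached to $\tau$, and then to recover the translation part of an admissible element from its permutation part by testing the permissibility inequalities at the vertices of the base alcove.

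First I would dispose of the formal part. By Proposition~\ref{KRprop}(\ref{KRp-rank}) the $p$-rank of $\KR{x}$, for $x=t^\lambda\omega\in\Adm$ with $\omega\in W$ and $\lambda\in X_*(T)$, is $\#\mathrm{Fix}(\omega)/2$; hence $x\in\Admn$ exactly when $\mathrm{Fix}(\omega)=\emptyset$, and the projection $\widetilde W\to W$, $t^\lambda\omega\mapsto\omega$, does restrict to a map $\xi\colon\Admn\to\{\omega\in W\pipe\mathrm{Fix}(\omega)=\emptyset\}$. It then remains to show that $\xi$ is bijective with the asserted inverse, which I would split into: (i) for every fixed-point-free $\omega\in W$ the element $t^{\lambda(\omega)}\omega$, with $\lambda(\omega)$ as in the statement, lies in $\Adm$ (giving surjectivity and the formula); and (ii) if $t^\lambda\omega\in\Adm$ with $\mathrm{Fix}(\omega)=\emptyset$ then $\lambda=\lambda(\omega)$ (giving injectivity). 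A preliminary check: since $\omega\in W$ satisfies $\omega^{-1}(i)+\omega^{-1}(2g+1-i)=2g+1$, the two alternatives $\omega^{-1}(i)<i$ and $\omega^{-1}(i)>i$ are interchanged by $i\leftrightarrow 2g+1-i$, so $\lambda(\omega)(i)+\lambda(\omega)(2g+1-i)=1$; thus $\lambda(\omega)$ really defines an element of $X_*(T)$, and in fact lies in the orbit $W\mu$.

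For the core I would invoke the theorem of Kottwitz--Rapoport that for minuscule $\mu$ the admissible set $\Adm=\Adm(\mu)$ coincides with the set of $\mu$-permissible elements (cf.\ \cite{KRDL}): $x=t^\lambda\omega\in\Adm$ iff $\lambda\in W\mu$ and $x(\mathbf a)-\mathbf a\in\mathrm{Conv}(W\mu)$ for every vertex $\mathbf a$ of the base alcove of $X_*(T)\otimes\mathbb{R}$ (equivalently, for the $g+1$ vertices $\mathbf a_0,\dots,\mathbf a_g$ attached to the standard self-dual lattice chain). Since $\mu$ is a $0/1$-coweight of $\mathrm{GSp}_{2g}$, $\mathrm{Conv}(W\mu)$ is cut out inside $X_*(T)\otimes\mathbb{R}$ by the bounds $0\le a_i\le 1$, so for each vertex $\mathbf a$ the condition $x(\mathbf a)-\mathbf a\in\mathrm{Conv}(W\mu)$ becomes an explicit finite system of linear inequalities in the $0/1$-vector $\lambda$. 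Working these out entry by entry and letting the vertex vary, they forbid $\lambda(i)$ from sitting on the ``wrong side'' of each inversion of $\omega$ at $i$; using $\mathrm{Fix}(\omega)=\emptyset$ the whole system reduces to the single prescription $\lambda(i)=1\iff\omega^{-1}(i)<i$, that is $\lambda=\lambda(\omega)$. This yields uniqueness (ii); and since $\lambda(\omega)$ does satisfy every one of the inequalities, it also yields existence (i). Hence $\xi$ is a bijection with $\xi^{-1}(\omega)=t^{\lambda(\omega)}\omega$.

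I expect the real work to sit in that last computation, and specifically in fixing the conventions so that the answer comes out in terms of $\omega^{-1}(i)$ versus $i$, rather than with $\omega(i)$ or with the complementary vector $\mu-\lambda(\omega)$: one must commit to the semidirect-product convention $\widetilde W=W\ltimes X_*(T)$, to the base alcove and the labelling of its vertices, to the rôle of the similitude coordinate for $\mathrm{GSp}_{2g}$ (which both enforces $\lambda(i)+\lambda(2g+1-i)=1$ and lets one restrict the inequalities to $0\le i\le g$), and to the fact that the paper works with \emph{contravariant} Dieudonné theory, since it is the latter that pins down the direction of the inversion condition. As cross-checks I would test the final formula against $\tau$ itself, whose permutation part is the fixed-point-free involution $i\mapsto g+i$ on $\{1,\dots,g\}$ and for which the formula gives $\lambda=(0,\dots,0,1,\dots,1)$ --- matching $\tau$ rewritten in the form $t^\lambda\omega$ --- and against the $\mu$-ordinary classes $t^{v(\mu)}$, $v\in W$, which project to $\mathrm{id}$ and so, consistently, do not lie in the image of $\xi$. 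Finally, (i) alone also admits a purely geometric proof via the recipe for KR types recalled above --- one exhibits a flag in a suitable $\AFlag_w(\fk)$ whose KR type is $t^{\lambda(\omega)}\omega$ --- but the permissibility route delivers (i) and (ii) together and uniformly in $g$.
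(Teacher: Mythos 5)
The paper itself gives no proof of this lemma; it is quoted verbatim from \cite[Lemma 8.1]{SSLIW}, and your argument is essentially the proof given there: reduce to the combinatorial statement via Proposition \ref{KRprop}(\ref{KRp-rank}), invoke the Kottwitz--Rapoport theorem that admissibility equals permissibility for the minuscule coweight $\mu$, and read off $\lambda$ from the vertexwise inequalities. The plan is sound, and the one point you flag as delicate --- the conventions --- is indeed where the only real care is needed: with $\widetilde W$ acting by $t^\lambda\omega\colon v\mapsto \lambda+\omega(v)$, $(\omega(v))_i=v_{\omega^{-1}(i)}$, and the vertices of the base alcove taken as $\mathbf a_j=-(e_1+\cdots+e_j)$, $0\le j\le g$, the $i$-th coordinate of $x(\mathbf a_j)-\mathbf a_j$ is $\lambda(i)+[i\le j]-[\omega^{-1}(i)\le j]$, and requiring this to lie in $[0,1]$ for all $j$ forces $\lambda(i)=1$ exactly when $\omega^{-1}(i)<i$ and $\lambda(i)=0$ exactly when $\omega^{-1}(i)>i$ (with $\lambda(i)$ unconstrained at a fixed point, which is why fixed-point-freeness is needed for uniqueness); conversely $\lambda(\omega)$ satisfies all these inequalities together with the sum and self-duality constraints. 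Your remark that contravariant Dieudonn\'e theory pins down the direction of the inequality is a red herring --- the lemma is purely combinatorial and the direction is fixed by the alcove and semidirect-product conventions --- but this does not affect the argument.
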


\subsection{\texorpdfstring{The set $\Admn$}{The set of admissible elements}}\label{Admn}

In \cite{Yu} Yu gives a list of all the 29 elements of $\Admn$ for $g=3$.
We copy this list as we will use it extensively.
\begin{center}
	\begin{tabular}{|c|l||c|l|}
		\hline
		KR&$(\lambda,w)\in X_*(T)\rtimes W$&KR&$(\lambda,w)\in X_*(T)\rtimes 
		W$\\
		\hline
		$\tau$&$(0,0,0,1,1,1),\ (14)(25)(36)$&$s_{310}\tau$&$(0,0,1,0,1,1),\ 
		(132645)$\\
		\hline
		$s_0\tau$&$(0,0,0,1,1,1),\ (1463)(25)$&$s_{120}\tau$&$(0,0,0,1,1,1),\ 
		(16)(2453)$\\
		\hline
		$s_1\tau$&$(0,0,0,1,1,1),\ (142635)$&$s_{320}\tau$&$(0,0,1,0,1,1),\ 
		(154623)$\\
		\hline
		$s_2\tau$&$(0,0,0,1,1,1),\ (153624)$&$s_{230}\tau$&$(0,1,0,1,0,1),\ 
		(124653)$\\
		\hline
		$s_3\tau$&$(0,0,1,0,1,1),\ (1364)(25)$&$s_{201}\tau$&$(0,0,0,1,1,1),\ 
		(1562)(34)$\\
		\hline
		$s_{10}\tau$&$(0,0,0,1,1,1),\ (145)(263)$&$s_{301}\tau$&$(0,0,1,0,1,1),\ 
		(135642)$\\
		\hline
		$s_{20}\tau$&$(0,0,0,1,1,1),\ (153)(246)$&$s_{121}\tau$&$(0,0,0,1,1,1),\ 
		(16)(25)(34)$\\
		\hline
		$s_{30}\tau$&$(0,0,1,0,1,1),\ 
		(13)(25)(46)$&$s_{231}\tau$&$(0,1,0,1,0,1),\ (1265)(34)$\\
		\hline
		$s_{01}\tau$&$(0,0,0,1,1,1),\ (142)(356)$&$s_{312}\tau$&$(0,0,1,0,1,1),\ 
		(16)(2354)$\\
		\hline
		$s_{21}\tau$&$(0,0,0,1,1,1),\ 
		(15)(26)(34)$&$s_{323}\tau$&$(0,1,1,0,0,1),\ (123654)$\\
		\hline
		$s_{31}\tau$&$(0,0,1,0,1,1),\ 
		(135)(264)$&$s_{3010}\tau$&$(0,0,1,0,1,1),\ (132)(456)$\\
		\hline
		$s_{12}\tau$&$(0,0,0,1,1,1),\ 
		(16)(24)(35)$&$s_{3120}\tau$&$(0,0,1,0,1,1),\ (16)(23)(45)$\\
		\hline
		$s_{32}\tau$&$(0,0,1,0,1,1),\ 
		(154)(236)$&$s_{3230}\tau$&$(0,1,1,0,0,1),\ (123)(465)$\\
		\hline
		$s_{23}\tau$&$(0,1,0,1,0,1),\ 
		(124)(365)$&$s_{2301}\tau$&$(0,1,0,1,0,1),\ (12)(34)(56)$\\
		\hline
		$s_{010}\tau$&$(0,0,0,1,1,1),\ (145632)$&&\\
		\hline
	\end{tabular}
\end{center}
\section{\texorpdfstring{KR strata and the fibers of $\pi$}{KR strata and the fibers of pi}}\label{KRandFib}

Let $g\geq 1$ and $x\in\Adm$. We write
\begin{equation*}
	\ES(x)=\{w\in \Wfin\pipe \pi^{-1}(EO_w)\cap A_{I,x}\neq \emptyset\}.
\end{equation*}
Then \cite[Corollary 3.3]{SSLIW} states that
\begin{equation}\label{eovskrbasis}
		\pi(\calA_{I,x})=\coprod_{w\in\ES(x)}EO_w.
\end{equation}
Hence in order to understand the relationship 
between the EO and the KR stratification we need to understand the sets 
$\ES(x)$.

Now for all $w\in\Wfin$ we have $w\in\ES(x)
\Leftrightarrow \calL(x,w)\neq\emptyset $ and it is therefore sufficient 
to study the sets $\calL(x,w)$. We will do this for $g=3$, using our 
calculations of the sets
$\AFlag_{w,6}$.  The sets $\calL(x,\id)$ are rather complicated and we content 
ourselves with determining whether they are nonempty. For the other final 
elements $w$ of $p$-rank 0 we are able to determine the sets $\calL(x,w)$ 
completely.

First we have the following general result:

\begin{lem}\label{KRgeneral}
	Let $g\geq 1$. 
	\begin{enumerate}
		\item\label{KRgeneralfirst} For $\omega\in 
			S_g=\left<s_1,\ldots,s_{g-1}\right>\subset W$ we have 
			$\omega\tau\in\Admn$ and $\ES(\omega\tau)=\id$.
		\item\label{KRgeneralsecond}For $x=t^\lambda \omega\in \Admn$ 
			($\lambda\in X_*(T),\ \omega\in W$) we write 
			$N_x=\{i\in\{1,\ldots,2g\}\pipe \omega^2(i)<\omega(i)<i\}$. Then for 
			$(A_i)_i\in\KR{x}(\fk)$ we have $g-a(A_0)\geq \# N_x$.
	\end{enumerate}
\end{lem}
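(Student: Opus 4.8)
The plan is to prove both parts by unwinding the definitions of the KR type and the KR basis from the previous subsection, exploiting the explicit form of $\tau$ and of the maps $F_w,V_w$ recorded in Section \ref{StaBas}. For part (\ref{KRgeneralfirst}), I would first observe that for $\omega\in S_g=\left<s_1,\ldots,s_{g-1}\right>$ the element $\omega$ fixes no $i$ (since already $\tau$, whose $W$-component is $(14)(25)(36)$ in the $g=3$ notation, is fixed-point-free, and composing with a permutation of $\{1,\ldots,g\}$ that is matched by the mirror permutation of $\{g+1,\ldots,2g\}$ cannot create a fixed point on either half), so $\omega\tau\in\Admn$ by Lemma \ref{whatisAdmn}. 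To identify $\ES(\omega\tau)$ I would use the reformulation $w\in\ES(x)\Leftrightarrow\calL(x,w)\neq\emptyset$: one checks directly that the superspecial flag, i.e.\ the image of $\AFlag_{\id}$ under $\iota_A$ for $A$ superspecial, always admits a KR basis realizing KR type $\omega\tau$ for each such $\omega$, by choosing the basis $(\varepsilon_i)$ adapted to the chain $\im V_{\id}=\left<e_4,e_5,e_6\right>$ (in the $g=3$ model) and permuting the "$F$-part" basis vectors according to $\omega$. Conversely, by Equation \ref{eovskrbasis} and Proposition \ref{KRprop}(1) an admissible element of length $0$ can only have $\KR{x}$ mapping into the $0$-dimensional stratum, so $\ES(\omega\tau)$ can contain no final element other than $\id$; hence $\ES(\omega\tau)=\{\id\}$.

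For part (\ref{KRgeneralsecond}) the strategy is to relate the combinatorial set $N_x$ to the rank of the matrix $\overline T$ appearing in Section \ref{SSDieMod}, via $g-a(A_0)=\rk(\overline T)$ from that section and $a$-number constancy on KR strata is \emph{not} available, so instead I would argue directly on the Dieudonné module of $A_0$. Given $(A_i)_i\in\KR{x}(\fk)$ with $w=w_{A_0}$, fix a KR basis $(\varepsilon_i)_{i=0}^{2g}$ for the associated flag $(W_i)=\iota_{A_0}((A_i)_i)$, with $x=t^\lambda\omega$. Property (3) of the KR basis says that whenever $V_w$ strictly increases on the $i$-th step, the new direction is $\fk\cdot\varepsilon_{\omega(i)}$, and property (4) pins down $\im V_w$ as the span of those $\varepsilon_i$ with $\lambda(i)=0$. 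I would use these to show that the condition $\omega^2(i)<\omega(i)<i$ forces a certain two-step "descent" in the flag that is only compatible with $V_w^2$ (equivalently $F_w^2$, by Lemma \ref{DimV2} and Corollary \ref{FandV}) having a larger image than it would in the superspecial case — and $\dim\im V_w^2=\psi(g)=g-a(A_0)$ by Lemma \ref{DimV2} together with Proposition \ref{EOProp}(5). Concretely, each $i\in N_x$ should contribute an independent basis vector to $\im V_w^2$, giving $\dim\im V_w^2\geq\#N_x$, i.e.\ $g-a(A_0)\geq\#N_x$.

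The main obstacle I anticipate is the second part: making the passage from the purely group-theoretic inequality $\omega^2(i)<\omega(i)<i$ to a genuine lower bound on $\dim\im V_w^2$ precise, since the KR basis interacts with $V_w$ only through the "jump" directions described in property (3), and one must carefully track how a double descent in $\omega$ produces a pair of flag steps on which $V_w$ is injective enough to land in $\im V_w^2$ independently. I would handle this by reindexing: the indices $i$ with a strict jump of $V_w$ at step $i$ are exactly those in the "$n$-set" of Section \ref{StaBas}, and $\omega$ restricted to them is a permutation whose cycle structure controls the powers of $V_w$; a $2$-step descent $\omega^2(i)<\omega(i)<i$ then lies inside a cycle of length $\geq 3$, and summing one such contribution per element of $N_x$ — after checking the contributions are linearly independent in $\im V_w^2$ — yields the bound. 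For $g\leq 3$ one may, if a clean general argument proves elusive, fall back on the finite list of $29$ elements in Section \ref{Admn} and verify the inequality case by case, which suffices for all applications in this paper.
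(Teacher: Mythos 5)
Your treatment of part (2) is essentially the paper's: reduce via Lemma \ref{DimV2} and Proposition \ref{EOProp}(5) to the statement $\dim\im V_w^2\geq \#N_x$ for a flag of KR type $x$ in $\AFlag_{w,2g}$, and then observe that each $i\in N_x$ contributes the vector $V_w(\varepsilon_{\omega(i)})$ to $\im V_w^2$: the double descent $\omega^2(i)<\omega(i)<i$ guarantees both that $\varepsilon_{\omega(i)}$ is a jump direction of $V_w$ (hence lies in $\im V_w$) and that $V_w(W_{\omega(i)})\supsetneq V_w(W_{\omega(i)-1})$, so that $V_w(\varepsilon_{\omega(i)})$ has a nonzero component along the jump direction $\varepsilon_{\omega^2(i)}$; linear independence then follows from a standard nested-filtration argument since the indices $\omega^2(i)$ are distinct. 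The paper asserts exactly this in one line, so your anticipated "main obstacle" is the intended argument and your fallback to a case check for $g\leq 3$ is unnecessary.

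Part (1), however, has a genuine gap in the inclusion $\ES(\omega\tau)\subseteq\{\id\}$. You argue that an admissible element of length $0$ can only map into the $0$-dimensional EO stratum, but the elements $\omega\tau$ with $\omega\in S_g$ are not of length $0$ in general: since $\tau$ has length $0$ and normalizes the simple reflections, $\ell(\omega\tau)=\ell(\omega)$, so for $g=3$ the element $s_{121}\tau$ has length $3$ and the bound $\dim\pi(\KR{s_{121}\tau})\leq 3$ excludes none of the four $p$-rank $0$ EO strata. Your dimension argument settles only the case $\omega=\id$. The paper's argument is instead pointwise and structural: for any $(A_i)_i\in\KR{\omega\tau}(\fk)$, the fact that $\xi(\omega\tau)$ maps $\{g+1,\ldots,2g\}$ onto $\{1,\ldots,g\}$ forces, via the defining properties of a KR basis, $\im V_{w_{A_0}}=\ker V_{w_{A_0}}=W_g$; by Proposition \ref{FVandPair} this gives $\im F=\ker F$, so the canonical filtration of $A_0$ is $0\subset F(\Dieu)\subset\Dieu$ and $w_{A_0}=\id$. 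This single computation yields both inclusions at once (KR strata being nonempty), so the explicit construction of KR bases on the superspecial module that you propose for the other direction, while harmless, is also not needed.
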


\begin{proof}
	\begin{enumerate}
		\item Let $\omega\in S_g$, then $\omega\tau$ is admissible by Lemma 
			\ref{whatisAdmn} above. Consider $(A_i)_i\in\KR{\omega\tau}(\fk)$ 
			with image $(W_i)_i$ 
			under $\iota_{A_0}$. $\omega\tau$ satisfies
			$\xi(\omega\tau)(\{g+1,\ldots,2g\})=\{1,\ldots,g\}$, which means that $\im 
			V_{w_{A_0}}=\ker V_{w_{A_0}}=W_g$. By Proposition \ref{FVandPair}\ref{imker} this 
			implies that $\im F_{w_{A_0}}=\ker F_{w_{A_0}}$, hence the canonical filtration on 
			$\Dieu(A_0)$ is given by $0\subset F(\Dieu)\subset \Dieu$ which has 
			associated final element $\id$.
		\item By Lemma \ref{DimV2} and Proposition \ref{EOProp} our claim is 
			equivalent to the following statement: Let $w\in\Wfin^{(0)}$ and
			$(W_i)_{i=0}^{2g}\in\AFlag_{w,2g}(\fk)$ of KR type $x$. Then $\dim\im 
			V_w^2\geq \# N_x$. 
			
			But if $(\varepsilon)_{i=0}^{2g}$ is a KR basis for 
			$(W_i)_i$, the set $\{V(\varepsilon_{\omega(i)})\pipe i\in N_x\}$ is 
			a linearly independent subset of $\im V_w^2$ of 
			cardinality $\# N_x$.
	\end{enumerate}
\end{proof}

For the rest of this Section $g$ is equal to $3$.

\subsection{\texorpdfstring{$w=\id$}{w=id}}\label{DimI}

Let $x\in\Admn$. By Lemma \ref{KRgeneral}(\ref{KRgeneralsecond}) we know that $\calL(x,\id)\neq\emptyset$
implies that $N_x=\emptyset$. Inspecting the table in Section \ref{Admn} we see 
that this condition is only satisfied for $x\in \{\tau, s_1\tau, s_2\tau, 
s_{21}\tau, s_{12}\tau, s_{121}\tau, s_{30}\tau, s_{310}\tau, s_{320}\tau, 
s_{3120}\tau, s_{2301}\tau\}$. We claim that $\id\in\ES(x)$ for all $x$ in this set.
For $x\in S_3\tau=\{\tau, s_1\tau, s_2\tau, s_{21}\tau, s_{12}\tau, \allowbreak
s_{121}\tau\}$ this is true by Lemma \ref{KRgeneral}(\ref{KRgeneralfirst}). For the remaining 
elements we write down an explicit nonempty subset $\calK(x)\subset 
\calL(x,\id)$. 

\begin{center}
	\begin{longtable}{|c|c|c|}
		\hline
		$\calK(s_{30}\tau)$&$\calK(s_{3120}\tau)$&$\calK(s_{2301}\tau)$\\
		\hline
		$\begin{pmatrix}
			0&0&1\\
			0&0&0\\
			0&0&0\\
			0&0&0\\
			0&1&0\\
			1&0&0
		\end{pmatrix}$&
		$\begin{pmatrix}
			0&0&1\\
			0&0&0\\
			0&0&0\\
			0&0&0\\
			1&0&0\\
			0&1&0
		\end{pmatrix}$&
		$\begin{pmatrix}
			0&1&0\\
			0&0&0\\
			0&0&0\\
			0&0&0\\
			0&0&1\\
			1&0&0
		\end{pmatrix}$\\
		\hline
		\multicolumn{3}{c}{}\\
		\hline
		$\calK(s_{310}\tau)$&$\calK(s_{320}\tau)$&\\
		\hline
		$\begin{pmatrix}
			0&0&-1\\
			0&0&b_2\\
			0&0&b_2^{p+1}+b_1\\
			b_1&b_2&\alpha\\
			-b_2^p&1&0\\
			1&0&0
		\end{pmatrix}$&
		$\begin{pmatrix}
			0&0&-1\\
			0&0&b_2\\
			0&0&b_2^{p^{-1}}b_2+b_1\\
			b_1&b_2&\alpha\\
			-b_2^{p^{-1}}&1&0\\
			1&0&0
		\end{pmatrix}$&\\
		\hline
		$b_2\in\fk-\FF{p^2},\ 
		\alpha\in\fk$&$b_2\in\fk-\FF{p^2},\ \alpha\in\fk$&\\
		$b_1\in\fk$ a root of &$b_1\in\fk$ a root of&\\
		$T^p+T+b_2^{p(p+1)}\in\fk[T]$&$T^{p^2}+T^p+b_2^{p+1}\in\fk[T]$&\\
		\hline
		\hline
		\multicolumn{2}{|c|}{2}&\\
		\hline
	\end{longtable}
\end{center}

\subsection{\texorpdfstring{$w=s_3$}{w=s\_3}}\label{DimII}

We list those $\calL(x,s_3)$ which are nonempty.
\begin{center}
	\begin{tabular}{|c|c|c|}
		\hline
		$\calL(s_{120}\tau,s_3)$&$\calL(s_{3120}\tau,s_3)$&$\calL(s_{312}\tau,s_3)$\\
		\hline
		$\begin{pmatrix}
			0&0&0\\
			0&0&0\\
			0&0&0\\
			0&0&1\\
			1&0&0\\
			c&1&0
		\end{pmatrix}$ &
		$\begin{pmatrix}
			0&0&\alpha\\
			0&0&0\\
			0&0&0\\
			0&0&1\\
			1&0&0\\
			c&1&0
		\end{pmatrix}$ &
		$\begin{pmatrix}
			0&0&1\\
			0&0&0\\
			0&0&0\\
			0&0&0\\
			1&0&0\\
			c&1&0
		\end{pmatrix}$\\
		\hline
		$c\in\fk$&$c\in\fk,\ \alpha\in \fk^\times$&$c\in\fk$\\
		\hline
		\hline
		1&2&1\\
		\hline
	\end{tabular}
	\begin{longtable}{|c|c|c|}
		\hline
		$\calL(s_{201}\tau,s_3)$&$\calL(s_{2301}\tau,s_3)$&$\calL(s_{231}\tau,s_3)$\\
		\hline
		$\begin{pmatrix}
			0&0&0\\
			0&0&0\\
			0&0&0\\
			0&1&0\\
			0&c&1\\
			1&0&0
		\end{pmatrix}$ &
		$\begin{pmatrix}
			0&a&0\\
			0&0&0\\
			0&0&0\\
			0&1&0\\
			0&c&1\\
			1&0&0
		\end{pmatrix}$ &
		$\begin{pmatrix}
			0&1&0\\
			0&0&0\\
			0&0&0\\
			0&0&0\\
			0&c&1\\
			1&0&0
		\end{pmatrix}$\\
		\hline
		$c\in\fk$&$c\in\fk,\ a\in\fk^\times$&$c\in\fk$\\
		\hline
		\hline
		1&2&1\\
		\hline
		\multicolumn{3}{c}{}\\
		\hline
		$\calL(s_{30}\tau,s_3)$&$\calL(s_0\tau,s_3)$&$\calL(s_3\tau,s_3)$\\
		\hline
		$\begin{pmatrix}
			0&0&\alpha\\
			0&0&0\\
			0&0&0\\
			0&0&1\\
			0&1&0\\
			1&0&0
		\end{pmatrix}$ &
		$\begin{pmatrix}
			0&0&0\\
			0&0&0\\
			0&0&0\\
			0&0&1\\
			0&1&0\\
			1&0&0
		\end{pmatrix}$ &
		$\begin{pmatrix}
			0&0&1\\
			0&0&0\\
			0&0&0\\
			0&0&0\\
			0&1&0\\
			1&0&0
		\end{pmatrix}$\\
		\hline
		$\alpha\in\fk^\times$&&\\
		\hline
		\hline
		1&0&0\\
		\hline
	\end{longtable}
\end{center}

\subsection{\texorpdfstring{$w=s_{23}$}{w=s\_23}}\label{DimIII}

We list those $\calL(x,s_{23})$ which are nonempty.

\begin{center}
	\begin{longtable}{|c|c|c|c|c|}
		\hline
		$\calL(s_{20}\tau,s_{23})$&$\calL(s_{320}\tau,s_{23})$&$\calL(s_{120}\tau,s_{23})$&$\calL(s_{3120}\tau,s_{23})$&$\calL(s_{312}\tau,s_{23})$\\
		\hline
		$\begin{pmatrix}
			0&0&0\\
			0&0&0\\
			0&0&0\\
			0&0&1\\
			1&0&0\\
			0&1&0
		\end{pmatrix}$&
		$\begin{pmatrix}
			0&0&\alpha\\
			0&0&0\\
			0&0&0\\
			0&0&1\\
			1&0&0\\
			0&1&0
		\end{pmatrix}$&
		$\begin{pmatrix}
			0&0&0\\
			0&0&0\\
			0&0&0\\
			0&0&1\\
			1&0&0\\
			c&1&0
		\end{pmatrix}$&
		$\begin{pmatrix}
			0&0&\alpha\\
			0&0&0\\
			0&0&0\\
			0&0&1\\
			1&0&0\\
			c&1&0
		\end{pmatrix}$&
		$\begin{pmatrix}
			0&0&1\\
			0&0&0\\
			0&0&0\\
			0&0&0\\
			1&0&0\\
			c&1&0
		\end{pmatrix}$\\
		\hline
		&$\alpha\in\fk^\times$&$c\in\fk^\times$&$c,\alpha\in\fk^\times$&$c\in\fk^\times$\\
		\hline
		\hline
		0&1&1&2&1\\
		\hline
		\multicolumn{5}{c}{}\\
		\hline
		$\calL(s_{32}\tau,s_{23})$&$\calL(s_{310}\tau,s_{23})$&$\calL(s_{10}\tau,s_{23})$&$\calL(s_{31}\tau,s_{23})$&$\calL(s_{01}\tau,s_{23})$\\
		\hline
		$\begin{pmatrix}
			0&0&1\\
			0&0&0\\
			0&0&0\\
			0&0&0\\
			1&0&0\\
			0&1&0
		\end{pmatrix}$&
		$\begin{pmatrix}
			0&0&\alpha\\
			0&0&0\\
			0&0&0\\
			0&0&1\\
			0&1&0\\
			1&0&0
		\end{pmatrix}$&
		$\begin{pmatrix}
			0&0&0\\
			0&0&0\\
			0&0&0\\
			0&0&1\\
			0&1&0\\
			1&0&0
		\end{pmatrix}$&
		$\begin{pmatrix}
			0&0&1\\
			0&0&0\\
			0&0&0\\
			0&0&0\\
			0&1&0\\
			1&0&0
		\end{pmatrix}$&
		$\begin{pmatrix}
			0&0&0\\
			0&0&0\\
			0&0&0\\
			0&1&0\\
			1&0&0\\
			0&0&1
		\end{pmatrix}$\\
		\hline
		&$\alpha\in\fk^\times$&&\\
		\hline
		\hline
		0&1&0&0&0\\
		\hline
		\newpage
		\hline
		$\calL(s_{201}\tau,s_{23})$&$\calL(s_{23}\tau,s_{23})$&$\calL(s_{231}\tau,s_{23})$&$\calL(s_{3230}\tau,s_{23})$&$\calL(s_{3010}\tau,s_{23})$\\
		\hline
		$\begin{pmatrix}
			0&0&0\\
			0&0&0\\
			0&0&0\\
			0&1&0\\
			1&0&0\\
			0&c&1
		\end{pmatrix}$ &
		$\begin{pmatrix}
			0&1&0\\
			0&0&0\\
			0&0&0\\
			0&0&0\\
			0&0&1\\
			1&0&0
		\end{pmatrix}$ &
		$\begin{pmatrix}
			0&1&0\\
			0&0&0\\
			0&0&0\\
			0&0&0\\
			0&c&1\\
			1&0&0
		\end{pmatrix}$&
		$\begin{pmatrix}
			0&1&0\\
			0&0&1\\
			0&0&0\\
			0&0&0\\
			0&0&\beta\\
			1&0&0
		\end{pmatrix}$&
		$\begin{pmatrix}
			0&0&0\\
			0&0&0\\
			0&0&1\\
			0&1&0\\
			1&0&0\\
			0&0&\beta
		\end{pmatrix}$\\
		\hline
		$c\in\fk^\times$&&$c\in\fk^\times$&$\beta\in\fk$&$\beta\in\fk$\\
		\hline
		\hline
		1&0&1&1&1\\
		\hline
	\end{longtable}
\end{center}

\subsection{\texorpdfstring{$w=s_{323}$}{w=s\_323}}\label{DimIV}

We list those $\calL(x,s_{323})$ which are nonempty.

\begin{center}
	\begin{longtable}{|c|c|c|c|}
		\hline
		$\calL(s_{3230}\tau,s_{323})$&$\calL(s_{323}\tau,s_{323})$&$\calL(s_{230}\tau,s_{323})$&$\calL(s_{3010}\tau,s_{323})$\\
		\hline
		$\begin{pmatrix}
			0&1&0\\
			0&0&1\\
			0&0&0\\
			0&0&0\\
			0&0&\beta\\
			1&0&0
		\end{pmatrix}$&
		$\begin{pmatrix}
			0&1&0\\
			0&0&1\\
			0&0&0\\
			0&0&0\\
			0&0&0\\
			1&0&0
		\end{pmatrix}$&
		$\begin{pmatrix}
			0&1&0\\
			0&0&0\\
			0&0&0\\
			0&0&0\\
			0&0&1\\
			1&0&0
		\end{pmatrix}$&
		$\begin{pmatrix}
			0&0&\alpha\\
			0&0&0\\
			0&0&0\\
			0&0&1\\
			0&1&0\\
			1&0&0
		\end{pmatrix}$\\
		\hline
		$\beta\in\fk^\times$&&&$\alpha\in\fk^\times$\\
		\hline
		\hline
		1&0&0&1\\
		\hline
		\multicolumn{4}{c}{}\\
		\hline
		$\calL(s_{010}\tau,s_{323})$&$\calL(s_{301}\tau,s_{323})$&$\calL(s_{2301}\tau,s_{323})$&\\
		\hline
		$\begin{pmatrix}
			0&0&0\\
			0&0&0\\
			0&0&0\\
			0&0&1\\
			0&1&0\\
			1&0&0
		\end{pmatrix}$&
		$\begin{pmatrix}
			0&0&1\\
			0&0&0\\
			0&0&0\\
			0&0&0\\
			0&1&0\\
			1&0&0
		\end{pmatrix}$&
		$\begin{pmatrix}
			0&a&1\\
			0&0&0\\
			0&0&0\\
			0&0&0\\
			0&1&0\\
			1&0&0
		\end{pmatrix}$&\\
		\hline
		&&$a\in\fk^\times$&\\
		\hline
		\hline
		0&0&1&\\
		\hline
	\end{longtable}
\end{center}

\section{Proof of the results of Section \ref{KRandFib}}\label{krproof}

In order to illustrate the method we show that $\calK(s_{310}\tau)\subset 
\calL(s_{310}\tau,\id)$. 
Let $V=V_{\id}$. For an element $(W_i)_{i=0}^6$ of $\calK(s_{310}\tau)(\fk)$ choose elements
$b_2\in\fk-\FF{p^2}$, $\alpha\in\fk$ and $b_1\in\fk$ with $b_1^p+b_1+b_2^{p(p+1)}=0$ such 
that 
\begin{equation}
	(W_i)_{i=0}^6=\Phi
	\begin{pmatrix}
		0&0&-1\\
		0&0&b_2\\
		0&0&b_2^{p+1}+b_1\\
		b_1&b_2&\alpha\\
		-b_2^p&1&0\\
		1&0&0
	\end{pmatrix}\tag{$*$}.
\end{equation}
First we write down a matrix $C=(c_1c_2c_3c_4c_5c_6)\in\GL{6}(\fk)$ such that $W_i=\oplus_{j=1}^i 
\fk\cdot c_j$ for all $0\leq i\leq 6$. For the first three columns of $C$ we can 
use the columns of the matrix of Equation $(*)$ above. We find the other columns using the 
condition that $(W_i)_{i=0}^6$ is a symplectic flag, meaning that $c_4\perp c_1,c_2$ and 
$c_5\perp c_1$. Hence
\begin{equation*}
	C=\begin{pmatrix}
		0&0&-1&0&0&0\\
		0&0&b_2&0&1&0\\
		0&0&b_2^{p+1}+b_1&0&b_2^p&1\\
		b_1&b_2&\alpha&1&0&0\\
		-b_2^p&1&0&0&0&0\\
		1&0&0&0&0&0
	\end{pmatrix}
\end{equation*}
satisfies our requirements.

From this matrix we can read off the images $(V(W_i))_{i=0}^6$ using the explicit 
description of Section \ref{StaBas} and we need to find a basis 
$(\varepsilon_i)_{i=1}^6$ of $\fk^6$ such that $W_i=\oplus_{j=1}^i\fk\cdot 
\varepsilon_j$ and such that $V(W_i)$ is spanned by a subset of 
$\{\varepsilon_1,\ldots,\varepsilon_i\}$ for each $0\leq i\leq 6$. First we have 
$V(W_2)=0$. Now $b_2\notin \FF{p^2}$ implies that $V(W_3)\nsubseteq W_1$ and 
hence we can take $\varepsilon_2=V(c_3)$. The equation for $b_1$ implies that $V(W_5)\subset W_2$ 
which means that we can use $\varepsilon_1=c_1$ and $\varepsilon_4=c_4$.

This means that a KR basis $(\varepsilon_i)_{i=1}^6$ of $(W_i)$ is given by the columns of the following 
matrix 
\begin{equation*}
	\varepsilon=
	\begin{pmatrix}
		0&0&-1&0&0&0\\
		0&0&b_2&0&1&0\\
		0&0&b_2^{p+1}+b_1&0&b_2^p&1\\
		b_1&b_1^{p^{-2}}&\alpha&1&0&0\\
		-b_2^p&-b_2^{p^{-1}}&0&0&0&0\\
		1&1&0&0&0&0
	\end{pmatrix}.
\end{equation*}
Here the equation for $b_1$ is needed to see that 
$V(\varepsilon_3)=\varepsilon_2$. We have observed that
$V(W_1)=V(W_2)=0$, $V(W_3)=V(W_4)=\left<\varepsilon_2\right>$, 
$V(W_5)=\left<\varepsilon_1,\varepsilon_2\right>$ and 
$V(W_6)=\left<\varepsilon_1,\varepsilon_2,\varepsilon_4\right>$. Hence if 
$\lambda\in X_*(T)$ and $\omega\in W$ are such that
$(W_i)_i\in\calL(t^\lambda \omega,\id)$, we see that $\lambda=(0,0,1,0,1,1)$ and 
that $\omega(3)=2,\ \omega(5)=1$ and $\omega(6)=4$. The $\omega\in W$ satisfying 
these conditions is given by 
\begin{equation*}
	\omega=
	\begin{pmatrix}
		1&2&3&4&5&6\\
		3&6&2&5&1&4
	\end{pmatrix}
\end{equation*} and from the table in Section \ref{Admn} we see that $t^\lambda 
\omega=s_{310}\tau$.

The proof of $\calK(s_{320}\tau)\subset\calL(s_{320}\tau,\id)$ is similar and in 
all the other cases it is very easy to write down a suitable KR basis.
\section{\texorpdfstring{The sets $\ES(x)$ for $x\in\Admn$ in dimensions 2 and 
3}{The sets ES(x) in dimensions 2 and 3}}\label{ESset}

Let $g=2$. In this case the sets $\ES(x)$ for $x\in\Admn$ have already been 
known, see for instance \cite[Example 3.4]{SSLIW}. They are given by

\begin{center}
	\begin{tabular}{|l|cc|}
		\hline
		$x$&\multicolumn{2}{c|}{$\ES(x)$}\\
		\hline
		\hline
		$\tau,s_1\tau$&$\id$&\\
		\hline
		$s_2\tau$, $s_0\tau$&&$s_2$\\
		\hline
		$s_{20}\tau$&$\id$&$s_2$\\
		\hline
	\end{tabular}
\end{center}

Let $g=3$. The following table contains the sets $\ES(x)$ for $x\in\Admn$. They 
can be read off the calculations in Section \ref{KRandFib}. The upper block 
contains the supersingular elements.
\begin{center}
	\begin{tabular}{|l|cccc|}
		\hline
		$x$&\multicolumn{4}{c|}{$\ES(x)$}\\
		\hline
		\hline
		$\tau, s_1\tau, s_2\tau, s_{21}\tau, s_{12}\tau, s_{121}\tau$& 
		$\id$&&&\\
		\hline
		$s_3\tau, s_0\tau$& &$s_3$&&\\
		\hline
		$s_{30}\tau$&$\id$&$s_3$&&\\
		\hline
		\hline
		$s_{10}\tau, s_{23}\tau, s_{20}\tau, s_{31}\tau,s_{01}\tau, 
		s_{32}\tau$&&&$s_{23}$&\\
		\hline
		$s_{310}\tau, s_{320}\tau$&$\id$&&$s_{23}$&\\
		\hline
		$s_{3120}\tau$&$\id$&$s_3$&$s_{23}$&\\
		\hline
		$s_{120}\tau, s_{312}\tau,s_{201}\tau, s_{231}\tau$&&$s_3$&$s_{23}$&\\
		\hline
		$s_{010}\tau, s_{323}\tau, s_{301}\tau, s_{230}\tau$&&&&$s_{323}$\\
		\hline
		$s_{2301}\tau$&$\id$&$s_3$&&$s_{323}$\\
		\hline
		$s_{3010}\tau, s_{3230}\tau$&&&$s_{23}$&$s_{323}$\\
		\hline
	\end{tabular}
\end{center}

\begin{remark}
	We can use this table to answer a question posed in a preliminary version of 
	\cite{KRDL}: For every $g\geq 1$ one has the following inclusion:
	\begin{equation}\label{contained}
		\coprod_{\substack{x\in\Admn\\\KR{x}\subset\calS_I}}\KR{x}\subseteq\pi^{-1}\left(\coprod_{\substack{w\in\Wfin\\EO_w\subset 
		\calS_g}}EO_w\right).
	\end{equation}
	In loc.cit. it was asked whether this inclusion is an equality. The answer 
	is negative in the case $g=3$: Let $A\in EO_{\id}(\fk)$. By the table above 
	there is preimage $(A_i)_i\in(\pi^{-1}(A)\cap \KR{s_{310}\tau})(\fk)$ of 
	$A$, so that $(A_i)_i$ is contained in the right hand side of the above 
	inclusion, but as $\KR{s_{310}\tau}\nsubseteq \calS_I$ it is not contained 
	in the left hand side.
\end{remark}

\subsection{Some informal observations}

Let $1\leq g\leq 3$. It is interesting to note that $\ES\left(\xi^{-1}( \xi (x)^{-1})\right)=\ES(x)$ for every 
$x\in\Admn$. We don't know if this is true for arbitrary $g$.

Compare the line
\begin{center}
	\begin{tabular}{|l|c|}
		\hline
		$x$&$\ES(x)$\\
		\hline
		\hline
		$s_2\tau$, $s_0\tau$&$s_2$\\
		\hline
	\end{tabular}
\end{center}
of the table for $g=2$ with the lines
\begin{center}
	\begin{tabular}{|l|cc|}
		\hline
		$x$&\multicolumn{2}{c|}{$\ES(x)$}\\
		\hline
		\hline
		$s_3\tau, s_0\tau$&$s_3$&\\
		\hline
		$s_{10}\tau, s_{23}\tau, s_{20}\tau, s_{31}\tau,s_{01}\tau, 
		s_{32}\tau$&&$s_{23}$\\
		\hline
	\end{tabular}
\end{center}
of the table for $g=3$.

We can prove the following result, generalizing these lines:
Let $2\leq g$, $i\in\{0,1\}$ and consider the sets 
$S_j=\{s_j,s_{g-j}\}\subset W$ for $0\leq j\leq i$. 
Then for every element $x\in\Admn$ of the form 
$\left(t_{\upsilon(0)}\cdot t_{\upsilon(1)}\cdots t_{\upsilon(i)}\right)\tau$,
where $\upsilon\in S(\{0,\ldots,i\})$ and $t_j\in S_j$ for all $0\leq j\leq i$, we have 
$\ES(x)=\{s_{g-i}\cdot s_{g-i+1}\cdots s_{g}\}$. 

\section{The KR stratification and the supersingular locus}\label{krandss}

Let $g=3$. For $x\in\Admn$ we want to understand the intersection $\KR{x}\cap 
S_{I}$.

\begin{prop}
	Let $g=3$ and $x\in\Admn$. Then $\KR{x}\cap \calS_{I}=\emptyset\Leftrightarrow 
	\ES(x)=\{s_{23}\}$.
\end{prop}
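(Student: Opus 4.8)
The plan is to reduce the assertion to a fibrewise comparison: I translate the condition on $\KR{x}\cap\calS_I$ into a condition on which EO strata of $p$-rank $0$ occur in $\ES(x)$, and then read off the answer from Theorem~\ref{EOandSSthm} together with the table of Section~\ref{ESset}.

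First I record the relevant translation. Fix $A\in\calA_3(\fk)$ and put $w=w_A$. The map $\iota_A$ is a bijection of $\pi^{-1}(A)(\fk)$ onto $\AFlag_{w,6}(\fk)$, and by the definition of the KR stratification it carries $\pi^{-1}(A)(\fk)\cap\KR{x}(\fk)$ bijectively onto $\calL(x,w)$; hence $\pi^{-1}(A)\cap\KR{x}\neq\emptyset$ if and only if $\calL(x,w)\neq\emptyset$, i.e. if and only if $w\in\ES(x)$. Since $\calS_I=\pi^{-1}(\calS_g)$, a point of $\KR{x}$ lies in $\calS_I$ precisely when its image $A_0$ lies in $EO_w\cap\calS_3$ for some $w\in\ES(x)$. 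Therefore
\[
	\KR{x}\cap\calS_I\neq\emptyset\quad\Longleftrightarrow\quad\text{there is }w\in\ES(x)\text{ with }EO_w\cap\calS_3\neq\emptyset .
\]

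Next I pin down the possible $w$. Since $x\in\Admn$, the $p$-rank is constantly $0$ on $\KR{x}$, so every $A_0$ in the image $\pi(\KR{x})=\coprod_{w\in\ES(x)}EO_w$ has $p$-rank $0$; thus $\ES(x)\subseteq\Wfin^{(0)}=\{\id,s_3,s_{23},s_{323}\}$, and $\ES(x)\neq\emptyset$ because $\KR{x}$ is nonempty. By Theorem~\ref{EOandSSthm} we have $EO_{\id},EO_{s_3}\subset\calS_3$; in particular $\calS_3\supseteq EO_{\id}\neq\emptyset$, so the dense open subset $EO_{s_{323}}\cap\calS_3$ of $\calS_3$ is nonempty; and $EO_{s_{23}}\cap\calS_3=\emptyset$. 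Hence, among the four final elements of $p$-rank $0$, the stratum $EO_w$ is disjoint from $\calS_3$ exactly when $w=s_{23}$.

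Combining the last two observations, $\KR{x}\cap\calS_I=\emptyset$ holds if and only if every $w\in\ES(x)$ satisfies $EO_w\cap\calS_3=\emptyset$, i.e. if and only if $\ES(x)\subseteq\{s_{23}\}$; since $\ES(x)$ is nonempty this is equivalent to $\ES(x)=\{s_{23}\}$. The only genuinely non-formal input is Theorem~\ref{EOandSSthm}(\ref{SSands23}), that the $2$-dimensional EO stratum of $p$-rank $0$ avoids the supersingular locus, which has already been proved; everything else is bookkeeping with the definitions and with the table of Section~\ref{ESset}. So I do not expect a real obstacle here — the one point to keep in mind is that $\KR{x}\cap\calS_I$ is controlled purely fibrewise, hence only through $\ES(x)$, rather than through the finer geometry of $\KR{x}$ itself.
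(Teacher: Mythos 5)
Your argument is correct and is essentially the paper's own proof, which likewise combines the identity $\pi(\KR{x})=\coprod_{w\in\ES(x)}EO_w$ with Theorem~\ref{EOandSSthm} to see that, among the four $p$-rank~$0$ final elements, $s_{23}$ is the only one whose EO stratum misses $\calS_3$. The paper states this in one line; you have merely filled in the same bookkeeping explicitly.
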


\begin{proof}
	This follows from the results of Section \ref{EOandSS3}, using Equation 
	\ref{eovskrbasis}.
\end{proof}

\begin{remark}\label{KRvsDL}
	The relationship between the KR stratification and the supersingular locus 
	is closely related to the theory of affine Deligne-Lusztig varieties. In 
	\cite[Proposition 12.6]{haines}, Haines shows that for $x\in\Admn$ the nonemptiness of the 
	intersection $\KR{x}\cap \calS_I$ is equivalent to the nonemptiness of a 
	certain affine Deligne-Lusztig variety. In loc.cit. this result is stated 
	using $p$-adic Deligne-Lusztig varieties, but by \cite[Corollary 11.3.5]{affdl} 
	the non-emptiness of an affine Deligne-Lusztig variety is equivalent in the 
	function field and the $p$-adic case.
\end{remark}

We want to get a more precise statement about the intersection $\KR{x}\cap 
\calS_I$ in those cases where it is nonempty and not equal to $\KR{x}$. For this 
we need the following

\begin{prop}\label{DimProp}
	Let $f:X\to Y$ be a proper morphism of algebraic varieties over an 
	algebraically closed field $K$. Let $B\subset Y$ be a locally closed subset 
	equi-dimensional of dimension $d\in\mathbb{N}$. Let $A\subset X$ be a 
	locally closed subset with the property that there is a natural number 
	$e\in\mathbb{N}$ such that $f^{-1}(b)\cap A$ is
	irreducible of dimension $e$ and dense in $f^{-1}(b)\cap\overline{A}$ for 
	every $b\in B(K)$, where we denote by $\overline{A}$ the closure of $A$ in 
	$X$. Then $f^{-1}(B)\cap A$ is equi-dimensional of dimension 
	$d+e$. Furthermore the number of irreducible components of $f^{-1}(B)\cap A$ 
	is equal to the number of irreducible components of $B$.
\end{prop}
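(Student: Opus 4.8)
The plan is to reduce first to the situation where $A$ is open and dense in $X$, then to replace $X$ by $f^{-1}(B)$, and finally to deduce the general statement from the case where $B$ is irreducible, which will be handled by generic flatness.

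\emph{Reductions.} Since a locally closed subset is open in its closure, I would replace $X$ by the (reduced) closed subvariety $\overline{A}$ and $f$ by its restriction, which is still proper; then $A$ is open and dense in $X$, and the hypothesis reads that $f^{-1}(b)\cap A$ is irreducible of dimension $e$ (in particular nonempty) and dense in $f^{-1}(b)$ for every $b\in B(K)$. A dense irreducible subspace forces the ambient space to be irreducible, and $f^{-1}(b)\cap A$ is moreover open in $f^{-1}(b)$, so $f^{-1}(b)$ is itself irreducible of dimension $e$. Put $\calZ=f^{-1}(B)$, $g=f|_{\calZ}$ and $A'=f^{-1}(B)\cap A$. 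Then $A'$ is open in $\calZ$; it is also dense, because for each $b\in B(K)$ the closure of $A'$ contains that of $A'\cap f^{-1}(b)=A\cap f^{-1}(b)$, namely $f^{-1}(b)$, whence $\overline{A'}$ contains every $K$-point of $\calZ$, hence all of $\calZ$. An open dense subset has the same irreducible components, via a dimension-preserving bijection, as the ambient variety, so it suffices to prove the two assertions for $\calZ$, noting that $g\colon\calZ\to B$ is proper with all fibres $g^{-1}(b)$ ($b\in B(K)$) irreducible of dimension $e$.

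\emph{The case $B$ irreducible of dimension $d$.} By generic flatness there is a dense open $V\subseteq B$ over which $g$ is flat. A flat morphism of finite type over a field is open, and an open morphism with irreducible base and irreducible fibres has irreducible source, so $g^{-1}(V)$ is irreducible; flatness together with $\dim V=d$ and the constant fibre dimension $e$ gives $\dim g^{-1}(V)=d+e$. Let $\calZ_1=\overline{g^{-1}(V)}\subseteq\calZ$, an irreducible closed subvariety of dimension $d+e$ which, being proper over $B$ with image containing $V$, maps onto $B$. Apply the theorem on the dimension of fibres to the surjective morphism $g|_{\calZ_1}\colon\calZ_1\to B$ of irreducible varieties, whose generic fibre has dimension $\dim\calZ_1-\dim B=e$: every fibre $g^{-1}(b)\cap\calZ_1$ has dimension at least $e$, but it is contained in $g^{-1}(b)$, which is irreducible of dimension $e$, so $g^{-1}(b)\cap\calZ_1=g^{-1}(b)$, i.e. $g^{-1}(b)\subseteq\calZ_1$, for every $b\in B(K)$. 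As $\calZ$ is the closure of its $K$-points and each of these lies on some fibre $g^{-1}(b)$, we conclude $\calZ\subseteq\calZ_1$, hence $\calZ=\calZ_1$ is irreducible of dimension $d+e$.

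\emph{The general case.} Let $B_1,\dots,B_r$ be the irreducible components of $B$, each of dimension $d$, and set $\calZ_{(j)}=g^{-1}(B_j)$. By the previous step each $\calZ_{(j)}$ is irreducible of dimension $d+e$, and $\calZ=\bigcup_j\calZ_{(j)}$. If $\calZ_{(j)}\subseteq W$ with $W$ irreducible, then $g(W)$ is an irreducible closed subset containing the component $B_j$, hence equal to $B_j$, so $W\subseteq g^{-1}(B_j)=\calZ_{(j)}$; thus $\calZ_{(j)}$ is a maximal irreducible closed subset of $\calZ$, and conversely any irreducible component of $\calZ$ has irreducible image contained in some $B_j$, hence lies in $\calZ_{(j)}$ and equals it. Since $\calZ_{(j)}\neq\calZ_{(j')}$ for $j\neq j'$, the space $\calZ$ has exactly $r$ irreducible components, all of dimension $d+e$; the same then holds for the open dense subset $A'=f^{-1}(B)\cap A$. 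The main obstacle is exactly the transition from the irreducible-base case to the general one, carried out correctly: a proper morphism can a priori acquire components lying over lower-dimensional subvarieties of the base, and it is the constancy of the fibre dimension — exploited twice above through the dimension-of-fibres theorem — that rules this out and makes both the dimension count and the count of components come out as claimed.
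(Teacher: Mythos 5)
Your proof is correct, and at the top level it follows the same skeleton as the paper's: reduce to the situation where $A$ is open and dense in $X$ (so that the closed fibres over $B(K)$ are themselves irreducible of dimension $e$) and where the base is irreducible, at which point everything hinges on the lemma that a proper morphism onto an irreducible $d$-dimensional variety, all of whose closed fibres are irreducible of dimension $e$, has irreducible source of dimension $d+e$; the component count in the general case then falls out by matching components of $f^{-1}(B)$ with components of $B$ exactly as you do. Where you genuinely diverge is in the proof of that lemma. The paper argues by contradiction: writing the source as $C_1\cup C_2$ with $C_i$ closed, it invokes Chevalley's upper semicontinuity of fibre dimension to see that one of the closed loci $\{y\mid \dim(f^{-1}(y)\cap C_i)\geq e\}$ must be the whole base, whence that $C_i$ contains every irreducible $e$-dimensional fibre and so is everything; the dimension formula is then quoted from EGA. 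You instead build the irreducible set from below: generic flatness gives a dense open $V\subseteq B$ over which the map is flat, hence open, hence with irreducible ($(d+e)$-dimensional) preimage, and the lower bound on fibre dimensions for the dominant proper map $\overline{g^{-1}(V)}\to B$ forces that closure to absorb every closed fibre and therefore to be all of $g^{-1}(B)$. Both arguments rest on the same standard fibre-dimension machinery; yours exhibits the irreducible component constructively and gets the dimension from flatness rather than from a separate citation, and you are more explicit than the paper about the initial reductions and the final bookkeeping of components, which the paper compresses into ``we immediately reduce.''
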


\begin{proof}
	We immediately reduce to the case $B=Y$ and $A=X$. We may also assume $Y$ to 
	be irreducible. Hence we are reduced to the statement of the following 
	Lemma whose proof we include for lack of reference.
\end{proof}

\begin{lem}
	Let $f:X\to Y$ be proper morphism of algebraic varieties over an 
	algebraically closed field $K$. Assume that $Y$ is irreducible of 
	dimension $d\in\mathbb{N}$ and that the fiber $f^{-1}(y)$ is irreducible of dimension 
	$e\in\mathbb{N}$ for every $y\in Y(K)$. Then $X$ is irreducible of dimension 
	$d+e$.
\end{lem}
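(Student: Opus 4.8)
The plan is to reduce to affine pieces of the base and then use a standard dimension/irreducibility principle for fibrations. First I would choose a nonempty open affine $U\subset Y$. Since $Y$ is irreducible of dimension $d$, the restriction $f^{-1}(U)\to U$ is again proper with all fibers irreducible of dimension $e$, and $\overline{f^{-1}(U)}=f^{-1}(Y)$ whenever $f^{-1}(U)$ meets every component of $X$, so it suffices to prove the statement for this restriction; hence we may assume $Y$ affine and even, after shrinking, that the situation is as nice as we like (e.g.\ $Y$ normal by restricting to the smooth locus, which is open dense). The key classical input is: for a dominant morphism of irreducible varieties, the dimension of a general fiber equals $\dim X-\dim Y$, and \emph{every} fiber has dimension $\geq \dim X - \dim Y$; combined with our hypothesis that every fiber has dimension exactly $e$, this forces $\dim X = d+e$ once we know $X$ is irreducible.

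For irreducibility of $X$ I would argue as follows. Let $X=\bigcup_{k=1}^r X_k$ be the decomposition into irreducible components, each with its reduced structure; since $f$ is proper, each $f(X_k)$ is a closed irreducible subset of $Y$. Suppose for contradiction that some $f(X_k)$ is a proper closed subset of $Y$. Pick $y\in Y(K)$ lying outside $\bigcup_{k:\,f(X_k)\neq Y} f(X_k)$, which is possible because $Y$ is irreducible and this is a finite union of proper closed subsets. Then the fiber $f^{-1}(y)=\bigcup_{k:\,f(X_k)=Y}\bigl(X_k\cap f^{-1}(y)\bigr)$ is a union of the fibers of those components that dominate $Y$. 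For each such component $X_k$, the restriction $f|_{X_k}:X_k\to Y$ is dominant between irreducible varieties, so by upper semicontinuity of fiber dimension the locus of $y$ where $\dim\bigl(X_k\cap f^{-1}(y)\bigr) = \dim X_k - d$ contains a nonempty open subset of $Y$; shrinking $y$ into the intersection of these opens (finitely many, one per dominating component) and into the open where it avoids the images of the non-dominating components, we get that $f^{-1}(y)$ is a union of the irreducible sets $X_k\cap f^{-1}(y)$, each of dimension $\dim X_k - d$. Since $f^{-1}(y)$ is irreducible of dimension $e$ by hypothesis, exactly one $X_k$ can dominate $Y$ (two distinct ones would give a reducible fiber, as distinct components meet a general fiber in distinct irreducible subsets of the same maximal dimension only if they coincide there, which they do not), and that single $X_k$ satisfies $\dim X_k - d = e$.

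It remains to rule out the existence of a component $X_k$ with $f(X_k)$ a proper closed subset $Z\subsetneq Y$. On such a component, every point of $Z$ has a fiber, so $f^{-1}(z)\cap X_k$ is nonempty for $z\in Z$; but $f^{-1}(z)$ is irreducible and equals the (necessarily dense) image of $f^{-1}(z)\cap A$ with $A=X$, which is $f^{-1}(z)$ itself, and this set is already covered by the unique dominating component's fiber, which has dimension $e$. If $X_k\not\subset X_{\text{dom}}$, then $X_k\cap f^{-1}(z)$ is not contained in $X_{\text{dom}}\cap f^{-1}(z)$ for $z$ in a dense subset of $Z$ — otherwise $X_k\subset X_{\text{dom}}$, contradicting that $X_k$ is a distinct component — so $f^{-1}(z)$ would be reducible for such $z$, again contradicting the hypothesis. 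Hence $r=1$ and $X$ is irreducible of dimension $d+e$, which is exactly the assertion; the final sentence of Proposition~\ref{DimProp} about the number of components then follows by applying this conclusion componentwise to the original $B$. The main obstacle is the bookkeeping in the irreducibility argument: making precise that a component failing to dominate $Y$, or a second component dominating $Y$, produces a reducible general fiber, which requires the semicontinuity of fiber dimension together with the observation that the generic fiber of $f|_{X_k}$ is equidimensional of the expected dimension.
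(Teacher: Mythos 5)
Your overall strategy is the same circle of ideas as the paper's: decompose $X$ into closed pieces, use semicontinuity of fibre dimension, and exploit that a closed subset of dimension $e$ inside the irreducible $e$-dimensional fibre must be the whole fibre. But as written there is a genuine gap at the decisive last step, the one ruling out a component $X_k$ with $f(X_k)=Z\subsetneq Y$. You claim that from ``$X_k\cap f^{-1}(z)\not\subseteq X_{\mathrm{dom}}\cap f^{-1}(z)$'' it follows that $f^{-1}(z)$ is reducible. It does not: $f^{-1}(z)$ is a finite union of the closed sets $X_j\cap f^{-1}(z)$, and irreducibility only forces \emph{one} of them to equal $f^{-1}(z)$; nothing you have established prevents that one from being $X_k\cap f^{-1}(z)$ itself, in which case $f^{-1}(z)\subseteq X_k$ and there is no contradiction. (The preceding sentence about ``the image of $f^{-1}(z)\cap A$ with $A=X$'' imports hypotheses of the Proposition that are not part of the Lemma and proves nothing here.) What actually closes the argument is the lower bound you quote in your first paragraph but never deploy at this point: $X_{\mathrm{dom}}\to Y$ is dominant and proper, hence surjective, and every nonempty fibre of a dominant morphism of irreducible varieties has dimension $\geq \dim X_{\mathrm{dom}}-d=e$. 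Hence $X_{\mathrm{dom}}\cap f^{-1}(z)$ is a nonempty closed subset of dimension $\geq e$ of the irreducible $e$-dimensional $f^{-1}(z)$, so it equals $f^{-1}(z)$. Thus every fibre, and therefore every closed point of $X$, lies in $X_{\mathrm{dom}}$, i.e.\ $X=X_{\mathrm{dom}}$.

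Note that this one observation disposes of non-dominating components and of a second dominating component simultaneously, which makes your generic-point analysis superfluous, along with the opening reduction to affine $Y$ (which is itself unjustified: a component mapping into $Y\setminus U$ would be lost, and you cannot yet exclude such components) and the shaky claim that two dominating components force a reducible \emph{general} fibre (the sets $X_k\cap f^{-1}(y)$ need not be irreducible, so ``distinct irreducible subsets of the same maximal dimension'' is not what you have; the correct repair again goes through the dimension bound via $X_{k_1}\cap X_{k_2}$). The paper's proof is exactly the streamlined version: write $X=C_1\cup C_2$, use Chevalley semicontinuity to see that the closed sets $F_i=\{y:\dim(C_i\cap f^{-1}(y))\geq e\}$ cover $Y$, conclude $F_1=Y$ by irreducibility, and then the same ``closed, dimension $\geq e$, inside an irreducible fibre of dimension $e$'' argument gives $C_1=X$. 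I recommend rewriting your proof around that single mechanism.
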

\begin{proof}
	Let $X=C_1\cup C_2$ with closed subsets $C_1,C_2\subset X$. Let $f_1$ and 
	$f_2$ denote the restrictions of $f$ to $C_1$ and $C_2$ respectively. By a 
	Corollary to Chevalley's Theorem, see \cite[13.1.5]{EGA}, the sets 
	$F_i=\{y\in Y\pipe \dim f_i^{-1}(y)\geq e\}$ are closed subsets of $Y$, 
	$i=1,2$. For $y\in Y(K)$ the $e$-dimensional fiber $f^{-1}(y)$ is the union 
	of the closed subsets $f_1^{-1}(y)$ and $f_2^{-1}(y)$ and hence $y\in 
	F_1\cup F_2$. As the set $Y(K)$ is dense in $Y$ this implies that $Y=F_1\cup 
	F_2$ and by the irreducibility of $Y$ we may assume that $F_1=Y$. Let $x\in 
	X(K)$ be a closed point with image $y=f(x)\in Y(K)$. Then $\dim 
	f_1^{-1}(y)=\dim f^{-1}(y)\cap C_1=e$ and as $f^{-1}(y)\cap C_1$ is a closed 
	subset of $f^{-1}(y)$ and the latter is irreducible of dimension $e$, we get 
	$f^{-1}(y)\cap C_1=f^{-1}(y)$ and hence $C_1\supset f^{-1}(y)\ni x$. This 
	implies that $C_1=X$ and we see that $X$ is irreducible. Furthermore the 
	closed subset $\{y\in Y\pipe \dim f^{-1}(y)\geq e+1\}$ of $Y$ does not 
	contain a point of $Y(K)$, hence it is empty. This means that $\dim 
	f^{-1}(y)=e$ for all $y\in Y$. We can now apply \cite[10.6.1(iii)]{EGA} to 
	get the result.
\end{proof}

We are now ready to determine the dimension of the intersection $\KR{x}\cap 
\calS_I$ for those $x\in \Admn$ with $\KR{x}\nsubseteq \calS_I$. It is clear a 
priori that for any such $x$ we have $\dim\KR{x}\cap \calS_I\leq \dim\KR{x}-1=
\ell(x)-1$ as this intersection is a proper closed subset of the 
irreducible space $\KR{x}$, see Proposition \ref{KRprop}(\ref{krirred}). The following table 
shows that this inequality is in fact an equality for $g=3$. 

\begin{center}
	\begin{tabular}{|l||c|c|}
		\hline
		$x$&$\dim \KR{x}\cap \calS_I$&equi-dimensional?\\
		\hline
		$s_{310}\tau,\ s_{320}\tau$&2&?\\
		\hline
		$s_{3120}\tau$&3&?\\
		\hline
		$s_{2301}\tau$&3&?\\
		\hline
		$s_{120}\tau, s_{312}\tau,s_{201}\tau, s_{231}\tau$&2&$\surd$\\
		\hline
		$s_{010}\tau, s_{323}\tau, s_{301}\tau, s_{230}\tau$&2&$\surd$\\
		\hline
		$s_{3010}\tau, s_{3230}\tau$&3&$\surd$\\
		\hline
	\end{tabular}
\end{center}

\subsection{Proof}

For $x\in\{s_{120}\tau, s_{312}\tau,s_{201}\tau, s_{231}\tau\}$ we apply 
Proposition \ref{DimProp} for $\pi$ with $B=EO_{s_3}$ and $A=\KR{x}$. It is 
clear from the results of Section \ref{DimII} that the conditions on the fibers 
(appearing in Proposition \ref{DimProp}) are indeed satisfied. For example we have 
\begin{equation*}
	\overline{\KR{s_{120}\tau}}=\KR{s_{120}\tau}\cup\KR{s_{12}\tau}\cup\KR{s_{20}\tau}\cup\KR{s_{10}\tau}\cup\KR{s_{1}\tau}\cup\KR{s_{2}\tau}\cup\KR{s_{0}\tau}\cup\KR{\tau},
\end{equation*}
hence we see from Section \ref{DimII} that for $b\in EO_{s_3}(\fk)$ we have
\begin{equation*}
	\overline{\KR{s_{120}\tau}}\cap\pi^{-1}(b)=\KR{s_{120}\tau}\cap\pi^{-1}(b)\cup\KR{s_0\tau}\cap\pi^{-1}(b).
\end{equation*}
By the results of Section \ref{DimII} we see that
\begin{center}
	\setlength{\extrarowheight}{4pt}
	\begin{tabular}{|c|c|}
		\hline
		$\iota_b(\pi^{-1}(b)\cap\KR{s_{120}\tau})$&$\iota_b(\pi^{-1}(b)\cap\overline{\KR{s_{120}\tau}})$\\
		\hline
		\setlength{\extrarowheight}{0pt}
		$\begin{pmatrix}
			0&0&0\\
			0&0&0\\
			0&0&0\\
			0&0&1\\
			1&0&0\\
			c&1&0
		\end{pmatrix}$&
		\setlength{\extrarowheight}{0pt}
		$\begin{pmatrix}
			0&0&0\\
			0&0&0\\
			0&0&0\\
			0&0&1\\
			1&0&0\\
			c&1&0
		\end{pmatrix}\vee
		\begin{pmatrix}
			0&0&0\\
			0&0&0\\
			0&0&0\\
			0&0&1\\
			0&1&0\\
			1&0&0
		\end{pmatrix}$\\
		\hline
		\multicolumn{2}{|c|}{$c\in\fk$}\\
		\hline
	\end{tabular}
\end{center}
Hence $\pi^{-1}(b)\cap\KR{s_{120}\tau}$ is irreducible of dimension 1 and dense 
in $\pi^{-1}(b)\cap\overline{\KR{s_{120}\tau}}$. As $\pi^{-1}(EO_{s_3})\cap 
\KR{x}=\calS_I\cap\KR{x}$ by the results of Section \ref{EOandSS3} and the table 
in Section \ref{ESset}, our claim follows.

For $x\in\{s_{010}\tau, s_{323}\tau, s_{301}\tau, s_{230}\tau,s_{3010}\tau, 
s_{3230}\tau\}$ we apply Proposition \ref{DimProp} for $\pi$ with $B=\calS_3\cap 
EO_{s_{323}}$ and $A=\KR{x}$. It follows from Section \ref{DimIV} that the conditions on 
the fibers are indeed satisfied and we have 
$\pi^{-1}(EO_{s_{323}}\cap\calS_3)\cap\KR{x}=\calS_I\cap\KR{x}$ by the results 
of Section \ref{EOandSS3} and the table in Section \ref{ESset}.

Furthermore we have 
$\pi^{-1}(EO_{s_3})\cap\KR{s_{3120}\tau}\subset\calS_I\cap\KR{s_{3120}\tau}$ and
$\pi^{-1}(EO_{s_{323}}\cap\calS_3)\cap\KR{s_{2301}\tau}\subset\calS_I\cap\KR{s_{2301}\tau}$ and we use 
Proposition \ref{DimProp} and the results of Section \ref{DimII} and 
\ref{DimIV}, respectively, to see that these subsets have dimension $3$. 

Finally let $A\in EO_\id(\fk)$. Then $\pi^{-1}(A)\cap \KR{s_{310}\tau}\subset 
\calS_I\cap \KR{s_{310}\tau}$ and $\pi^{-1}(A)\cap \KR{s_{320}\tau}\subset 
\calS_I\cap \KR{s_{320}\tau}$. But these subsets have dimension at least 2
because $\dim\calK({s_{310}\tau})=\dim\calK({s_{320}\tau})=2$ (see Section 
\ref{DimI}).
\begin{remark}
	If $g$ is even it is shown in \cite[Proposition 8.12]{SSLIW} that every 
	top-dimensional irreducible component of $\calS_I$ is an irreducible 
	component of the left hand side of Equation \ref{contained}. Looking at the 
	table above we see that the corresponding statement is not true for $g=3$, as 
	$\dim S_I=3$ in this case.
\end{remark}

\begin{remark}
	It is strongly expected that the relationship mentioned in Remark 
	\ref{KRvsDL} extends to other properties of the intersection $\KR{x}\cap 
	\calS_I$, $x\in\Admn$. In particular strong evidence suggests that $\KR{x}\cap 
	\calS_I$ is equi-dimensional of dimension $n$ if and only if the 
	corresponding affine Deligne-Lusztig variety (in the function field case) is 
	equi-dimensional of dimension $n$, $n\in\mathbb{N}$. In \cite{He}, Goertz 
	and He explain a reduction method for affine Deligne-Lusztig varieties over 
	function fields which is completely analogous to the classical reduction 
	method by Deligne and Lusztig. Using this reduction method one sees that the 
	affine Deligne-Lusztig varieties corresponding to the intersections $\KR{x}\cap 
	\calS_I$ for $g=3$ and $x\in\{s_{310}\tau, s_{320}\tau, s_{3120}\tau, 
	s_{2301}\tau\}$ are equi-dimensional. Hence we expect that the 
	same is true for the intersections themselves.
\end{remark}

\providecommand{\bysame}{\leavevmode\hbox to3em{\hrulefill}\thinspace}
\providecommand{\MR}{\relax\ifhmode\unskip\space\fi MR }
\providecommand{\MRhref}[2]{%
  \href{http://www.ams.org/mathscinet-getitem?mr=#1}{#2}
}
\providecommand{\href}[2]{#2}

\end{document}